\def\R{{{\mathbb R}}}
\def\S{{{\mathbb S}}}
\def\normal{{{\vec{n}}}}
\def\d{{{\partial}}}
\def\tr{{{\mathrm{tr}}}}
\newcommand{\inprod}[2]{\left\langle #1 \, , \, #2 \right\rangle}
\newcommand{\eqdef}{\stackrel{{\rm{def}}}{=}}
\newcommand\diam{{\mathrm{diam}}}
\newcommand\dist{{\mathrm{dist}}}
\newtheorem{theorem}{Theorem}[section]
\newtheorem{lemma}[theorem]{Lemma}
\newtheorem{proposition}[theorem]{Proposition}
\newtheorem{corollary}[theorem]{Corollary}
\newtheorem{example}[theorem]{Example}
\newtheorem{condition}[theorem]{Condition}
\newtheorem{remark}[theorem]{Remark}
\numberwithin{equation}{section}
\title{Boundary Estimates for Solutions of the Monge-Amp\`ere Equation Satisfying Dirichlet-Neumann Type Conditions in Annular Domains}
\author{Tim Espin}
\author{Aram Karakhanyan}
\address{School of Mathematics
University of Edinburgh
James Clerk Maxwell Building
The King's Buildings
Peter Guthrie Tait Road
EDINBURGH
EH9 3FD}
\email{Tim.Espin@ed.ac.uk}
\email{aram6k@gmail.com}
\thanks{Tim Espin was supported by The Maxwell Institute Graduate School in Analysis and its Applications, a Centre for Doctoral Training funded by the UK Engineering and Physical Sciences Research Council (grant EP/L016508/01), the Scottish Funding Council, Heriot-Watt University and the University of Edinburgh.}
\thanks{AK was supported by  EPSRC grant 
EP/S03157X/1.}
\thanks{2010 Mathematics Subject Classification: 35J96, 35K96}
\begin{document}

\begin{abstract}
	We consider smooth solutions of the Monge-Amp\`ere equation on an annular domain, whose boundary consists of two smooth, closed, strictly convex hypersurfaces, subject to mixed boundary conditions. In particular we impose a homogeneous Dirichlet condition on the outer boundary and a Neumann condition on the inner boundary. We demonstrate that in general, global $C^2$ estimates cannot be obtained unless we impose extra restrictions on the principal curvatures of the inner boundary and on the Neumann condition itself. The latter is illustrated by the construction of an explicit counterexample. Under these conditions, we prove a priori $C^2$ estimates and show that our problem admits a smooth solution.
\end{abstract}

\maketitle

\section{Introduction}

In this article we study a series of Monge-Amp\`ere-type equations satisfying mixed Dirichlet-Neumann boundary conditions on annular domains. Let $\Omega\subset\R^n$ be an annular domain such that $\partial \Omega = \Gamma^+\cup \Gamma^-$, where 
$\Gamma^+$ and $\Gamma^-$ are the exterior and interior components of the boundary respectively. The homogeneous Dirichlet data is prescribed on $\Gamma^+$ whereas on  $\Gamma^-$, $u$ satisfies a Neumann-type condition. Throughout, we assume $\Gamma^+$ and $\Gamma^-$ are disjoint, smooth, closed, strictly convex hypersurfaces enclosing the origin.

The Monge-Amp\`ere equation is one of the central equations 
in nonlinear PDEs, see \cite{TW-Scuola}. Existence of solutions for the Dirichlet problem on a smooth, strictly convex domain with smooth boundary data was established by Caffarelli, Nirenberg and Spruck in \cite{CNS-1} under the assumption that the problem admits a convex subsolution. The Dirichlet problem on more general domains was dealt with by Guan and Spruck in \cite{GuanSpruck,Guan1998}. In \cite{LTU}, Lions, Trudinger and Urbas studied the Neumann problem for the Monge-Amp\`ere equation on a convex domain and proved boundary estimates under very general conditions. Building on this, in \cite{Urbas-95} and \cite{Urbas-98}, Urbas examined the oblique derivative problem, which is more general than the Neumann problem. See also \cite{Ma-19} for more recent results on related Hessian equations.


\subsection{The Basic Monge-Amp\`ere Equation}

We first investigate the following Monge-Amp\`{e}re equation on an annular domain $\Omega \subset \R^n$:
\begin{equation}\label{TheEquation}
\begin{cases}
\det[D^2 u] = \psi^n (x)  & \text{ in } \Omega \, , \\
u = 0  & \text{ on } \Gamma^+ \, , \\
u_\nu = \gamma_0 u + \phi(x) & \text{ on } \Gamma^- \, ,
\end{cases}
\end{equation}
where $\nu$ is the inward-pointing (into $\Omega$) unit normal vector field on $\Gamma^-$. We assume that $\gamma_0 \geq 0$ is a given constant, and $\psi(x) > 0$ and $\phi(x)$ are smooth functions on $\Omega$ and $\Gamma^-$ respectively.

To establish boundary $C^2$ estimates for the Dirichlet problem for the Monge-Amp\`ere equation one has to bound the mixed tangential-normal derivatives on the boundary, see \cite{CNS-1} or Chapter 17 in \cite{GilbargTrudinger}. 
For the Neumann problem, these follow from the Neumann condition on the boundary.

Here, we consider the problem with both Dirichlet and Neumann conditions. 	
The main issue is to estimate the second order tangential and normal derivatives on $\Gamma^-$.
This is usually done  by assuming $\gamma_0$ to be a large constant, as in \cite{LTU}. 	
It was shown in \cite{AK} that there exists a weak solution of \eqref{TheEquation} when $\Gamma^-$ is ``free" and, in addition to $u_\nu = \gamma_0 u + \phi$,  $u$ is also constant on $\Gamma^-$.
However, we show here that for general $\Omega$ (when $\Gamma^-$ is fixed) the solution of \eqref{TheEquation} may not exist even when $\Omega$ is radially symmetric and $u$ is constant on $\Gamma^-$. See Section \ref{sec-explicit} for details.

However, if a solution exists then it is interesting to know  whether one can establish uniform estimates for smooth solutions. It turns out that second derivative estimates on $\Gamma^-$ can be obtained only for the double tangential derivatives, provided that the principal curvatures of $\Gamma^-$ are not too large. In order to bound the double normal derivative, one must additionally require the existence of a subsolution whose normal derivative on $\Gamma^-$ satisfies a particular second-order ODE. This assumption is not too unreasonable: indeed, \cite{CNS-1,GuanSpruck,Guan1998} all assume the existence of a subsolution to obtain estimates for the Dirichlet problem.

In this direction, our main results and conditions are the following:

\begin{theorem}[$C^0$ and $C^1$ Boundedness of Solutions]\label{main-th-C01}
	Suppose $u\in C^2(\Omega)\cap C^1(\overline{\Omega})$ is a strictly convex solution of \eqref{TheEquation} in the annular domain $\Omega$, such that $\Gamma^\pm$ are smooth, closed, strictly convex hypersurfaces.
	Then there are constants $C_0,C_1$ depending on $n, \Omega, \psi$ and $\phi$ such that \[
	\sup_{\overline{\Omega}} |u| \leq C_0  \, , \quad \|u\|_{C^1(\overline{\Omega})}\leq C_1 \, .
	\]
\end{theorem}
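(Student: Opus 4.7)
The upper bound $u\leq 0$ on $\overline{\Omega}$ follows directly from convexity. For any $x\in\overline{\Omega}$, the strict convexity of $\Gamma^-$ lets me choose a line through $x$ (a tangent line to $\Gamma^-$) that avoids the open convex region enclosed by $\Gamma^-$; this line therefore meets $\Gamma^+$ in two points $y_1,y_2$ with $x\in [y_1,y_2]\subset\overline{\Omega}$, and convexity of $u$ along this chord combined with $u|_{\Gamma^+}=0$ forces $u(x)\leq 0$.

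For the lower bound, I analyze where the minimum $m:=\min_{\overline{\Omega}}u$ is attained. If $m$ is attained on $\Gamma^-$ at some point $x_0$, the Hopf boundary-point lemma gives $u_\nu(x_0)\geq 0$ (with $\nu$ pointing into $\Omega$), so the Neumann condition yields $\gamma_0 m\geq -\phi(x_0)$ and thus $m\geq -\|\phi\|_{L^\infty}/\gamma_0$ when $\gamma_0>0$; the case $\gamma_0=0$ is handled separately by combining Hopf with an auxiliary barrier. If $m$ is attained at an interior point $x_0$, then $Du(x_0)=0$, and I compare $u$ with the convex cone $K(x):=m+(|m|/R)|x-x_0|$, where $R\geq \max_{y\in\Gamma^+}|y-x_0|$, so that $K\leq 0=u$ on $\Gamma^+$ and $K(x_0)=u(x_0)$. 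Passing to the convex envelope of $u|_{\overline{\Omega}}$ on the convex hull of $\Gamma^+$ and applying an Alexandrov-type estimate, I bound $|m|^n$ by a constant times $\int_{\Omega}\det D^2u=\int_{\Omega}\psi^n$.

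Given the $C^0$ bound, the $C^1$ bound follows from standard convex-function techniques. Interior gradient bounds come from the inequality $|Du(x)|\leq \|u\|_{C^0(\overline{\Omega})}/\dist(x,\partial\Omega)$. On $\Gamma^+$, a classical barrier argument --- comparing $u$ with a convex function vanishing on $\Gamma^+$ whose determinant dominates $\psi^n$ --- controls the outward normal derivative from one side, while convexity and $u|_{\Gamma^+}=0$ control the other. On $\Gamma^-$, the Neumann condition together with the $C^0$ bound directly bounds $u_\nu$, and the tangential derivatives are controlled by the global convex-function gradient estimate. The principal obstacle I anticipate is the interior-minimum case of the lower bound: the standard Alexandrov estimate is tailored to Dirichlet data on the whole boundary, so the mixed data force me to extend $u$ to the convex hull of $\Gamma^+$ via a convex-envelope construction and to compare Monge-Amp\`ere masses carefully.
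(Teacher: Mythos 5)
Your upper bound $u\leq 0$ and the boundary-minimum case are fine, but the interior-minimum case of the $C^0$ bound --- which you yourself flag as the principal obstacle --- contains a genuine gap, and it is exactly where your route diverges from what can be justified. When you pass to the convex envelope $\hat u$ of $u|_{\overline\Omega}$ on the convex hull of $\Gamma^+$ and run the cone/Alexandrov comparison, the slopes you need ($|p|<|m|/R$) are slopes of planes that can be lifted until they touch $u$ somewhere in $\overline\Omega$; touching on $\Gamma^+$ is excluded by the usual computation, and touching at an interior point gives $p\in Du(\Omega)$, but nothing prevents the contact point from lying on $\Gamma^-$, where no Dirichlet data is prescribed. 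At a contact point $z\in\Gamma^-$ the admissible slopes form a ray $\{(Du)_{\mathrm{tan}}(z)+s\,\nu(z): s\leq u_\nu(z)\}$, and the union of these rays over $\Gamma^-$ can carry positive $n$-dimensional measure that is \emph{not} dominated by $\int_\Omega\det D^2u=\int_\Omega\psi^n$. So the inclusion of $B_{|m|/R}(0)$ into $Du(\Omega)$, and hence the bound $|m|^n\lesssim\int_\Omega\psi^n$, does not follow; the Neumann condition only excludes such contacts under extra hypotheses (roughly $\gamma_0\,\dist(\Gamma^-,\Gamma^+)$ large), not in general. The paper avoids this issue entirely: in Proposition \ref{C0Estimate} the interior case is handled by the maximum principle applied to $v=\ln(u^2)+K|x|^2$, using only the trace inequality \eqref{C0BoundOnTrace}, with no extension of $u$ across the hole and no Alexandrov estimate.

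A second, smaller defect: your boundary-minimum bound $m\geq-\|\phi\|_\infty/\gamma_0$ degenerates as $\gamma_0\to 0$, and you defer $\gamma_0=0$ to an unspecified auxiliary barrier; likewise your bound for $u_\nu$ on $\Gamma^-$ via the Neumann condition introduces a $\gamma_0 C_0$ term. Theorem \ref{main-th-C01} asserts constants depending only on $n,\Omega,\psi,\phi$, the problem allows $\gamma_0\geq 0$, and Remark \ref{Constants} (independence of $C_0,C_1$ from $\gamma_0$) is used later in the tangential $C^2$ estimate, so these dependences are not harmless. In the paper the term $2K\inprod{x_0}{\nu}\geq 2Km_0>0$ coming from the $K|x|^2$ part of the auxiliary function (with $\Gamma^-$ strictly convex about the origin) plays the role you assign to $\gamma_0$, which is what makes the estimate uniform down to $\gamma_0=0$; similarly the paper's gradient bound never invokes the Neumann condition, since the maximum of $|Du|$ is captured on $\Gamma^+$ by the barrier $u-K\rho$. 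To repair your argument you would need either to rule out small-slope supporting planes touching only $\Gamma^-$ (which the data do not allow in general) or to replace the envelope/Alexandrov step by a maximum-principle argument of the paper's type.
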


\begin{condition}[Curvature Condition on $\Gamma^-$]\label{CurvatureCondition}
	Let $\xi$ be a unit vector tangential to $\Gamma^-$. Suppose that $\gamma_0 > 0$ and the normal curvature $\kappa_{\xi}$ in the direction $\xi$ is bounded above such that
	\begin{equation*}
	2\kappa_\xi < \gamma_0 + \max \left\{ 0 , \min_{\Gamma^-}\frac{\gamma_0 u + \phi}{M-u} \right\} \, ,
	\end{equation*}
	for some (large) constant $M$ depending on $n, \Omega, \psi, \phi$ and $C_1$.
\end{condition}

\begin{theorem}[Boundedness of Double Tangential Derivatives on $\Gamma^-$ under the Curvature Condition]\label{main-th-tan}
	Suppose $u\in C^4(\Omega)\cap C^3(\overline{\Omega})$ is a strictly convex solution of \eqref{TheEquation}, and let $\Omega$ and $\Gamma^\pm$ be as in Theorem \ref{main-th-C01}. Suppose that Condition \ref{CurvatureCondition} holds for all tangential unit vectors $\xi$ at each point on $\Gamma^-$. Then \[
	\sup_{\Gamma^-} |D_{\xi\xi} u| \leq C_2 \, ,
	\]
	where $D_{\xi\xi}u$ is the second derivative of $u$ in the direction $\xi$ and $C_2$ depends on $M$ and $\gamma_0$.
\end{theorem}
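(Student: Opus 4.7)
The plan is to prove this by a boundary maximum principle argument. I would set
\[
\mathcal{M}_0 := \sup\bigl\{u_{ij}(x)\xi^i\xi^j : x \in \Gamma^-,\ \xi \in T_x\Gamma^-,\ |\xi|=1\bigr\},
\]
which is attained by compactness at some $(x_0,\xi_0)$, and derive the required bound on $\mathcal{M}_0$ by a computation at $x_0$. Choose local coordinates at $x_0$ with $\nu(x_0)=e_n$, $\xi_0=e_1$, and $\Gamma^-$ locally the graph $\{x_n = g(x')\}$, $g(0)=0$, $\nabla g(0)=0$; after a further tangential rotation we may assume $g_{\alpha\beta}(0) = -\kappa_\alpha \delta_{\alpha\beta}$ for $\alpha,\beta<n$, so $\kappa_1 = \kappa_{\xi_0}$. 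Maximality of $(x_0,\xi_0)$ yields $u_{11}(x_0) = \mathcal{M}_0 \ge u_{\alpha\alpha}(x_0)$ and $u_{1\alpha}(x_0) = 0$ for $1 \neq \alpha < n$.

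The core step is to differentiate the boundary condition $u_\nu - \gamma_0 u - \phi = 0$ twice in the tangential direction $e_1$. Parametrising $\Gamma^-$ as $(x',g(x'))$, the boundary condition becomes $u_n - g_\beta u_\beta = \sqrt{1+|\nabla g|^2}(\gamma_0 u + \phi)$, and applying $\partial_{x_1}^2$ at $x'=0$ — where $g$ and $\nabla g$ vanish and the second fundamental form is diagonal — produces, after collecting terms,
\[
(\gamma_0 - 2\kappa_1)\,\mathcal{M}_0 \;=\; u_{11n}(x_0) - \kappa_1 u_{nn}(x_0) + R_0,
\]
with $|R_0|$ bounded by a constant depending on $n$, $C_1$, $\|\phi\|_{C^2}$, and the $C^3$-geometry of $\Gamma^-$.

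The remaining task is to control the two higher-derivative terms on the right. For $u_{nn}(x_0)$, I would invoke the Monge-Amp\`ere equation: the nearly diagonal tangential Hessian at $x_0$, with top eigenvalue $\mathcal{M}_0$ and $u_{1\alpha}(x_0) = 0$ for $1<\alpha<n$, together with $\det D^2 u(x_0) = \psi^n(x_0)$, forces $u_{nn}(x_0) = O(\mathcal{M}_0^{-(n-1)})$, so $\kappa_1 u_{nn}(x_0)$ is absorbed into the bounded remainder once $\mathcal{M}_0$ is large. For the mixed triple derivative $u_{11n}(x_0)$, I would apply the boundary maximum principle to a weighted auxiliary function
\[
F(x) := (M - u(x))^{\alpha}\, u_{ij}(x)\,\tilde\xi^i(x)\,\tilde\xi^j(x), \qquad x \in \Gamma^-,
\]
where $\tilde\xi$ is a smooth unit tangential extension of $\xi_0$ (e.g.\ the normalised tangential projection of $e_1$) and $\alpha > 0$ is chosen later. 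The weight $(M-u)^\alpha$ introduces precisely the quotient $(\gamma_0 u + \phi)/(M - u) = u_\nu/(M-u)$ into the resulting inequality, matching the slack appearing in Condition \ref{CurvatureCondition}. The first-order max condition at $x_0$ (together with the analogous conditions in the remaining tangential directions), combined with the Monge-Amp\`ere equation differentiated once in $x_1$, then supplies the one-sided bound on $u_{11n}(x_0)$ that is needed; Condition \ref{CurvatureCondition} guarantees strict positivity of the effective coefficient of $\mathcal{M}_0$ on the left, and dividing through yields $\mathcal{M}_0 \le C_2$.

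\textbf{Main obstacle.} The delicate point is bounding the mixed triple derivative $u_{11n}(x_0)$ in a manner compatible with the precise form of Condition \ref{CurvatureCondition}: the tangential maximum principle on $\Gamma^-$ directly controls only tangential derivatives of $u_{11}$, so it must be combined carefully with the Monge-Amp\`ere equation and the auxiliary weight $(M-u)^\alpha$ to produce exactly the $\min_{\Gamma^-}(\gamma_0 u + \phi)/(M-u)$ term in the curvature condition, rather than merely the cruder bound $2\kappa_\xi < \gamma_0$.
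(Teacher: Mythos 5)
Your reduction of the problem is sound and matches the paper's: twice differentiating the Neumann condition tangentially at a boundary maximum gives, in your notation, $(\gamma_0-2\kappa_1)\mathcal{M}_0 = u_{11n}(x_0)-\kappa_1 u_{nn}(x_0)+R_0$ (this is \eqref{unuxixi} in the paper), and since $u_{nn}\ge 0$ by convexity and $\kappa_1>0$, the term $-\kappa_1 u_{nn}$ can simply be discarded. (Incidentally, your stated bound $u_{nn}(x_0)=O(\mathcal{M}_0^{-(n-1)})$ is false: the determinant constraint together with Hadamard's inequality gives only a \emph{lower} bound on $u_{nn}$, and the paper's own radial examples in Section \ref{sec-explicit} have bounded tangential second derivatives with $u_{\nu\nu}\to\infty$. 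This error is harmless here only because the sign of the term is favourable.)

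The genuine gap is your mechanism for bounding $u_{11n}(x_0)$, which is the entire content of the theorem. A maximum of your function $F=(M-u)^{\alpha}u_{ij}\tilde\xi^i\tilde\xi^j$ taken over $\Gamma^-$ alone yields first-order conditions only in \emph{tangential} directions, so it gives no information whatsoever about the normal derivative $u_{11n}$; and the once-differentiated equation $u^{ij}u_{ij1}=\ln(\psi^n)_1=O(1)$ cannot supply it either, since $u_{11n}$ enters that identity only through the term $2u^{1n}u_{11n}$, whose coefficient $u^{1n}$ is an off-diagonal entry of the inverse Hessian with no sign or size control. The paper resolves exactly this difficulty by making the auxiliary quantity global: $w=\tfrac{u_{\xi\xi}}{M-u}+a_ku_k+b+M|x|^2$ on all of $\overline{\Omega}$, with the Lions--Trudinger--Urbas correctors $a_k,b$. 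Lemma \ref{lem-G} rules out an interior maximum (using the twice-differentiated equation \eqref{Uijxx}, whose third-order quadratic term is nonnegative), so the maximum lies on the boundary, and at a maximum point on $\Gamma^-$ it is the \emph{Hopf-type normal} inequality $\partial_\nu w\le 0$ in \eqref{unuxixi<} that delivers the one-sided bound $u_{\nu\xi\xi}\le -\tfrac{u_\nu u_{\xi\xi}}{M-u}+O(M)$; since $u_\nu=\gamma_0u+\phi$ on $\Gamma^-$, this is also precisely where the $\tfrac{\gamma_0u+\phi}{M-u}$ improvement in Condition \ref{CurvatureCondition} comes from. Note also that the weight must be a negative power, $g(u)=(M-u)^{-1}$: with your positive power $(M-u)^{+\alpha}$ the weight's contribution at the boundary maximum would be $-\alpha\chi/(M-u)$, i.e.\ the wrong sign when $\chi>0$. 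Without the interior-exclusion step and the resulting normal derivative condition, your boundary-only maximum argument cannot close.
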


Moreover, by differentiating the Neumann condition, \[
\sup_{\Gamma^-} |D_{\xi\nu} u| \leq C_3
\]

\begin{condition}[Subsolution Condition]\label{SubsolutionCondition}
	Suppose there exists a strictly convex subsolution $\underline{u} \in C^4(\Omega)\cap C^3(\overline{\Omega})$ satisfying
	\begin{equation}\label{TheEquationSubsolution}
	\begin{cases}
	\det[D^2 \underline{u}] \geq \psi^n  & \text{ in } \Omega \, , \\
	\underline{u} = 0  & \text{ on } \Gamma^+ \, , \\
	\underline{u}_\nu = \gamma_0 \underline{u} + \phi(x) & \text{ on } \Gamma^- \, ,
	\end{cases}
	\end{equation}
	with the same $\phi$ as in \eqref{TheEquation}. Suppose additionally that $\gamma_0 > 0$ and that there exists $\tau > 0$ depending on $n, \Omega, \psi$ and $\phi$ (but not $\underline{u}$) such that for all $x_0\in\Gamma^-$, any unit vector $\xi$ tangential to $\Gamma^-$ at $x_0$ and any unit-speed geodesic curve $\gamma : (-a,a) \longrightarrow \Gamma^-$ passing through $x_0$ with the properties
	\begin{enumerate}
		\item[(i)] $\gamma(0) = x_0$,
		\item[(ii)] $\gamma'(s)|_{s=0} = \xi$,
		\item[(iii)] $\gamma''(s)|_{s=0} = -\kappa_{\xi} \nu$,
	\end{enumerate}
	the differential inequality \[
	\frac{1}{\gamma_0} \frac{d^2}{ds^2} U + \kappa_\xi (x_0) U + \underline{u}_{\xi\xi}(x_0) \geq \tau
	\] holds at $x_0$. Here $U(s) = (u_\nu - \underline{u}_\nu)(\gamma(s))$ and $\kappa_\xi(x_0)$ is the normal curvature of $\Gamma^-$ at $x_0$ in the direction $\xi$.
\end{condition}

\begin{theorem}[Boundedness of Double Normal Derivative on $\Gamma^-$ under the Subsolution Condition]\label{main-th-nor}
	Suppose $u\in C^4(\Omega)\cap C^3(\overline{\Omega})$ is a strictly convex solution of \eqref{TheEquation}, and let $\Omega$ and $\Gamma^\pm$ be as in Theorem \ref{main-th-C01}. Suppose that Condition \ref{SubsolutionCondition} holds on $\Gamma^-$. Then \[
	\sup_{\Gamma^-} |D_{\nu\nu}| \leq C_4 \, .
	\]
\end{theorem}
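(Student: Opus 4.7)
The plan is to unpack Condition \ref{SubsolutionCondition} into a pointwise uniform lower bound on the tangential Hessian of $u$ along $\Gamma^-$, and then to read off the bound on $u_{\nu\nu}$ from the Monge-Amp\`ere equation by a Schur-complement argument.

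First I would rewrite $U$ using the Neumann condition. Since both $u$ and $\underline{u}$ satisfy $v_\nu = \gamma_0 v + \phi$ on $\Gamma^-$ with the same $\phi$, and since $\gamma(s)\in\Gamma^-$,
\[
U(s) = (u_\nu - \underline{u}_\nu)(\gamma(s)) = \gamma_0\,(u - \underline{u})(\gamma(s))\,.
\]
Differentiating twice in $s$ and using $\gamma'(0) = \xi$ with $\gamma''(0) = -\kappa_\xi \nu$, the chain rule identity $(v\circ\gamma)''(0) = v_{\xi\xi}(x_0) - \kappa_\xi v_\nu(x_0)$ gives
\[
\frac{U''(0)}{\gamma_0} = (u - \underline{u})_{\xi\xi}(x_0) - \kappa_\xi\,U(0)\,.
\]
Plugging this into the differential inequality in Condition \ref{SubsolutionCondition}, the $\kappa_\xi U$ terms cancel and $\underline{u}_{\xi\xi}$ recombines with $(u - \underline{u})_{\xi\xi}$, leaving the equivalent pointwise statement $u_{\xi\xi}(x_0) \geq \tau$ for every unit tangential vector $\xi$ at every $x_0 \in \Gamma^-$. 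Thus the hypothesis is precisely uniform tangential convexity of $u$ on $\Gamma^-$.

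Next, at a fixed $x_0 \in \Gamma^-$ I would choose an orthonormal frame $\{e_1,\dots,e_{n-1},\nu\}$ whose first $n-1$ vectors span $T_{x_0}\Gamma^-$ and decompose
\[
D^2 u(x_0) = \begin{pmatrix} A & b \\ b^\top & u_{\nu\nu} \end{pmatrix}\,,
\]
where $A_{ij} = u_{e_i e_j}$ and $b_i = u_{e_i \nu}$. The first step gives $A \succeq \tau\,I_{n-1}$, so $\det A \geq \tau^{n-1}$ and $\|A^{-1}\| \leq \tau^{-1}$, and the Schur-complement identity combined with $\det D^2 u = \psi^n$ rearranges to
\[
u_{\nu\nu}(x_0) \;=\; \frac{\psi(x_0)^n}{\det A} + b^\top A^{-1} b \;\leq\; \frac{\|\psi\|_{\infty}^{n}}{\tau^{n-1}} + \frac{|b|^2}{\tau}\,.
\]
Theorem \ref{main-th-tan} together with one tangential differentiation of the Neumann condition bounds $|b|$ by $C_3$, yielding the required estimate $u_{\nu\nu}(x_0) \leq C_4$; convexity of $u$ supplies the trivial $u_{\nu\nu} \geq 0$.

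The main obstacle is the correct interpretation of Condition \ref{SubsolutionCondition}: once the $U$/geodesic formulation is recognized as an equivalent restatement of $u_{\xi\xi} \geq \tau$ on $\Gamma^-$, the bound on $u_{\nu\nu}$ follows in essentially one line from the equation and the tangential estimates. A more structural variant would avoid assuming the lower bound and instead produce $\tau$ by a genuine barrier comparison of $u$ and $\underline{u}$ --- for instance using concavity of $\log\det$ to show that the linearised operator $L = u^{ij}\partial_{ij}$ satisfies $L(u - \underline{u}) \leq 0$, then exploiting the Neumann relation $(u-\underline{u})_\nu = \gamma_0(u-\underline{u})$ on $\Gamma^-$ via Hopf-type arguments --- but the direct unpacking above is sufficient for the stated theorem.
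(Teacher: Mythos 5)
Your proposal is correct and follows essentially the same route as the paper's proof of Proposition \ref{Double Normal Estimate}: both use $U=\gamma_0(u-\underline{u})$ on $\Gamma^-$ and the geodesic identity $(v\circ\gamma)''(0)=v_{\xi\xi}-\kappa_\xi v_\nu$ to turn Condition \ref{SubsolutionCondition} into $u_{\xi\xi}\geq\tau$ on $\Gamma^-$, and then bound $u_{\nu\nu}$ from the equation together with the mixed-derivative bound \eqref{uxinu}. Your Schur-complement formula $u_{\nu\nu}=\psi^n/\det A+b^\top A^{-1}b$ simply makes explicit the step the paper summarises as ``expanding $u_{\nu\nu}$ in terms of the tangential and mixed second derivatives,'' so there is no substantive difference.
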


The estimates of second derivatives on $\Gamma^+$ follow from previous results on the Dirichlet problem \cite{CNS-1}. The boundary estimates of Theorems \ref{main-th-tan} and \ref{main-th-nor} then imply the uniform global estimate \[
\|u\|_{C^2(\overline{\Omega})} \leq C
\] for solutions $u\in C^4(\Omega)\cap C^3(\overline{\Omega})$ of \eqref{TheEquation}. Via the method of continuity, this a priori estimate in turn implies the following:

\begin{corollary}[Existence of Solutions]\label{Cor-Existence}
	Under Conditions \ref{CurvatureCondition} and \ref{SubsolutionCondition}, there exists a strictly convex solution $u\in C^{2,\alpha}$ of \eqref{TheEquation}.
\end{corollary}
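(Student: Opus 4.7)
The plan is to apply the method of continuity. I would introduce the one-parameter family of problems $(P_t)$ for $t \in [0,1]$ defined by $\det D^2 u = \psi_t^n$ in $\Omega$, $u = 0$ on $\Gamma^+$, $u_\nu = \gamma_0 u + \phi$ on $\Gamma^-$, where $\psi_t^n = (1-t)\det D^2 \underline{u} + t\psi^n$. Then $\psi_t$ is smooth and positive, $(P_1)$ is the original problem \eqref{TheEquation}, and $\underline{u}$ is itself a solution of $(P_0)$. Because $\det D^2 \underline{u} \geq \psi_t^n$ for every $t$, the same $\underline{u}$ remains a strictly convex subsolution of $(P_t)$ for every $t$, so Condition \ref{SubsolutionCondition} (with a single $\tau>0$ obtained from compactness of the family) and Condition \ref{CurvatureCondition} hold uniformly along the path. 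Letting $T = \{t \in [0,1] : (P_t) \text{ admits a strictly convex } C^{2,\alpha}(\overline{\Omega}) \text{ solution}\}$, I would show $0 \in T$ and that $T$ is both open and closed in $[0,1]$, hence $T = [0,1]$ and $(P_1)$ is solvable.

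For openness at $t_0 \in T$ with solution $u_{t_0}$, I would consider the nonlinear map $F(u,t) = \bigl(\log \det D^2 u - n\log\psi_t,\ (u_\nu - \gamma_0 u - \phi)|_{\Gamma^-},\ u|_{\Gamma^+}\bigr)$ from a neighbourhood of $(u_{t_0}, t_0)$ in $C^{2,\alpha}(\overline{\Omega}) \times [0,1]$ into $C^\alpha(\overline{\Omega}) \times C^{1,\alpha}(\Gamma^-) \times C^{2,\alpha}(\Gamma^+)$. The Fr\'echet derivative in $u$ is the linear operator $Lv = u_{t_0}^{ij}v_{ij}$ together with the boundary operators $v\mapsto v|_{\Gamma^+}$ and $v\mapsto v_\nu - \gamma_0 v$ on $\Gamma^-$. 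Since $u_{t_0}$ is strictly convex with uniform $C^2$ control, $L$ is uniformly elliptic; the boundary operators satisfy the Lopatinskii--Shapiro condition, and the maximum principle (using $\gamma_0 \geq 0$ together with the Dirichlet portion) excludes a nontrivial kernel. Thus $L$ is a Banach space isomorphism by standard Schauder theory for mixed boundary problems, and the implicit function theorem produces solutions $u_t$ for all $t$ in a neighbourhood of $t_0$.

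For closedness, let $t_k \to t_\infty$ with solutions $u_{t_k}$ of $(P_{t_k})$. Theorem \ref{main-th-C01} supplies uniform $C^1$ bounds; Theorems \ref{main-th-tan} and \ref{main-th-nor}, applied with the $t$-independent constants from above, give uniform bounds on all second derivatives along $\Gamma^-$; and the classical Caffarelli--Nirenberg--Spruck boundary estimates for the Dirichlet problem \cite{CNS-1} handle $\Gamma^+$. A Pogorelov-type interior maximum principle propagates these boundary bounds into the interior, yielding uniform $\|u_{t_k}\|_{C^2(\overline{\Omega})}$. Since $\psi_{t_k}^n$ is bounded above and below away from zero on the whole family, the equation is uniformly elliptic; the concavity of $\log\det$ on positive definite matrices then permits applying Evans--Krylov in the interior and Krylov's boundary $C^{2,\alpha}$ estimates for mixed Dirichlet--oblique problems, giving uniform $C^{2,\alpha}$ bounds. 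A convergent subsequence has limit $u_\infty \in C^{2,\alpha}(\overline{\Omega})$ solving $(P_{t_\infty})$, so $t_\infty \in T$.

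The step I expect to be the main obstacle is verifying that Condition \ref{SubsolutionCondition}, whose statement implicitly involves the as-yet-unknown solution $u$ through the function $U$, remains in force with a single constant $\tau$ along the entire continuity family. Interpolating $\psi^n$ with $\det D^2 \underline{u}$ (rather than with a simpler reference density) is motivated precisely by the need to keep $\underline{u}$ a valid subsolution for every $(P_t)$; combined with the uniform-in-$t$ a priori bounds established above, this should permit a $t$-independent choice of $\tau$ and thereby a uniform application of Theorem \ref{main-th-nor} across the path.
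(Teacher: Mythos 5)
Your proposal is correct and follows essentially the same route as the paper: uniform $C^2$ bounds on $\Gamma^-$ from Theorems \ref{main-th-tan} and \ref{main-th-nor}, the Dirichlet estimates of \cite{CNS-1} on $\Gamma^+$, an interior reduction to the boundary, an upgrade to $C^{2,\alpha}$, and then the method of continuity. The only real difference is presentational: the paper gets the $C^{2,\alpha}$ bound by citing \cite{LTHJBE} or \cite{HolderEstimates} and then invokes Theorem 17.28 of \cite{GilbargTrudinger}, which packages the homotopy, openness and closedness steps that you carry out by hand (with the explicit path through $\det D^2\underline{u}$), and both treatments leave the solution-dependence of Conditions \ref{CurvatureCondition} and \ref{SubsolutionCondition} along the continuity path at the same level of informality.
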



\subsection{Generalisations and Applications}

Next, we consider a Monge-Amp\`ere-type equation with more general right-hand side:
\begin{equation}\label{TheEquationDu}
\begin{cases}
\det[D^2 u] = \psi^n (x,u,Du)  & \text{ in } \Omega \, , \\
u = 0  & \text{ on } \Gamma^+ \, , \\
u_\nu = \gamma_0 u + \phi(x) &\text{ on } \Gamma^- \, ,
\end{cases}
\end{equation}
where $\Omega, \Gamma^+, \Gamma^-, \nu, \gamma_0$ and $\phi$ are as before, but now the function $\psi$ potentially depends additionally on $u$ and $Du$. Further, we assume that $\psi_z(x,z,p) \geq 0$.

This is a natural generalisation of \eqref{TheEquation} and was also studied by \cite{LTU} in the case of convex domains with Neumann boundary conditions. The Dirichlet problem is treated in such works as \cite{CNS-1}, where the solvability is reduced to the existence of smooth subsolutions, \cite{TUDirichletPGC}, where an almost-sharp structure condition is introduced to bound the gradient of solutions, and Chapter 17 of \cite{GilbargTrudinger}. Obtaining a priori estimates for equations of this type requires extra conditions.

In order to estimate $|u|$, we impose the well-known structure conditions (see for example \cite{GilbargTrudinger}, \cite{LTU}) on $\psi$:

\begin{condition}[Structure Conditions]\label{StructureConditions}
	Let $\psi = \psi(x,z,p)$, $x\in\Omega$, $z \leq 0$, $p \in \R^n$. Suppose that there exist positive functions $g:\Omega \longrightarrow (0,\infty)$ and $h:\R^n \longrightarrow (0,\infty)$ such that
	\begin{equation}\label{Structure}
	\psi^n (x,z,p) \leq \frac{g(x)}{h(p)} \quad \forall x\in\Omega, \ z \leq 0, \ p \in \R^n \, ,
	\end{equation}
	such that $g \in L^1(\Omega)$, $h \in L^1_\text{loc}(\R^n)$, and
	\begin{equation}\label{StructureIneq}
	\int_\Omega g < \int_{\R^n} h \, .
	\end{equation}
	Further, we define the constant $R_0$ such that
	\begin{equation}\label{R0}
	\int_\Omega g = \int_{|p|\leq R_0} h \, .
	\end{equation}
\end{condition}

However, we construct an example showing that this is not sufficient for an a priori estimate on the gradient of solutions. Without extra conditions, at best only a local gradient estimate can be obtained. For a global estimate, we use condition (17.77) from Chapter 17 of \cite{GilbargTrudinger}, which is a more general form of the condition studied in \cite{TUDirichletPGC}.

\begin{condition}[Further Structure Condition]\label{StructureConditionGradient}
	Let $\mathcal{N}$ be a neighbourhood of $\Gamma^+$ and for $x\in\mathcal{N}$ let $d_x = \dist(x,\Gamma^+)$. Suppose that there exist $\beta,Z$ with $\beta \geq 0$ and $Z$ a nondecreasing function such that
	\begin{equation}\label{StructureGradient}
	\psi^n(x,z,p) \leq Z(|z|)d_x^\beta |p|^{\beta + n + 1} \, ,
	\end{equation}
	for all $x\in\mathcal{N}$, $z\in\R$ and $|p|\geq Z(|z|)$.
\end{condition}

For the global gradient bound, we could also assume the existence of a subsolution. It is natural to assume the existence of a subsolution satisfying \eqref{TheEquationSubsolution} since this is needed for the full $C^2$ estimate. Of course Condition \ref{SubsolutionCondition} is more than sufficient for this purpose.

Interestingly, due to the form of barrier we use to prove the tangential $C^2$ estimates on $\Gamma^-$, we actually require the normal $C^2$ estimate on $\Gamma^-$ to be proved first, and thus the tangential estimate also indirectly requires Condition \ref{SubsolutionCondition}.

Specifically, we prove the following results:

\begin{theorem}[$C^0$ and $C^1$ Boundedness of Solutions]\label{main-th-C01Du}
	Suppose $u\in C^2(\Omega)\cap C^1(\overline{\Omega})$ is a strictly convex solution of \eqref{TheEquationDu} in the annular domain $\Omega$, such that $\Gamma^\pm$ are smooth, closed, strictly convex hypersurfaces. Suppose that Condition \ref{StructureConditions} holds. Then there exists a constant $C_0'$ depending on $n, \Omega, \psi$ and $\phi$ such that \[
	\sup_{\overline{\Omega}} |u| \leq C_0'  \, .
	\] Furthermore, if either $\psi = \psi(x,u)$ only and Condition \ref{StructureConditions} holds, or $\psi = \psi(x,u,Du)$ and Conditions \ref{StructureConditions} and \ref{StructureConditionGradient} hold,
	then there exists a constant $C_1'$ depending on $n, \Omega, \psi$ and $\phi$ such that \[
	\|u\|_{C^1(\overline{\Omega})} \leq C_1'  \, .
	\]
\end{theorem}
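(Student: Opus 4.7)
The plan is to prove the three bounds (upper, lower, gradient) in sequence, following the strategy of Lions--Trudinger--Urbas and Chapter 17 of \cite{GilbargTrudinger} adapted to the mixed Dirichlet--Neumann geometry.

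For the $C^0$ bound I first extract an upper bound from the boundary conditions. Strict convexity of $u$ rules out an interior maximum; on $\Gamma^+$ we have $u=0$, and any maximum at $x_-\in\Gamma^-$ has vanishing tangential derivatives and, since $u$ decreases into $\Omega$, satisfies $u_\nu(x_-)\leq 0$. The Neumann condition then forces $u(x_-)\leq \|\phi\|_\infty/\gamma_0$. For the lower bound I would invoke the Aleksandrov--Bakelman--Pucci estimate
\[
|u(x_0)|^n \leq c_n \diam(\Omega)^{n-1}\dist(x_0,\Gamma^+)\,|\partial u(\Omega)| \, ,
\]
applied to a convex extension of $u$ across $\Gamma^-$ into the convex hull of $\Gamma^+$. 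The size of $|\partial u(\Omega)|$ is controlled by Condition \ref{StructureConditions} through the change-of-variables identity
\[
\int_{\partial u(\Omega)} h(p)\,dp \;=\; \int_\Omega h(Du)\det D^2 u\,dx \;\leq\; \int_\Omega g\,dx \;<\; \int_{\R^n} h \, ,
\]
which, together with the definition of $R_0$, produces a bound on $|\partial u(\Omega)|$ comparable to $|B_{R_0}|$ and hence the constant $C_0'$.

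For the $C^1$ bound, the interior part is immediate from convexity: any supporting plane at $x_0$ lies below $u$, giving $|Du(x_0)| \leq 2C_0'/\dist(x_0,\partial\Omega)$. On $\Gamma^-$ the normal component of $Du$ is read off from $u_\nu = \gamma_0 u + \phi$, which is bounded by $\gamma_0 C_0'+\|\phi\|_\infty$; the tangential components are controlled by the oscillation of $u$ along $\Gamma^-$ and the curvature of $\Gamma^-$. On $\Gamma^+$, $u\equiv 0$ forces the tangential derivatives to vanish, so only $u_\nu|_{\Gamma^+}$ remains. In the case $\psi=\psi(x,u)$ a classical barrier is enough: I would construct a strictly convex subsolution $w$ with $\det D^2 w \geq \sup_{|z|\leq C_0'}\psi^n(\cdot,z)$ and $w\leq u$ on $\partial\Omega$; Monge--Amp\`ere comparison gives $w\leq u$ in $\Omega$, and thus $|u_\nu| \leq |w_\nu|$ on $\Gamma^+$.

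In the fully general case $\psi=\psi(x,u,Du)$, the gradient dependence defeats this direct construction, and Condition \ref{StructureConditionGradient} must be used instead, in the spirit of \cite{TUDirichletPGC} and Chapter 17 of \cite{GilbargTrudinger}. I would construct a barrier of the form $w=\varphi(d_x)$ in a tubular neighbourhood of $\Gamma^+$, with $\varphi$ a convex profile solving an ODE whose exponents are calibrated so that $\det D^2 w \sim \varphi''(\varphi')^{n-1} K$ dominates $Z(|u|)\,d_x^{\beta}\,|Dw|^{\beta+n+1}$; the factor $K$ is the Gauss curvature of the level sets of $d_x$, positive and bounded by strict convexity of $\Gamma^+$, and $Z(|u|)\leq Z(C_0')$ is absorbed using the $C^0$ bound. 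A comparison argument on the tubular neighbourhood then yields the bound on $u_\nu|_{\Gamma^+}$. The principal obstacle is verifying that this barrier remains strictly convex and a genuine subsolution throughout the neighbourhood with the correct scaling at $\Gamma^+$ and a clean match to the interior; a small lower-order convex perturbation is likely needed to enforce $w\leq u$ on the inner edge of the neighbourhood without disturbing the subsolution inequality.
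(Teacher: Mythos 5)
Your $C^1$ argument is essentially the paper's: reduce to $\Gamma^+$ by convexity, compare with (a multiple of) a strictly convex defining function of $\Gamma^+$ when $\psi=\psi(x,u)$ (this is Proposition \ref{Gradient}, with $\|\psi\|_\infty$ now depending on the $C^0$ bound), and when $\psi=\psi(x,u,Du)$ use the distance-function barrier built from Condition \ref{StructureConditionGradient}, which is exactly the proof of Theorem 17.21 in \cite{GilbargTrudinger} that the paper invokes (together with an interior/localisation step as in Proposition \ref{GradientDu}). That part is fine, if only sketched near $\Gamma^+$.

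The $C^0$ estimate, however, contains a genuine gap at its key quantitative step. Condition \ref{StructureConditions} does \emph{not} bound the Lebesgue measure of the gradient image: the change-of-variables identity only gives $\int_{Du(\Omega)}h\leq\int_\Omega g=\int_{|p|\leq R_0}h$, i.e.\ a bound on the $h$-weighted measure of $Du(\Omega)$, and since $h$ may be arbitrarily small at infinity this does not yield $|Du(\Omega)|\lesssim|B_{R_0}|$. The paper's own example in Section \ref{sec-CounterexampleDu} (the radial solution with $u_r=-\ln(R_+-r)$) satisfies \eqref{Structure}--\eqref{StructureIneq} while its gradient image has infinite Lebesgue measure, so the ABP inequality you propose cannot be fed with the bound you claim. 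In addition, the convex extension of $u$ across $\Gamma^-$ into the hole is asserted without proof and is not automatic: on a non-convex (annular) domain the tangent plane at a point near $\Gamma^-$ need not minorise $u$ on the far side of the hole, so extendability is a nontrivial global statement, and it is precisely the kind of information one does not have a priori here. The correct use of Condition \ref{StructureConditions}, which is what Proposition \ref{C0Du} (following Theorem 2.1 of \cite{LTU}) does, is qualitative rather than measure-theoretic: for any $R>R_0$ one has $\int_{Du(\Omega)}h<\int_{|p|<R}h$, hence there exists a slope $p\in B_R\setminus Du(\Omega)$; the affine function with slope $p$, raised until it touches $u$, must touch at a boundary point (otherwise $p\in Du(\Omega)$), the touching value is $0$ on $\Gamma^+$ and is bounded below on $\Gamma^-$ by the Neumann condition via $u_\nu\geq\pi_\nu\geq-R$, i.e.\ $u(x_0)\geq(-R-\max_{\Gamma^-}|\phi|)/\gamma_0$, and then $u\geq\pi$ gives $\sup|u|\leq(R_0+\max_{\Gamma^-}|\phi|)/\gamma_0+R_0\,\diam(\Omega)$. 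This avoids both the extension and any control of $|Du(\Omega)|$. Your upper bound for $u$ (interior maximum excluded by convexity, Dirichlet data on $\Gamma^+$, and $u_\nu\leq0$ combined with the Neumann condition at a maximum on $\Gamma^-$) is correct, but the lower bound needs to be replaced by the touching-plane argument above.
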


\begin{condition}
	[Curvature Condition on $\Gamma^-$]\label{CurvatureConditionDu}
	Let $\xi$ be a unit vector tangential to $\Gamma^-$. Suppose that $\gamma_0 > 0$ and the normal curvature $\kappa_{\xi}$ in the direction $\xi$ is bounded above such that
	\begin{equation*}
	2\kappa_\xi + \tilde{C} < \gamma_0 + \max \left\{ 0 , \min_{\Gamma^-}\frac{\gamma_0 u + \phi}{\tilde{M} + (1-N^4)u} \right\} \, ,
	\end{equation*}
	where $N = N(n,\Omega,\psi,\phi)$, $\tilde{C} = \tilde{C}(n,\Omega,\Gamma^\pm,\psi,\phi,N)$ and $\tilde{M} = \tilde{M}(n,\Omega,\Gamma^\pm,\psi,\phi)$ are constants.
\end{condition}

\begin{theorem}[$C^2$ Boundedness of Solutions]\label{main-th-C2Du}
	Suppose $u \in C^4(\Omega)\cap C^3(\overline{\Omega})$ is a strictly convex solution of \eqref{TheEquationDu}, and let $\Omega$ and $\Gamma^\pm$ be as in Theorem \ref{main-th-C01Du}. Suppose that either $\psi = \psi(x,u)$ only and Condition \ref{CurvatureCondition} holds, or $\psi = \psi(x,u,Du)$ and Condition \ref{CurvatureConditionDu} holds. Suppose further that there exists a subsolution satisfying Condition \ref{SubsolutionCondition}. Then there exists a constant $C_2' = C_2'(n,\Omega,\Gamma^\pm,\psi,\gamma_0,\phi)$ such that \[
	\sup_{\overline{\Omega}}|D^2 u| \leq C_2' \, .
	\]
\end{theorem}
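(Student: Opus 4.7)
The plan is to reduce the theorem to boundary $C^2$ estimates on $\partial\Omega = \Gamma^+ \cup \Gamma^-$, from which a global bound follows by a Pogorelov-type test function argument applied to a quantity of the form $w = \log u_{\xi\xi} + \alpha|Du|^2 + \beta u$ with the linearized operator $L = U^{ij}\partial_i\partial_j$ (where $U^{ij}$ denotes the cofactor matrix of $D^2 u$): differentiating \eqref{TheEquationDu} twice in the direction of a maximizing unit vector $\xi$, the extra terms produced by $\psi_u$ and $\psi_{p_k}$ are absorbed using the $C^1$ bound from Theorem \ref{main-th-C01Du} together with Cauchy--Schwarz against $U^{ij}$, so that $Lw \geq 0$ and the maximum of $w$ must be attained on $\partial\Omega$.

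On $\Gamma^+$, homogeneous Dirichlet data and strict convexity of $\Gamma^+$ allow the classical Caffarelli--Nirenberg--Spruck estimates: tangential second derivatives vanish identically on $\{u = 0\}$, mixed tangential--normal derivatives are bounded via a barrier of the form $\underline u + A d - B d^2$ with $d = \dist(\cdot,\Gamma^+)$ (where any strictly convex function with the correct boundary data, or the subsolution $\underline u$ itself, will serve), and the double normal derivative is bounded using the equation $\det D^2u = \psi^n(x,u,Du)$ once the remaining $n-1$ eigenvalues are controlled. See \cite{CNS-1} and Chapter 17 of \cite{GilbargTrudinger}.

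On $\Gamma^-$, we need to adapt Theorems \ref{main-th-tan} and \ref{main-th-nor} to the right-hand side $\psi(x,u,Du)$. For the double tangential estimate, we repeat the barrier argument behind Theorem \ref{main-th-tan} with the test function $u_{\xi\xi} + \text{(lower order terms)}$; differentiating the equation tangentially twice now yields additional contributions $U^{ij}\psi_{p_k}u_{ijk}$, and the stronger Condition \ref{CurvatureConditionDu} with its extra $\tilde C$ and $(1-N^4)u$ terms is calibrated precisely so that the curvature inequality still produces the negative definiteness required to close the argument after these terms are absorbed. The mixed estimate $|u_{\xi\nu}| \leq C$ then follows by tangentially differentiating the Neumann condition $u_\nu = \gamma_0 u + \phi$ and using the already-established tangential bound together with the smoothness of $\Gamma^-$. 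Finally the double normal estimate is delivered by the subsolution barrier of Theorem \ref{main-th-nor}: along a unit-speed geodesic $\gamma$ in $\Gamma^-$ through a point where $u_{\nu\nu}$ is extremized, the function $U(s) = (u_\nu - \underline u_\nu)(\gamma(s))$ has $U''(0)$ of controlled sign, and the differential inequality supplied by Condition \ref{SubsolutionCondition} converts this into an upper bound on $u_{\nu\nu}$ in terms of $\tau$, the subsolution, and the tangential bound just proved.

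The main obstacle I expect is the bootstrapping order among the three estimates on $\Gamma^-$. As the paper explicitly warns, the barrier used for the tangential estimate already involves the normal second derivative, so one cannot simply prove the three estimates independently; instead the normal estimate (which itself uses $|u_{\xi\xi}|$-type tangential quantities only through the subsolution) must be in place before the sharper tangential barrier can be deployed, and the circularity must be resolved by a careful choice of constants $N, \tilde M, \tilde C$ in Condition \ref{CurvatureConditionDu}. A secondary technical difficulty is the dependence of $\psi$ on $Du$: every tangential or normal differentiation of the equation produces third-order terms $\psi_{p_k} u_{ijk}$ that must be handled by splitting off a small $\e$-fraction of $U^{ij} u_{ik} u_{jk}$ from the good term and by invoking the $C^1$ bound of Theorem \ref{main-th-C01Du}. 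Once these two issues are settled, the remaining steps are by now standard.
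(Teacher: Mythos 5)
Your proposal is correct and follows essentially the paper's own route: reduce to boundary estimates, use the Caffarelli--Nirenberg--Spruck machinery on $\Gamma^+$, and on $\Gamma^-$ obtain the mixed bound from the Neumann condition, the double normal bound from the subsolution-induced lower bound $u_{\xi\xi}\geq\tau$ combined with the structure of the equation, and the double tangential bound from the Lions--Trudinger--Urbas-type barrier (weighted by $e^{N|Du|^2}$) closed by Condition \ref{CurvatureConditionDu} (respectively Condition \ref{CurvatureCondition} with the simpler barrier when $\psi=\psi(x,u)$, using $\psi_z\geq 0$), with the normal estimate in place before the tangential one exactly as your closing paragraph states. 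Two small slips to fix: on $\Gamma^+$ the tangential second derivatives do not vanish (they equal the boundary curvature times the bounded normal derivative of $u$), and on $\Gamma^-$ neither the mixed estimate (the Neumann condition and the $C^1$ bound suffice) nor the double normal estimate (which needs only $u_{\xi\xi}\geq\tau$, the mixed bound and the boundedness of $\psi$ from the $C^0$ and $C^1$ estimates) actually uses the tangential upper bound, and it is this independence --- rather than a tuning of $N,\tilde M,\tilde C$ --- that breaks the apparent circularity, the constants in Condition \ref{CurvatureConditionDu} merely having to absorb the boundary term $4N C(n,C_1',C_3,C_4')$, which depends only on the data.
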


As with the $\psi = \psi(x)$ case, the estimates of second derivatives on $\Gamma^+$ follow from previous results on the Dirichlet problem \cite{CNS-1}. Via the method of continuity, these a priori estimates in turn imply the existence of solutions.

\begin{corollary}[Existence of Solutions]\label{Cor-ExistenceDu}
	Under the conditions of Theorems \ref{main-th-C01Du} and \ref{main-th-C2Du}, there exists a strictly convex solution $u\in C^{2,\alpha}$ of \eqref{TheEquationDu}.
\end{corollary}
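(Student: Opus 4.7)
The plan is to apply the method of continuity, with Theorems \ref{main-th-C01Du} and \ref{main-th-C2Du} supplying the required a priori $C^2$ bound and Evans--Krylov/Schauder theory bootstrapping to $C^{2,\alpha}$. Let $\underline u$ denote the subsolution supplied by Condition \ref{SubsolutionCondition} and consider the family $P_t$, $t\in[0,1]$, given by
\begin{equation*}
\begin{cases}
\det D^2 u_t = t\psi^n(x,u_t,Du_t) + (1-t)\det D^2\underline u & \text{in } \Omega, \\
u_t = 0 & \text{on } \Gamma^+, \\
(u_t)_\nu = \gamma_0 u_t + \phi & \text{on } \Gamma^-.
\end{cases}
\end{equation*}
At $t=0$, $u_0 = \underline u$ solves $P_0$; at $t=1$, $P_1$ is exactly \eqref{TheEquationDu}. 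Note that $\underline u$ remains a subsolution for every $t\in[0,1]$, since $\det D^2\underline u \geq \psi^n \geq t\psi^n+(1-t)\det D^2\underline u$, and one checks that Conditions \ref{StructureConditions}, \ref{StructureConditionGradient} and the relevant curvature hypothesis (\ref{CurvatureCondition} or \ref{CurvatureConditionDu}) propagate to the deformed right-hand side. Setting $S = \{t\in[0,1]:P_t\text{ admits a strictly convex }u_t\in C^{2,\alpha}(\overline\Omega)\}$, it suffices to show that $S$ is nonempty, open, and closed in $[0,1]$.

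For closedness, take $t_k\in S$ with $t_k\to t^*$ and solutions $u_{t_k}$. Applied uniformly along the family, Theorems \ref{main-th-C01Du} and \ref{main-th-C2Du} yield $\|u_{t_k}\|_{C^2(\overline\Omega)}\leq C$. The equation forces the determinant of $D^2 u_{t_k}$ to stay bounded below away from zero, and the $C^2$ bound gives a uniform upper bound on the eigenvalues; hence the Monge--Amp\`ere operator is uniformly elliptic on this family. Evans--Krylov estimates in the interior and up to $\Gamma^+$, together with boundary regularity for oblique derivative problems \cite{LTU,Urbas-95,Urbas-98} at $\Gamma^-$, then provide a uniform $C^{2,\alpha}$ bound, and Schauder bootstrapping produces higher regularity. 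A subsequential limit $u^*\in C^{2,\alpha}(\overline\Omega)$ solves $P_{t^*}$ and remains strictly convex, so $t^*\in S$.

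For openness, fix $t_0\in S$ with solution $u_{t_0}$ and define the nonlinear operator
\begin{equation*}
\mathcal F(u,t) = \bigl(\det D^2 u - t\psi^n(\cdot,u,Du) - (1-t)\det D^2\underline u,\; u|_{\Gamma^+},\; u_\nu - \gamma_0 u - \phi|_{\Gamma^-}\bigr).
\end{equation*}
Its linearisation in $u$ at $u_{t_0}$ is
\begin{equation*}
Lv = U^{ij}v_{ij} - t_0 n\psi^{n-1}\psi_{p_k}(x,u_{t_0},Du_{t_0})v_k - t_0 n\psi^{n-1}\psi_z(x,u_{t_0},Du_{t_0})v,
\end{equation*}
with boundary conditions $v=0$ on $\Gamma^+$ and $v_\nu-\gamma_0 v=0$ on $\Gamma^-$, where $U^{ij}$ is the cofactor matrix of $D^2 u_{t_0}$. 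Strict convexity makes $L$ uniformly elliptic; the assumption $\psi_z\geq 0$ yields a nonpositive zeroth-order coefficient, and $\gamma_0>0$ supplies the correct sign in the oblique boundary condition. The maximum principle delivers uniqueness for $Lv=0$, and Fredholm theory for mixed Dirichlet--oblique problems in H\"older spaces gives invertibility of $L$ between the appropriate $C^{2,\alpha}$ and $C^\alpha\times C^{2,\alpha}(\Gamma^+)\times C^{1,\alpha}(\Gamma^-)$ spaces. The implicit function theorem then produces a neighbourhood of $t_0$ inside $S$, so $S$ is open; since $0\in S$ and $[0,1]$ is connected, $S=[0,1]$ and in particular $1\in S$.

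The substantive analytic work is carried out in the preceding theorems; the main remaining obstacle here is bookkeeping: one must verify that each of Conditions \ref{StructureConditions}--\ref{StructureConditionGradient}, \ref{SubsolutionCondition} and the curvature hypothesis transfers to the interpolated right-hand side, and that the boundary regularity theory for the oblique problem applies at $\Gamma^-$ simultaneously with the Dirichlet theory at $\Gamma^+$ on the annular domain. Once the uniform $C^2$ estimate of Theorem \ref{main-th-C2Du} is in hand, the remaining ingredients---Evans--Krylov, Schauder, maximum principle, and the implicit function theorem---are entirely standard.
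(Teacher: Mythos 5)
Your overall route is the paper's: a uniform $C^2$ bound from Theorems \ref{main-th-C01Du} and \ref{main-th-C2Du}, an upgrade to a uniform $C^{2,\alpha}$ bound by Krylov--Safonov-type estimates for concave equations with oblique boundary data, and then the method of continuity. The paper compresses exactly this by reducing interior $C^2$ bounds to boundary ones via Section 7(d) of \cite{CNS-1}, quoting the global $C^{2,\alpha}$ estimate from Theorem 3.2 of \cite{LTHJBE} or Section 6 of \cite{HolderEstimates}, and then invoking Theorem 17.28 of \cite{GilbargTrudinger} instead of writing out the openness/closedness argument; your linearisation and openness discussion matches what that theorem does internally. (One small slip: the chain $\det D^2\underline u\ge\psi^n\ge t\psi^n+(1-t)\det D^2\underline u$ is wrong in its second inequality; the correct one-liner is $\det D^2\underline u=t\det D^2\underline u+(1-t)\det D^2\underline u\ge t\psi^n+(1-t)\det D^2\underline u$.)

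The genuine gap is in your closedness step, where you apply Theorems \ref{main-th-C01Du} and \ref{main-th-C2Du} ``uniformly along the family'' after asserting that Conditions \ref{StructureConditions}, \ref{StructureConditionGradient}, the curvature hypothesis and Condition \ref{SubsolutionCondition} ``propagate'' to the right-hand side $t\psi^n+(1-t)\det D^2\underline u$. This is not mere bookkeeping, and as stated it fails. Condition \ref{StructureConditionGradient} with $\beta>0$ cannot hold for any $t<1$: near $\Gamma^+$ the added term $(1-t)\det D^2\underline u$ is bounded below by a positive constant (strict convexity of $\underline u$ up to $\overline\Omega$), while $Z(|z|)\,d_x^\beta|p|^{\beta+n+1}\to0$ as $d_x\to0$ at fixed admissible $|p|$. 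Condition \ref{StructureConditions} also does not transfer in any obvious way, since $h(Du_t)\det D^2u_t$ is no longer dominated by $g$. Moreover, Condition \ref{SubsolutionCondition} is a hypothesis about solutions $u$ of \eqref{TheEquationDu} itself (it involves $U=u_\nu-\underline u_\nu$), so it cannot simply be carried over to the unknown solutions $u_t$ of the deformed problems. The repair is the one the paper indicates in Section \ref{sec-C2Du}: along the family use $\underline u$, which (as you note) remains a subsolution for every $t$, together with the comparison/maximum principle to obtain $\underline u\le u_t\le0$, hence uniform $C^0$ bounds, and, since $u_t=\underline u=0$ on $\Gamma^+$, the bound $|Du_t|\le|D\underline u|$ on $\Gamma^+$ and hence globally by convexity; the boundary second-derivative estimates must then be re-derived for the interpolated right-hand side with constants checked to be uniform in $t$, rather than quoted verbatim from the theorems. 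With that replacement your continuity argument goes through and reproduces the paper's conclusion.
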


We are also interested in applications of these results to the equation of prescribed Gauss curvature,

\begin{equation}\label{PrescribedGauss}
\begin{cases}
\det[D^2u] = K(x)(1 + |Du|^2)^{\frac{n+2}{2}}  & \text{ in } \Omega \, , \\
u = 0  & \text{ on } \Gamma^+ \, , \\
u_\nu = \gamma_0 u + \phi(x) & \text{ on } \Gamma^- \, .
\end{cases}
\end{equation}

This is merely an equation of the same type as \eqref{TheEquationDu}. We insist that the Gauss curvature $K$ satisfies two conditions:
\begin{condition}\label{PGCondition}
	Let $\omega_n$ be the volume of the unit $n$-ball in $\R^n$. Suppose $K$ is a smooth function such that
	\begin{equation}\label{GClessthannball}
	\int_{\Omega} K < \omega_n \, ,
	\end{equation}
	\begin{equation}\label{K=0}
	K(x) = 0 \quad \forall x\in\Gamma^+ \, .
	\end{equation}
\end{condition}
It has been shown many times (see for example \cite{UrbasWithoutBC}) that \eqref{GClessthannball} with a nonstrict inequality is a necessary condition for the existence of a convex, $C^2$ solution of \eqref{PrescribedGauss}. This follows from the fact that the gradient map $Du : \Omega \longrightarrow \R^n$ is injective when $u$ is convex, and has Jacobian $\det[D^2 u]$. Therefore by integrating \eqref{PrescribedGauss}, \[
\int_{\Omega} K = \int_{Du(\Omega)} \frac{\mathrm{d}p}{(1 + |p|^2)^{(n+2)/2}} \leq \int_{\R^n} \frac{\mathrm{d}p}{(1 + |p|^2)^{(n+2)/2}} = \omega_n \, .
\] However, Condition \ref{PGCondition} is also sufficient for proving $C^0$ and global $C^1$ bounds for solutions of \eqref{PrescribedGauss}.

We shall also see in this case that thanks to the structure of the right-hand side of \eqref{PrescribedGauss}, namely $\psi$ is convex with respect to $Du$, we may relax Condition \ref{CurvatureConditionDu} on the curvatures of $\Gamma^-$ to prove the $C^2$ estimate. In fact, we only need assume Conditions \ref{CurvatureCondition} and \ref{SubsolutionCondition} hold, as in the case where $\psi = \psi(x)$. Therefore, provided Conditions \ref{PGCondition}, \ref{CurvatureCondition} and \ref{SubsolutionCondition} hold, there exists a convex, $C^{2,\alpha}$ solution of \eqref{PrescribedGauss}.


\subsection{Parabolic Flows}

Finally, we generalise our results to parabolic flow equations. Let $0 < T < \infty$, and define the open parabolic cylinder $\Omega_T \eqdef \Omega\times(0,T)$. Since $\Omega$ is annular, $\Omega_T$ is not really a cylinder. In fact, if $\Omega$ is a planar domain, it looks like a tall, somewhat angular doughnut. We settle on referring to $\Omega_T$ as an ``annular cylinder". Consider the equation
\begin{equation}\label{TheFlowEquation}
\begin{cases}
-u_t \det[D^2u] = \psi^n(x,u,Du)  & \text{ in } \Omega_T \, , \\
u(x,0) = u_0(x) & \text{ at } t=0 \, , \\
u(\cdot,t) = \vartheta(t) & \text{ on } \Gamma^+ \, ,\\
u_\nu(\cdot,t) = \gamma_0 u(\cdot,t) + \phi(\cdot,t) & \text{ on } \Gamma^- \, ,
\end{cases}
\end{equation}
where $u_0\in C^\infty(\overline{\Omega})$ is a strictly convex solution of $\det[D^2u_0] = \psi^n(x,u_0,Du_0)$ on $\Omega$, $\vartheta'(t) < 0$ for all $t$ and for each fixed $x\in\Gamma^-$,  $\phi_t(x,t) > 0$ for all $t$. As usual, we assume additionally that $\psi_z(x,z,p) \geq 0$.

There exist several different forms for parabolic versions of the Monge-Amp\`ere equation. A discussion of some of these may be found in the notes appended to Chapter 15 of \cite{Lieberman}. For instance, the equation $-u_t + (\det[D^2 u])^{1/n} = \psi$ was studied by Ivochkina and Ladyzhenskaya in \cite{IvoLady1}, where they proved that the Cauchy-Dirichlet problem for this equation has a convex solution in a cylindrical domain under some conditions. They also extended these results to more general equations involving other symmetric polynomials of the eigenvalues of the Hessian, where $(\det[D^2 u])^{1/n}$ is replaced by $S_k^{1/k}$, in \cite{IvoLady2}.

Equations of the form \eqref{TheFlowEquation} are related to the equations of motion for surfaces evolving under various versions of the Gauss curvature flow. In \cite{Tso}, it is shown that the support function of a surface moving by its Gauss curvature $K$ satisfies \eqref{TheFlowEquation} with $\psi^n(x,u,Du) = (1 + |x|^2)^{-1/2(n+1)}$. We will instead link \eqref{TheFlowEquation} with the inverse Gauss curvature flow, whereby the graph of a surface expands at speed $1/K$, without needing to use the support function. \\

Our main results and conditions are:

\begin{theorem}[Upper and Lower Bounds on the Time Derivative of Solutions]\label{main-th-Flowut}
	Suppose $u\in C^2(\Omega_T)\cap C^1(\overline{\Omega_T})$ is a strictly convex solution of \eqref{TheFlowEquation} with $\psi_z \geq 0$ in the annular cylinder $\Omega_T$, such that $\Gamma^\pm$ are smooth, closed, strictly convex hypersurfaces. Then there exist constants $C_T, C^T > 0$ depending on $T,\vartheta,\gamma_0$ and $\phi$ such that
	\begin{equation}
	C_T \leq |u_t(x,t)| \leq C^T \, ,
	\end{equation}
	for all $(x,t)\in\overline{\Omega}\times[0,T]$.
\end{theorem}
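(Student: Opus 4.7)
The plan is to interpret $w \eqdef u_t$ as a solution of a linear parabolic equation and apply the parabolic maximum principle, combined with a Hopf-type argument on the oblique boundary $\Gamma^-$. First, strict convexity of $u$ gives $\det[D^2 u] > 0$, so the PDE together with $\psi > 0$ forces $u_t < 0$ throughout $\overline{\Omega_T}$. Differentiating the PDE in $t$ and using the cofactor identity $\partial_t \det[D^2 u] = U^{ij} u_{ij,t}$, with $U^{ij}$ the positive definite cofactor matrix of $D^2 u$, one obtains the linear parabolic equation
\begin{equation*}
w_t - a^{ij} w_{ij} - b^k w_k + \tilde{c}\, w = 0,
\end{equation*}
with $a^{ij} = -u_t\, U^{ij}/\det[D^2 u]$ positive definite, $b^k = -n\psi^{n-1}\psi_{p_k}/\det[D^2 u]$, and zeroth-order coefficient $\tilde{c} = n\psi^{n-1}\psi_z/\det[D^2 u] \geq 0$ by the hypothesis $\psi_z \geq 0$. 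This is precisely the sign needed for the parabolic maximum principle to apply in its strongest form.

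Next I would read off the initial/boundary data for $w$. Compatibility of $u_0$ with the stationary equation $\det[D^2 u_0] = \psi^n(x,u_0,Du_0)$ gives $w(x,0) = -1$; the Dirichlet condition $u(\cdot,t) = \vartheta(t)$ on $\Gamma^+$ gives $w = \vartheta'(t) < 0$; and differentiating the Robin condition on $\Gamma^-$ in $t$ produces the oblique boundary relation $w_\nu - \gamma_0 w = \phi_t$, with $\phi_t > 0$ by hypothesis.

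To establish $u_t \leq -C_T$ (equivalently $|u_t| \geq C_T$), I would apply the parabolic maximum principle to $w$. The sign $\tilde{c} \geq 0$ rules out strictly positive interior or top-time maxima; on $\{t=0\}$ and $\Gamma^+ \times [0,T]$, $w$ is explicitly bounded above by a strictly negative constant depending only on $\vartheta$; and at a maximum of $w$ on $\Gamma^- \times [0,T]$, the Hopf lemma yields $w_\nu \leq 0$, so the oblique relation forces $\gamma_0 w \leq -\phi_t < 0$, hence $w \leq -(\min_{\Gamma^-\times[0,T]}\phi_t)/\gamma_0$ when $\gamma_0 > 0$ (and when $\gamma_0 = 0$ the strict positivity $\phi_t > 0$ contradicts $w_\nu \leq 0$, so the maximum cannot occur on $\Gamma^-$ at all). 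Taking the largest of these three strictly negative quantities yields the constant $-C_T$. The companion bound $u_t \geq -C^T$ follows from the symmetric application of the minimum principle: at a boundary minimum on $\Gamma^-$, Hopf gives $w_\nu \geq 0$, whence $\gamma_0 w \geq -\phi_t$ yields $w \geq -(\max\phi_t)/\gamma_0$.

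The main technical obstacle is a careful application of the parabolic Hopf lemma on the oblique boundary $\Gamma^-$, in particular at the initial corner $\Gamma^- \times \{0\}$ where compatibility between the initial and boundary data must be used to rule out anomalous behaviour. A secondary difficulty is the degenerate limit $\gamma_0 \to 0$ for the upper bound on $|u_t|$: when $\gamma_0 = 0$ the Hopf inequality on $\Gamma^-$ does not directly constrain $w$ from below, and an auxiliary barrier (for instance of the form $A e^{\mu t} + B$) must be constructed to exclude large negative values of $w$ there, which explains the $\gamma_0$-dependence of the constants $C_T, C^T$ in the statement.
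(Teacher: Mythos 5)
Your linearisation of the equation, the reading of the data ($w=-1$ at $t=0$, $w=\vartheta'<0$ on $\Gamma^+$, $w_\nu=\gamma_0 w+\phi_t$ on $\Gamma^-$), and the upper bound $|u_t|\leq C^T$ are essentially the paper's argument (there it is phrased for $v=-u_t$ and invokes Lieberman's Theorem 2.4): a nonpositive minimum of $w$ is indeed forced to the parabolic boundary when $\tilde c\geq 0$, and the three cases give exactly the constant $\max\{1,\sup|\vartheta'|,\sup\phi_t/\gamma_0\}$.

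The lower bound $|u_t|\geq C_T$ is where there is a genuine gap. Your case analysis presupposes that the \emph{maximum} of $w=u_t$ is attained on the parabolic boundary, but the maximum of $w$ is \emph{negative}, and the weak maximum principle with $\tilde c\geq 0$ only controls nonnegative maxima: for $w<0$ the zeroth-order term $-\tilde c\,w\geq 0$ pushes $w$ upward, so an interior maximum strictly above every parabolic boundary value is not excluded (already $w_t=\Delta w-w$ with $w\equiv-1$ on the parabolic boundary has interior values above $-1$). Consequently the explicit constant $-C_T=\max\{-1,\sup\vartheta',-\inf\phi_t/\gamma_0\}$ you extract from the three boundary pieces is unjustified, and in general false when $\psi_z>0$. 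The paper avoids this by arguing at the zero level: if $u_t$ vanished at some $(x_0,t_0)$, this would be a \emph{nonnegative} maximum of $u_t$, to which the strong parabolic maximum principle (valid for $c\geq 0$) does apply; the point is pushed to the parabolic boundary, and each piece is excluded — $u_t=-1$ at $t=0$, $u_t=\vartheta'<0$ on $\Gamma^+$, and on $\Gamma^-$ the differentiated Robin condition gives $u_{t\nu}=\gamma_0u_t+\phi_t=\phi_t>0$, contradicting $u_{t\nu}\leq 0$ at a boundary maximum. Hence $u_t<0$ on the compact set $\overline{\Omega}\times[0,T]$, and $C_T$ is obtained by continuity and compactness rather than as an explicit boundary quantity (note the paper, too, only gets a soft constant here). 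Your closing remarks about the initial corner and a barrier for $\gamma_0=0$ address secondary points; the essential repair needed is to replace ``the negative maximum of $w$ lies on the parabolic boundary'' by the zero-level contradiction argument just described.
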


A quirk of the form of these parabolic equations is that on a finite time interval, no extra conditions are required for the estimate on $|u|$ besides those needed to ensure $u_0$ is bounded, even in the full generality of $\psi = \psi(x,u,Du)$. On the other hand, analogously to Condition \ref{StructureConditionGradient} for the elliptic case, Condition (15.46) in \cite{Lieberman} is needed for a global gradient estimate when $\psi = \psi(x,u,Du)$.

\begin{condition}[Structure Condition]\label{StructureConditionFlow}
	Let $\mathcal{N}$ be a neighbourhood of $\Gamma^+$ and for $x\in\mathcal{N}$ let $d_x = \dist(x,\Gamma^+)$. Suppose that there exist $\beta,Z$ with $\beta \geq 0$ and $Z$ a nondecreasing function such that
	\begin{equation*}
	\psi^n(x,z,p) \leq Z(|z|)d_x^\beta |p|^{\beta + n + 2} \, ,
	\end{equation*}
	for all $x\in\mathcal{N}$, $z\in\R$ and $|p|\geq Z(|z|)$.
\end{condition}

\begin{theorem}[$C^0$ and $C^1$ Boundedness of Solutions]\label{main-th-FlowC01}
	Suppose $u\in C^2(\Omega_T)\cap C^1(\overline{\Omega_T})$ is a strictly convex solution of \eqref{TheFlowEquation}, and let $\Omega_T$ and $\Gamma^\pm$ be as in Theorem \ref{main-th-Flowut}. Then there exists a constant $C_0^T$ depending on $T,u_0,\vartheta,\gamma_0$ and $\phi$ such that \[
	\sup_{\overline{\Omega_T}} |u| \leq C_0^T  \, .
	\] Furthermore, if either $\psi = \psi(x,u)$ only, or $\psi = \psi(x,u,Du)$ and Condition \ref{StructureConditionFlow} holds, then there exists a constant $C_1^T$ depending on $n, \Omega, \psi, \phi$ and $T$ such that \[
	\|u\|_{C^1(\overline{\Omega_T})} \leq C_1^T \, .
	\]
\end{theorem}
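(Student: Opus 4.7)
The plan is to obtain the $C^0$ bound directly from Theorem~\ref{main-th-Flowut}, then the spatial gradient bound via reduction to $\partial\Omega$ using convexity. Since $u$ is strictly convex in $x$, $\det[D^2u]>0$, and the equation $-u_t\det[D^2u]=\psi^n>0$ forces $u_t<0$ throughout $\Omega_T$. Combined with the two-sided bound $C_T\le|u_t|\le C^T$ from Theorem~\ref{main-th-Flowut} and integration from $t=0$,
\[ u_0(x)-C^T t \;\leq\; u(x,t) \;\leq\; u_0(x)-C_T t, \]
so $\sup_{\overline{\Omega_T}}|u|\le\|u_0\|_{L^\infty(\overline{\Omega})}+C^T T =: C_0^T$, which is the $C^0$ estimate.

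For the spatial gradient, I would first observe that for each fixed $t$, $u(\cdot,t)$ being strictly convex implies that $|Du(\cdot,t)|$ attains its maximum on $\partial\Omega$: if $|Du|$ had an interior maximum at $x^*$, then setting $\xi=Du(x^*)/|Du(x^*)|$, strict monotonicity of $Du$ gives $(Du(x^*+\e\xi)-Du(x^*))\cdot(\e\xi)>0$ for small $\e>0$, so $Du(x^*+\e\xi)\cdot\xi>|Du(x^*)|$ and hence $|Du(x^*+\e\xi)|>|Du(x^*)|$, contradicting maximality. Combined with the $u_t$ bound, it therefore suffices to control $|Du|$ on $\Gamma^+$ and on $\Gamma^-$.

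On $\Gamma^+$, since $u(\cdot,t)=\vartheta(t)$ is spatially constant, all tangential derivatives vanish and only the normal derivative requires an estimate. I would provide this via a parabolic Dirichlet-type upper barrier $w$ with $w|_{\Gamma^+}=\vartheta(t)$, $w\ge u$ on the remainder of the parabolic boundary, and satisfying the reverse parabolic inequality; the comparison principle then yields $|u_\nu|\le|w_\nu|$ on $\Gamma^+$. When $\psi=\psi(x,u)$, a convex profile in $\dist(\,\cdot\,,\Gamma^+)$ suffices; in the general case, Condition~\ref{StructureConditionFlow} is exactly the structural input that enables such a construction, in the spirit of Chapter~15 of~\cite{Lieberman}. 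On $\Gamma^-$, the Neumann condition combined with the $C^0$ estimate immediately gives $|u_\nu|\le\gamma_0 C_0^T+\sup_{\Gamma^-\times[0,T]}|\phi|$. Tangential derivatives on $\Gamma^-$ I would bound by applying the parabolic maximum principle to an auxiliary quantity of the form $w=|Du|^2+Au$ with $A$ large, using strict convexity of $\Gamma^-$ and the oblique condition to control the boundary term.

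I expect the main obstacle to be the tangential gradient estimate on $\Gamma^-$: the Neumann condition provides no direct information on tangential derivatives, so the auxiliary function must be chosen carefully to convert the oblique data into a bound on $|Du|$, balancing the positive contribution of $\phi_t>0$ against the normal curvatures of $\Gamma^-$. A secondary delicate point is the Dirichlet barrier on $\Gamma^+$ in the case $\psi=\psi(x,u,Du)$, where the gradient growth allowed by Condition~\ref{StructureConditionFlow} must be exactly compensated by the $-w_t\det[D^2w]$ term so that the barrier remains a supersolution.
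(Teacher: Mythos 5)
Your $C^0$ argument is exactly the paper's: integrate the two-sided bound on $u_t$ from Theorem \ref{main-th-Flowut} in time and add $\sup_{\overline{\Omega}}|u_0|$; that part is complete and correct.

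For the $C^1$ estimate there is a genuine gap, and it sits precisely where you place the weight of the proof. First, the step where the equation must actually be used --- the normal derivative bound on $\Gamma^+$ --- is only announced, not carried out. Saying that ``a convex profile in $\dist(\cdot,\Gamma^+)$ suffices'' leaves unverified the supersolution property for the parabolic operator $-w_t\det[D^2w]$, and a comparison principle for this fully nonlinear operator is only available on the class of spatially convex, time-decreasing functions, which your barrier would have to be checked to belong to. The paper avoids this issue entirely: it applies the solution's own linearized operator $u^{ij}D_{ij}$ to $v=u-K(\rho-e^t)$, notes that at an interior critical point $v_t=0$ forces $u_t=-Ke^t$, and then uses the equation in the form $\det[D^2u]=\psi^n/(-u_t)$ to get $\sum u^{ii}\geq n(Ke^t)^{1/n}/\|\psi\|_\infty$, so that $u^{ij}v_{ij}<0$ for $K>(\|\psi\|_\infty/\lambda_{\min})^{n/(n+1)}$, excluding an interior minimum; choosing $K$ also larger than $C_0^T/(1-\max_{\Gamma^-}|\rho|)$ pushes the minimum to $\Gamma^+$ and yields $|Du|\leq K|D\rho|$ there. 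Some version of this computation (or a verified comparison argument) is indispensable and is missing from your proposal; in the case $\psi=\psi(x,u,Du)$ both you and the paper defer to Condition \ref{StructureConditionFlow} and Chapter 15 of \cite{Lieberman}, which is fine.

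Second, the step you yourself identify as the main obstacle --- a tangential gradient bound on $\Gamma^-$ via the parabolic maximum principle applied to $|Du|^2+Au$ --- is left entirely unexecuted, and it is in fact unnecessary. The same ray argument you use to exclude an interior maximum disposes of $\Gamma^-$: if $x_0\in\Gamma^-$ and $e$ is any unit direction with $e\cdot\nu\geq 0$ (in particular any tangential direction), then by convexity of the region enclosed by $\Gamma^-$ the ray $x_0+se$ remains in $\overline{\Omega}$ until it meets $\Gamma^+$, and $D_eu$ is nondecreasing along it, so $D_eu(x_0)\leq\sup_{\Gamma^+}|Du|$; the normal component of $Du$ on $\Gamma^-$ is given directly by the Neumann condition together with the $C^0$ bound, as you note. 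Hence $\sup_{\Gamma^-}|Du|\leq\sup_{\Gamma^+}|Du|+\gamma_0C_0^T+\sup|\phi|$ and no auxiliary-function argument on $\Gamma^-$ is needed (this is why the paper reduces everything to $\Gamma^+$). As written, your proposal routes the proof through an unproven and delicate boundary maximum-principle estimate while omitting the one computation where the parabolic structure genuinely enters, so it does not yet constitute a proof.
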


\begin{condition}[Strict Parabolic Subsolution Condition]\label{FlowSubsolutionCondition}
	Suppose there exists a strictly convex subsolution $\underline{u} \in C^4(\Omega_T)\cap C^3(\overline{\Omega_T})$ satisfying
	\begin{equation}\label{TheFlowEquationSubsolution}
	\begin{cases}
	-u_t \det[D^2 \underline{u}] \geq \psi^n(x,\underline{u},D\underline{u}) + \delta_0  & \text{ in } \Omega_T \, , \\
	\underline{u}(x,0) = \underline{u}_0(x) & \text{ at } t=0 \, , \\
	\underline{u}(\cdot,t) = \vartheta(t)  & \text{ on } \Gamma^+ \, , \\
	\underline{u}_\nu(\cdot,t) = \gamma_0 \underline{u}(\cdot,t) + \phi(\cdot,t) & \text{ on } \Gamma^- \, ,
	\end{cases}
	\end{equation}
	for some small $\delta_0 > 0$, where $\underline{u}_0$ is a subsolution satisfying \eqref{TheEquationSubsolution}. Suppose additionally that $\gamma_0 > 0$ and that at each $t\in[0,T]$ there exists $\tau > 0$ depending on $n, \Omega, \psi$ and $\phi$ (but not $\underline{u}$ or $t$) such that for all $x_0\in\Gamma^-$, any unit vector $\xi$ tangential to $\Gamma^-$ at $x_0$ and any unit-speed geodesic curve $\gamma : (-a,a) \longrightarrow \Gamma^-$ passing through $x_0$ with the properties
	\begin{enumerate}
		\item[(i)] $\gamma(0) = x_0$,
		\item[(ii)] $\gamma'(s)|_{s=0} = \xi$,
		\item[(iii)] $\gamma''(s)|_{s=0} = -\kappa_{\xi} \nu$,
	\end{enumerate}
	the differential inequality \[
	\frac{1}{\gamma_0} \frac{d^2}{ds^2} U + \kappa_\xi (x_0) U + \underline{u}_{\xi\xi}(x_0) \geq \tau
	\] holds at $x_0$. Here $U(s) = (u_\nu - \underline{u}_\nu)(\gamma(s))$ and $\kappa_\xi(x_0)$ is the normal curvature of $\Gamma^-$ at $x_0$ in the direction $\xi$. Note that by the parabolic maximum principle (see Theorem 14.1 of \cite{Lieberman}), such a subsolution remains beneath a solution of \eqref{TheFlowEquation} for all time.
\end{condition}

\begin{theorem}[$C^2$ Boundedness of Solutions]\label{main-th-FlowC2}
	Suppose $u \in C^4(\Omega_T)\cap C^3(\overline{\Omega_T})$ is a strictly convex solution of \eqref{TheFlowEquation}, and let $\Omega_T$ and $\Gamma^\pm$ be as in Theorem \ref{main-th-Flowut}. Suppose that either $\psi = \psi(x,u)$ and Condition \ref{CurvatureCondition} holds on $\Gamma^-$, or $\psi = \psi(x,u,Du)$ and Condition \ref{CurvatureConditionDu} holds. Suppose further that there exists a subsolution satisfying Condition \ref{FlowSubsolutionCondition}. Then there exists a constant $C_2^T = C_2^T(n,\Omega,\Gamma^\pm,\psi,\gamma_0,\phi,T)$ such that \[
	\sup_{\overline{\Omega}}|D^2 u| \leq C_2^T \, .
	\]
\end{theorem}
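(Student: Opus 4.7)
The plan is to mirror the strategy used to prove Theorem \ref{main-th-C2Du} in the elliptic setting, while carrying along the extra factor $-u_t$ from the left-hand side of \eqref{TheFlowEquation}. By Theorem \ref{main-th-Flowut} the time derivative $u_t$ is bounded away from $0$ and from $-\infty$ on $\overline{\Omega_T}$, so that $\det[D^2 u] = -\psi^n/u_t$ is uniformly pinched between two positive constants depending on $T$. Combined with the $C^0$ and $C^1$ bounds of Theorem \ref{main-th-FlowC01}, this reduces matters to a bound for $|D^2 u|$ on the parabolic boundary $\overline{\Omega}\times\{0\}\cup\overline{\Gamma^{\pm}}\times[0,T]$, after which a parabolic Pogorelov-type argument will propagate the estimate to the interior.

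The boundary estimates decompose into four pieces. At $t=0$, $D^2u = D^2u_0$ is already controlled by assumption. On $\Gamma^{+}\times[0,T]$ the Dirichlet datum $\vartheta(t)$ is smooth and $\Gamma^{+}$ is strictly convex, so the classical boundary $C^2$ estimate of \cite{CNS-1}, in its parabolic version as in Chapter 15 of \cite{Lieberman}, applies; the extra $u_t$-term is bounded and enters only as a harmless lower-order coefficient. On $\Gamma^{-}\times[0,T]$, the double tangential estimate is obtained by adapting the barrier behind Theorem \ref{main-th-tan}: under Condition \ref{CurvatureCondition} (when $\psi = \psi(x,u)$) or Condition \ref{CurvatureConditionDu} (in the $Du$-dependent case), the same test function works against the parabolic linearized Monge-Amp\`ere operator $u^{ij}\partial_{ij} - u_t^{-1}\partial_t + \text{lower order}$, provided one uses the hypotheses $\vartheta'<0$ and $\phi_t>0$ together with the two-sided bound on $u_t$ to control the new time-derivative contributions. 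Differentiating the Neumann condition $u_\nu = \gamma_0 u + \phi$ tangentially then gives $|D_{\xi\nu} u| \leq C$.

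The double normal bound on $\Gamma^{-}$ is the crux, and is where Condition \ref{FlowSubsolutionCondition} enters. The argument is modelled on that of Theorem \ref{main-th-nor}: one forms an auxiliary function from $u - \underline{u}$ together with $U = u_\nu - \underline{u}_\nu$ propagated along geodesics in $\Gamma^{-}$, and applies the parabolic maximum principle to a suitable linearized combination. The strict inequality $-\underline{u}_t \det[D^2 \underline{u}] \geq \psi^n + \delta_0$ in \eqref{TheFlowEquationSubsolution} supplies precisely the slack needed to absorb the additional time-derivative term produced by the parabolic linearization, while the differential inequality on $U$ furnishes the bound on the boundary contribution coming from differentiating the Neumann condition twice along $\gamma$. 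Combined with the tangential and mixed estimates and the $t=0$ initial data, this controls $|D^2 u|$ on the whole parabolic boundary.

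For the interior estimate I would use a parabolic Pogorelov-type test function, for instance $w = \log D_{\xi\xi} u + \alpha |Du|^2 + \beta u$ with $\alpha,\beta$ chosen large, and show via the linearized parabolic Monge-Amp\`ere operator that $w$ attains its maximum on the parabolic boundary, where it is already controlled. Given the two-sided bound on $\det[D^2 u]$ this follows the standard template of Chapter 17 of \cite{GilbargTrudinger} with the parabolic modifications described in \cite{Lieberman}. The hard part will be the normal $C^2$ estimate on $\Gamma^{-}$: tracking the new parabolic terms in the linearization without disturbing the ODE mechanism that underlies Condition \ref{FlowSubsolutionCondition} is delicate, and it is precisely here that the strict positivity $\delta_0 > 0$ in the parabolic subsolution condition is used in an essential way.
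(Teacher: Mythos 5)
Your overall skeleton does match the paper's: reduce to estimates on the parabolic boundary, handle $t=0$ by the initial data, $\Gamma^+$ by Dirichlet-type arguments, the tangential estimate on $\Gamma^-$ by the LTU-type barrier under the curvature condition, the mixed derivatives by differentiating the Neumann condition, and the double normal derivative via the subsolution condition. However, your description of the crux step --- the double normal estimate on $\Gamma^-$ --- misrepresents the mechanism, and in a way that matters. No parabolic maximum principle, no linearization, and no use of $\delta_0$ occurs there. Since both $u$ and $\underline{u}$ satisfy the same Neumann condition, $U(s) = (u_\nu - \underline{u}_\nu)(\gamma(s)) = \gamma_0 (u-\underline{u})(\gamma(s))$ on $\Gamma^-$; differentiating twice along the geodesic $\gamma$ and using the Frenet relations gives, exactly as in Proposition \ref{Double Normal Estimate},
\begin{equation*}
u_{\xi\xi} = \frac{1}{\gamma_0}\frac{d^2}{ds^2}U + \kappa_\xi U + \underline{u}_{\xi\xi} \geq \tau > 0
\end{equation*}
pointwise on $\Gamma^-$ at each time, directly from Condition \ref{FlowSubsolutionCondition}. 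The only genuinely parabolic input is Theorem \ref{main-th-Flowut}: since $|u_t|\geq C_T$, the determinant $\det[D^2u]=-\psi^n/u_t$ is bounded above, and with $u_{\xi\xi}\geq\tau$ in all tangential directions and $|u_{\xi\nu}|$ bounded, the algebraic structure of the determinant bounds $u_{\nu\nu}$. Your proposed route (``auxiliary function from $u-\underline{u}$ \dots parabolic maximum principle applied to a linearized combination, with $\delta_0$ absorbing the time-derivative term'') is not how the argument works and, as stated, is too vague to be checked; in particular the claim that $\delta_0>0$ is ``used in an essential way'' at this point is wrong.

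Where the strictness $\delta_0>0$ of the parabolic subsolution \emph{is} used is precisely the step you declared harmless: the second-derivative estimates on $\Gamma^+\times[0,T]$. The paper obtains these from the methods of Chapter 15 of \cite{Lieberman}, which require a strict subsolution; the annular domain is not convex, so one cannot simply invoke the classical convex-domain Dirichlet argument of \cite{CNS-1} with $u_t$ treated as a bounded coefficient. Two smaller points: (i) the hypotheses $\vartheta'<0$ and $\phi_t>0$ enter only through the bounds on $u_t$ (Theorem \ref{main-th-Flowut}), not directly in the tangential barrier argument; (ii) for that barrier argument the elliptic test function must actually be modified --- the paper uses $\hat{w}=g(u)u_{\xi\xi}+a_ku_k+b+M(|x|^2-t)$ (and $e^{N|Du|^2}\hat{w}$ when $\psi$ depends on $Du$), so that the critical-point relation $\hat{w}_t=0$ cancels the new $u_{t\xi\xi}$ (resp.\ $u_ku_{kt}$) terms, with $1/|u_t|\leq 1/C_T$ controlling the remainder. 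Your ``the same test function works'' needs this adjustment spelled out before the argument closes.
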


These a priori estimates together imply the existence of solutions.

\begin{corollary}[Existence of Solutions]\label{Cor-FlowExistence}
	Under the conditions of Theorems \ref{main-th-FlowC01} and \ref{main-th-FlowC2}, there exists a strictly convex solution $u\in C^{2,\alpha}$ of \eqref{TheFlowEquation}.
\end{corollary}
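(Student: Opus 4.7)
The plan is to combine the a priori estimates from Theorems \ref{main-th-Flowut}, \ref{main-th-FlowC01} and \ref{main-th-FlowC2} with a parabolic method of continuity, in parallel with the elliptic argument behind Corollary \ref{Cor-ExistenceDu}. I would introduce a one-parameter family of problems $P_\sigma$, $\sigma\in[0,1]$, interpolating between $P_1$, which coincides with \eqref{TheFlowEquation}, and an auxiliary problem $P_0$ whose strictly convex solution is known explicitly (for example by deforming $\psi^n$ to a reference right-hand side for which $u_0$ evolving rigidly as $u_0(x)+(\vartheta(t)-\vartheta(0))$ is a solution, keeping the boundary data unchanged). The set $S=\{\sigma\in[0,1] : P_\sigma \text{ admits a strictly convex solution } u_\sigma\in C^{2,\alpha}(\overline{\Omega_T})\}$ is nonempty because $0\in S$, and I would show it is both open and closed in $[0,1]$, so that $1\in S$.

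Openness follows from the implicit function theorem applied to the linearisation. At a strictly convex solution $u_\sigma$, the linearised operator takes the form
\[
\mathcal{L}v = -\det[D^2 u_\sigma]\,v_t - u_{\sigma,t}\, U^{ij} v_{ij} - n\psi^{n-1}\bigl(\psi_z\, v + \psi_{p_k} v_k\bigr),
\]
where $U^{ij}$ is the cofactor matrix of $D^2 u_\sigma$. The $C^2$ bounds, combined with the two-sided estimates on $|u_{\sigma,t}|$ from Theorem \ref{main-th-Flowut}, guarantee that $\mathcal{L}$ is uniformly parabolic with H\"older coefficients. The mixed Dirichlet--oblique boundary conditions fit the framework of Chapter 5 of \cite{Lieberman}, giving unique solvability of the linearised problem at each $\sigma\in S$, and hence openness.

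Closedness is where the a priori estimates do the real work. For $\sigma_k\in S$ with $\sigma_k\to\bar\sigma$, Theorems \ref{main-th-FlowC01} and \ref{main-th-FlowC2} give uniform $C^0$, $C^1$, and $C^2$ bounds on $u_{\sigma_k}$ with constants depending only on the structural data. Rewriting \eqref{TheFlowEquation} as $\log(-u_t) + \log\det[D^2 u] = n\log\psi$, which is legitimate because $-u_t\geq C_T>0$ by Theorem \ref{main-th-Flowut}, exhibits the operator as concave in $(u_t, D^2u)$. Parabolic Evans--Krylov theory (see Chapter 14 of \cite{Lieberman}) then yields interior $C^{2,\alpha}$ estimates, while the corresponding boundary results supply $C^{2,\alpha}$ bounds up to $\Gamma^+$ (Dirichlet theory) and up to $\Gamma^-$ (oblique boundary theory). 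A parabolic Schauder bootstrap gives uniform $C^{k,\alpha}$ control for every $k$, so a subsequence converges in $C^{2,\alpha}(\overline{\Omega_T})$ to a strictly convex solution of $P_{\bar\sigma}$, establishing closedness.

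The main obstacle I anticipate is the $C^{2,\alpha}$ boundary regularity near $\Gamma^-$, where the oblique condition $u_\nu=\gamma_0 u+\phi$ must interact with the concave structure to invoke Krylov-type boundary Evans--Krylov estimates, together with the compatibility conditions at the parabolic corners $\Gamma^\pm\times\{0\}$ relating $u_0$, $\vartheta$, and $\phi$. Assuming these compatibility conditions are satisfied, as is standard and can be built into the choice of $u_0$ and the deformation $P_\sigma$, the interior and boundary H\"older estimates for second derivatives patch into a global $C^{2,\alpha}(\overline{\Omega_T})$ estimate, closing the continuity argument and producing the strictly convex $C^{2,\alpha}$ solution claimed.
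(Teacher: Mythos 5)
Your proposal follows essentially the same route as the paper, which at this point simply invokes the a priori estimates of Theorems \ref{main-th-Flowut}, \ref{main-th-FlowC01} and \ref{main-th-FlowC2} together with the parabolic Krylov--Safonov-type $C^{2,\alpha}$ estimates of \cite{Lieberman} and the method of continuity. Your write-up just fills in the standard details (the continuity family, openness via the linearised oblique--Dirichlet problem, closedness via the uniform $C^2$ bounds, concavity and the lower bound on $|u_t|$, and the corner compatibility conditions), all of which are consistent with the paper's argument.
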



\subsection{Outline of the Article}

The outline of this article is as follows. Sections \ref{sec-C0} and \ref{sec-C1} are concerned with proving the estimates of Theorem \ref{main-th-C01}. These follow standard proofs and also apply if $\gamma_0 = 0$. The first of our second derivative estimates, Theorem \ref{main-th-tan}, is proved in Section \ref{sec-tan}. This employs a barrier-type argument similar to \cite{LTU}, and the curvature condition \ref{CurvatureCondition} on $\Gamma^-$ is used to complete the proof. To round off this part, in Section \ref{sec-explicit} we investigate some specific solutions of \eqref{TheEquation}, culminating in a counterexample which demonstrates that without extra conditions there exist solutions with arbitrarily large double normal derivative on $\Gamma^-$. This is primarily conducted in the 2-dimensional framework when $\Omega$ is a planar domain, but we also show how the argument extends to the $n$-dimensional setting.

Motivated by these examples, we introduce and explain the Subsolution Condition \ref{SubsolutionCondition} in Section \ref{sec-normal}. We show that under this condition, $\inf_{\Gamma^-} u_{\xi\xi} \geq \tau > 0$ on $\Gamma^-$ for any tangential direction $\xi$. This then implies an estimate on the double normal derivative thanks to the structure of the Monge-Amp\`ere equation. In Section \ref{sec-existence} we apply standard Krylov-Safonov-type estimates and the method of continuity to obtain the existence result of Corollary \ref{Cor-Existence} under the conditions of Theorems \ref{main-th-tan} and \ref{main-th-nor}.

Section \ref{sec-generalRHS} is devoted to generalising the results to Monge-Amp\`ere-type equations where $\psi = \psi(x,u,Du)$. Section \ref{sec-C0Du} covers the $C^0$ estimate under the additional structure condition \ref{StructureConditions}. We present a counterexample in Section \ref{sec-CounterexampleDu} to demonstrate that further conditions are needed for a global gradient estimate. Local and global estimates on the gradient are then carried out in Section \ref{sec-C1Du}. The $C^2$ estimate of Theorem \ref{main-th-C2Du} is carried out in Section \ref{sec-C2Du}, where, alongside our subsolution condition, we must assume a stronger condition on the principal curvatures of $\Gamma^-$ than for the $\psi = \psi(x)$ case.

An application of our estimates to the equation of prescribed Gauss curvature is developed in Section \ref{sec-PrescribedGC}. Estimates for the maximum modulus, gradient and Hessian of solutions are outlined, and in Section \ref{sec-PGCBC} we briefly describe a setting which leads naturally to the Neumann condition on the inner boundary.

In Section \ref{sec-Parabolic}, we generalise our results even further to parabolic Monge-Amp\`ere-type equations. After first deriving bounds on the time derivative of solutions $u_t$ in Section \ref{sec-Flowut}, the usual $C^0, C^1$ and $C^2$ estimates are carried out across sections \ref{sec-FlowC0}, \ref{sec-FlowC1} and \ref{sec-FlowC2}.


\section{The Basic Monge-Amp\`ere Equation with $\psi=\psi(x)$}\label{sec-Basic}

\subsection{The $C^0$ Estimate}\label{sec-C0}
In this section we prove uniform estimates for solutions of the problem \eqref{TheEquation}. Our proof uses a barrier construction and it works for the case $\gamma_0=0$.

\begin{proposition}\label{C0Estimate}
	Suppose $u\in C^2(\Omega)\cap C^1(\overline{\Omega})$ is a strictly convex solution of \eqref{TheEquation}. Then $u$ is bounded, with \[
	0 \leq -u \leq C_0 \eqdef e^{1/2} \max\left\{ \frac{\|\psi\|_\infty}{K(1 - K\max_{\overline{\Omega}}|x|^2)} , \frac{\max_{\Gamma^-}|\phi(x)|}{K m_0} \right\} \, ,
	\] for any constant $K < 1/\max_{\overline{\Omega}} |x|^2$. Here, $m_0 = \min_{x\in\Gamma^-}\inprod{x}{\nu} > 0$.
\end{proposition}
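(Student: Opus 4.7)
The plan is to reduce the proposition to a single well-chosen exponential barrier. The claim $-u\ge 0$ should follow from strict convexity together with the structure of the boundary conditions: $u$ cannot attain a maximum at an interior point since $D^2 u>0$, any maximum on $\Gamma^+$ forces $u\le 0$ there, and a putative positive maximum on $\Gamma^-$ will be ruled out by the same barrier used for the quantitative estimate.

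For the upper bound on $-u$, I propose the exponential subsolution
\[
v(x)\;=\;A\bigl(e^{K|x|^{2}/2}-e^{KR^{2}/2}\bigr),\qquad R=\max_{\overline{\Omega}}|x|,
\]
with $K\in(0,\,1/R^{2}]$ and $A>0$ to be fixed. A direct computation gives
\[
D^{2}v=AK\,e^{K|x|^{2}/2}\bigl(I+K\,xx^{T}\bigr),\qquad
\det D^{2}v=(AK)^{n}(1+K|x|^{2})\,e^{nK|x|^{2}/2}\;\ge\;(AK)^{n},
\]
so $v$ is strictly convex on $\overline\Omega$ and satisfies $v\le 0$ everywhere, with equality where $|x|=R$. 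I would then choose $A$ large enough that both \emph{(i)} $\det D^{2}v>\psi^{n}$ in $\Omega$, which needs $AK\gtrsim\|\psi\|_{\infty}$, and \emph{(ii)} the strict boundary inequality $v_{\nu}>\gamma_{0}v+\phi$ on $\Gamma^{-}$ holds; using $v\le 0$, $\gamma_{0}\ge 0$, and $\inprod{x}{\nu}\ge m_{0}$, the latter reduces to $AKm_{0}\gtrsim\max_{\Gamma^{-}}|\phi|$.

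With this barrier in place, the crux is a comparison argument showing $v\le u$ in $\overline\Omega$. Set $w=v-u$ and suppose $w$ attains a positive maximum at some $x_{0}\in\overline\Omega$. Since $w\le 0$ on $\Gamma^{+}$, either $x_{0}\in\Omega$ or $x_{0}\in\Gamma^{-}$. An interior max gives $D^{2}v(x_{0})\le D^{2}u(x_{0})$; both Hessians are positive definite, so monotonicity of the determinant on the positive-definite cone yields $\det D^{2}v(x_{0})\le\det D^{2}u(x_{0})=\psi^{n}(x_{0})$, contradicting the strict subsolution inequality. A maximum on $\Gamma^{-}$ forces $w_{\nu}(x_{0})\le 0$, hence
\[
v_{\nu}(x_{0})\;\le\;u_{\nu}(x_{0})\;=\;\gamma_{0}u(x_{0})+\phi(x_{0})\;\le\;\gamma_{0}v(x_{0})+\phi(x_{0})\;<\;v_{\nu}(x_{0}),
\]
where the middle inequality uses $u(x_{0})\le v(x_{0})$ and $\gamma_{0}\ge 0$, again a contradiction. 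Thus $v\le u$, so that
\[
-u\;\le\;-v\;\le\;A\bigl(e^{KR^{2}/2}-e^{K|x|^{2}/2}\bigr)\;\le\;Ae^{KR^{2}/2}\;\le\;A\,e^{1/2},
\]
using $KR^{2}\le 1$. Choosing $A$ to saturate the two constraints from (i) and (ii) delivers the stated form of $C_{0}$.

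The main technical point is maintaining the \emph{strict} subsolution inequalities throughout; without strictness, the comparison argument fails in the degenerate equality case at a maximum. This is handled by taking $A$ slightly larger than the two sharp thresholds and absorbing the slack into the final constant, a routine approximation which does not affect the displayed form of the estimate.
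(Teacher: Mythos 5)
Your quantitative estimate $-u\le C_0$ is correct, and it proceeds by a genuinely different route from the paper. The paper works with the auxiliary function $\ln(u^2)+K|x|^2$, uses the trace bound $\sum u^{ii}\ge n/\|\psi\|_\infty$ at an interior maximum and a Hopf-type inequality at a maximum on $\Gamma^-$; your argument instead builds an explicit strictly convex lower barrier $v=A\bigl(e^{K|x|^2/2}-e^{KR^2/2}\bigr)$ and runs a comparison principle, using monotonicity of the determinant on positive definite matrices at an interior maximum of $v-u$ and the Neumann condition together with $\inprod{x}{\nu}\ge m_0$ at a maximum on $\Gamma^-$. Both proofs ultimately rest on the same ingredients (the determinant/trace structure and the star-shapedness estimate $\inprod{x}{\nu}\ge m_0$), but your version avoids the logarithm (hence any issue where $u$ vanishes), only needs $u\in C^1(\overline\Omega)$ at the boundary, and in fact delivers the slightly sharper constant $e^{1/2}\max\{\|\psi\|_\infty/K,\ \max_{\Gamma^-}|\phi|/(Km_0)\}$, without the factor $(1-K\max_{\overline\Omega}|x|^2)^{-1}$; since this is $\le C_0$, the proposition follows. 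The approximation remark about strictness of the subsolution inequalities is fine: prove the bound for every $A$ strictly above the two thresholds and let $A$ decrease to the maximum of the thresholds.

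There is one genuine (though easily repaired) gap: the claim $0\le -u$. Your barrier gives $v\le u$, i.e.\ a \emph{lower} bound for $u$, so it cannot rule out a positive maximum of $u$ on $\Gamma^-$; the sentence asserting that the same barrier handles this case does not work, and comparison with the zero function also fails since $0$ need not be a supersolution of the Neumann condition when $\phi$ changes sign (the paper itself exhibits such $\phi$). A correct argument uses convexity directly: given $x\in\overline\Omega$, separate $x$ from the convex hole bounded by $\Gamma^-$ by a hyperplane through $x$ (a supporting hyperplane if $x\in\Gamma^-$); any line through $x$ inside that hyperplane misses the open hole, so it meets $\Gamma^+$ in two points $p,q$ and the segment $[p,q]$ lies in $\overline\Omega$ and contains $x$. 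Since $u$ is convex along this segment and $u(p)=u(q)=0$, we get $u(x)\le 0$. With that replacement your proof of the proposition is complete.
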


\begin{proof}
	Let $v(x) = \ln [u(x)^2] + K|x|^2$ on $\Omega$, for some positive constant $K$ to be chosen. Note that $v = -\infty$ on $\Gamma^+$, and so $v$ can have a local maximum at a point $x_0$ only in the interior of $\Omega$ or on $\Gamma^-$. We consider these two possibilities separately and show $u$ is bounded in each case. \\
	
	First, suppose $v$ has a local maximum at some point $x_0$ in the interior of $\Omega$. Then at $x_0$, we have
	\begin{align}
	& v_i = \frac{2u_i}{u} + 2Kx_i = 0 \, , \label{C0 vi} \\
	& v_{ii} = \frac{2u_{ii}}{u} - \frac{2u_i^2}{u^2} + 2K \leq 0 \, , \label{C0 vii}
	\end{align}
	for each $1 \leq i \leq n$. By choosing coordinates at $x_0$ such that $D^2 u$ is diagonal, the inverse of $D^2 u$, whose components are denoted $u^{ij}$, satisfies the bound on its trace
	\begin{equation}\label{C0BoundOnTrace}
	\sum_{i=1}^n u^{ii} = \sum_{i=1}^n \frac{1}{u_{ii}} \geq \frac{n}{(\prod_{i=1}^n u_{ii})^{1/n}} = \frac{n}{\psi} \geq \frac{n}{\|\psi\|_\infty} \, .
	\end{equation}
	
	Now \eqref{C0 vii} gives
	\begin{align*}
	0 \geq \sum_{i=1}^n u^{ii}v_{ii} & = \frac{2u^{ii}u_{ii}}{u} - 2u^{ii} \left(\frac{u_i}{u}\right)^2 + 2K \sum_{i=1}^n u^{ii} \\
	& \stackrel{\eqref{C0 vi}}{=} \frac{2n}{u} - 2K^2 \sum_{i=1}^n u^{ii} x_i^2 + 2K \sum_{i=1}^n u^{ii} \\
	& \geq \frac{2n}{u} - 2(K^2 \max_{\overline{\Omega}} |x|^2 - K) \sum_{i=1}^n u^{ii} \\
	& \stackrel{\eqref{C0BoundOnTrace}}{\geq} \frac{2n}{u} + 2K(1 - K\max_{\overline{\Omega}} |x|^2)\frac{n}{\|\psi\|_\infty} \, ,
	\end{align*}
	provided that \[
	K < \frac{1}{\max_{\overline{\Omega}} |x|^2} \, .
	\] 
	
	Observe that this implies $u(x_0) \leq 0$, as expected. Thus if \[
	-u > \frac{\|\psi\|_\infty}{K(1 - K\max_{\overline{\Omega}} |x|^2)} \, ,
	\] then we have $u^{ii}v_{ii} > 0$, which is a contradiction. Consequently, if $v$ has an interior maximum then \[
	-u \leq \frac{\|\psi\|_\infty}{K(1 - K\max_{\overline{\Omega}} |x|^2)} \, .
	\]
	
	Next, suppose that $v$ has a maximum at $x_0 \in \Gamma^-$. Then at $x_0$,
	\begin{equation}\label{HopfC0}
	0 \geq v_\nu = \frac{2u_\nu}{u} + 2K\inprod{x_0}{\nu} = 2\frac{\gamma_0 u(x_0)+\phi(x_0)}{u(x_0)} + 2K\inprod{x_0}{\nu} \, ,
	\end{equation}
	where $\nu$ is the inner normal to $\Gamma^-$. Since $\Gamma^-$ is a strictly convex surface enclosing the origin,
	\begin{equation}\label{Convex}
	\inprod{x_0}{\nu} \geq \min_{x\in\Gamma^-} \inprod{x}{\nu} \eqdef m_0 > 0 \, .
	\end{equation}
	Now, $u(x_0)$ and $\phi(x_0)$ must have opposite signs or else we get a contradiction with \eqref{HopfC0}, so since $u(x_0) < 0$, \[
	-u(x_0) \leq \frac{\phi(x_0)}{ \gamma_0+ K m_0} \leq \frac{\max_{\Gamma^-}|\phi|}{ K m_0} \, .
	\] Thus we have \[
	|u(x_0)| \leq \max\left\{ \frac{\|\psi\|_\infty}{K(1 - K\max_{\overline{\Omega}} |x|^2)} , \frac{\max_{\Gamma^-} |\phi(x)|}{K \min_{x\in\Gamma^-}\inprod{x}{\nu(x)}} \right\} \, ,
	\] for any $K < 1/\max_{\overline{\Omega}} |x|^2$.
	Finally, for any $x\in\Omega$,
	\begin{gather*}
	|u(x)| = \exp\left( \frac{v(x) - K|x|^2}{2} \right) \leq \exp\left( \frac{v(x_0)}{2} \right) = \exp\left( \frac{\ln u(x_0)^2 + K|x_0|^2}{2} \right) \\
	\leq |u(x_0)| \exp\left( \frac{1}{2} \right) \, .
	\end{gather*}
	Hence the result follows.
\end{proof}

\begin{remark} 
	The estimate \eqref{Convex} is true even if $\Gamma^-$ is star shaped with respect to the origin of coordinates. Furthermore, it is independent of the choice of $\gamma_0$.
\end{remark}


\subsection{The $C^1$ Estimate}\label{sec-C1}

In this section we prove a boundary gradient estimate for $u\in C^2(\Omega)\cap  C^1(\overline{\Omega})$ by comparison with the defining function of the strictly convex set enclosed by $\Gamma^+$.

\begin{proposition}\label{Gradient}
	Suppose $u \in C^2(\Omega)\cap C^1(\overline{\Omega})$ is a strictly convex solution of \eqref{TheEquation}. Then
	\begin{equation}\label{GradientBound}
	\sup_{\overline{\Omega}} |Du| \leq C_1 \eqdef \max \left\{ \frac{C_0}{\min_{\Gamma^-}|\rho|} , \frac{\|\psi\|_\infty}{\lambda_\emph{min}} \right\} \max_{\Gamma^+} |D \rho| \, ,
	\end{equation} where $\rho$ is a strictly convex defining function for $\Gamma^+$ such that both $\rho(x) = 0$ and $0 < |D \rho(x)| \leq 1$ on $\Gamma^+$, $\lambda_\emph{min}$ is the minimum eigenvalue of $D^2 \rho$, and $C_0$ is defined in Proposition \ref{C0Estimate}.
\end{proposition}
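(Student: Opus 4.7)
The plan is to compare $u$ from below with a suitable multiple of the convex defining function $\rho$, extract a normal-derivative bound on $\Gamma^+$ via a Hopf-type argument, and then pass from the boundary to a global bound using strict convexity of $u$.

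First, set $A := \max\{C_0/\min_{\Gamma^-}|\rho|,\ \|\psi\|_\infty/\lambda_{\min}\}$ and define the lower barrier $v := A\rho$. Since $D^2\rho \geq \lambda_{\min} I$, a short calculation gives $\det D^2 v = A^n \det D^2 \rho \geq A^n \lambda_{\min}^n \geq \psi^n$ throughout $\Omega$. On the boundary, $v = u = 0$ on $\Gamma^+$ and, by the choice of $A$ together with Proposition \ref{C0Estimate}, $v \leq -A\min_{\Gamma^-}|\rho| \leq -C_0 \leq u$ on $\Gamma^-$. The Monge-Amp\`ere comparison principle applied to the strictly convex pair $u, v$ therefore yields $v \leq u$ throughout $\overline{\Omega}$, with equality on $\Gamma^+$.

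Next, fix $x_0\in\Gamma^+$. The nonnegative function $u - v$ vanishes at $x_0$, so its inward normal derivative there satisfies $D_{\nu}(u - v)(x_0) \geq 0$, where $\nu$ denotes the inward normal to $\Omega$ on $\Gamma^+$. Since $u\equiv 0$ on $\Gamma^+$ forces the tangential components of $Du$ to vanish and $D\rho$ is parallel to the outward normal to $\Gamma^+$, this collapses to $|Du(x_0)| = |u_\nu(x_0)| \leq A|D\rho(x_0)| \leq A\max_{\Gamma^+}|D\rho| = C_1$.

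To lift this boundary bound to all of $\overline{\Omega}$, I would use that strict convexity of $u$ renders $D^2u$ non-degenerate: at any interior critical point of $|Du|^2$ the identity $D^2u\cdot Du = 0$ forces $Du = 0$, which is the unique interior minimum of $|Du|$, so $\sup_{\overline{\Omega}}|Du|$ is attained on $\partial\Omega$. A potential maximum on $\Gamma^-$ is controlled by the tangent-hyperplane inequality: for any $x_0\in\Gamma^-$ and any $y\in\Gamma^+$, convexity gives $Du(x_0)\cdot(y - x_0) \leq -u(x_0) \leq C_0$; choosing $y$ on the ray from $x_0$ in the direction of $Du(x_0)$ and estimating the hitting distance from below via $\rho$ produces a bound of the same form as $C_1$. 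The main obstacle is precisely this $\Gamma^-$ estimate, since the Neumann datum controls only the normal derivative there, so the tangential components of $Du$ on $\Gamma^-$ must be handled by combining convexity with the barrier $\rho$ rather than by a straightforward Hopf-type inequality.
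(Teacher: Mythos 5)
Your barrier step and the resulting estimate on $\Gamma^+$ are correct, and they reach exactly the constant in \eqref{GradientBound}, but by a slightly different mechanism than the paper: you take $v=A\rho$ as a lower barrier and invoke the nonlinear Monge--Amp\`ere comparison principle ($\det[D^2(A\rho)]\geq A^n\lambda_{\min}^n\geq\|\psi\|_\infty^n\geq\psi^n$ in $\Omega$, $A\rho\leq u$ on $\partial\Omega$), then read off $-u_\nu\leq A|D\rho|$ on $\Gamma^+$ from the sign of the inward normal derivative of $u-A\rho$. The paper instead works with $u-K\rho$ and excludes an interior minimum using the linearized operator $u^{ij}$ together with the trace bound \eqref{C0BoundOnTrace}. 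These are close cousins (the comparison principle is proved by essentially the paper's computation), both requiring the constant to exceed $\|\psi\|_\infty/\lambda_{\min}$ for the interior step and $C_0/\min_{\Gamma^-}|\rho|$ to dominate $u$ on $\Gamma^-$; up to this point your proof is sound.

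The genuine gap is in your final step, the passage from the $\Gamma^+$ bound to $\sup_{\overline\Omega}|Du|$. The interior part (a critical point of $|Du|^2$ forces $D^2u\,Du=0$, hence $Du=0$ by strict convexity) is fine, but the $\Gamma^-$ part does not work as written. The inequality ``for any $y\in\Gamma^+$, $Du(x_0)\cdot(y-x_0)\leq-u(x_0)$'' is not available on an annular domain: the supporting-plane inequality $u(y)\geq u(x_0)+Du(x_0)\cdot(y-x_0)$ requires the segment $[x_0,y]$ to lie in $\overline\Omega$, and your chosen $y$, on the ray from $x_0$ in the direction $Du(x_0)$, is admissible only when $Du(x_0)\cdot\nu\geq0$. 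In that favourable case your argument does work (and the hitting-distance estimate via the convexity of $\rho$ indeed reproduces the first term in the maximum defining $C_1$), but if $Du(x_0)$ has a negative normal component the ray immediately enters the hole enclosed by $\Gamma^-$, and this case cannot be dismissed: the skew-concentric examples in Section \ref{sec-explicit} exhibit solutions with $u_\nu<0$ at points of $\Gamma^-$. There your reasoning controls only the tangential part of $Du$ (tangent rays stay in $\overline\Omega$ because the region enclosed by $\Gamma^-$ is convex); the normal component is not reachable by any segment in $\overline\Omega$, and bounded convexity alone does not prevent it from being large and negative. One can of course bound it by the Neumann condition, $|u_\nu|\leq\gamma_0C_0+\sup_{\Gamma^-}|D\phi|$-type data, but that yields a different constant than $C_1$ and reintroduces $\gamma_0$, which Remark \ref{Constants} specifically wants to avoid. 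To be fair, the paper simply asserts that convexity places the maximum of $|Du|$ on $\Gamma^+$ and does not address this point either; but the argument you offer does not close the case $Du\cdot\nu<0$ on $\Gamma^-$, so your derivation of \eqref{GradientBound} on all of $\overline\Omega$ is incomplete at exactly that step.
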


\begin{proof}
	Since $u$ is convex, the maximum of $|Du|$ occurs on the outer boundary $\Gamma^+$. From the Dirichlet condition, the tangential derivatives of $u$ are zero, and so $|Du| = -u_\normal$, where $\normal$ is the inward-pointing unit normal field on $\Gamma^+$.
	
	Let $\rho$ be a strictly convex defining function for $\Gamma^+$ such that both $\rho(x) = 0$ and $0 < |D \rho(x)| \leq 1$ on $\Gamma^+$. Consider the function \[
	v = u - K\rho
	\] on $\Omega$ for some constant $K$ to be chosen later. Recalling that $u$ and $\rho$ are both negative on $\Omega$ and equal on $\Gamma^+$, we claim that for large enough $K$, $v$ cannot have a local minimum inside $\Omega$. Indeed, if $v$ has a minimum at $x_0 \in \Omega$, then at $x_0$, \[
	0 \leq u^{ij}v_{ij} = u^{ij}u_{ij} - Ku^{ij}\rho_{ij} \leq n - K\lambda_\text{min} \sum u^{ii} \stackrel{\eqref{C0BoundOnTrace}}{\leq} n - K\lambda_\text{min} \frac{n}{\|\psi\|_\infty} < 0 \, ,
	\] provided we choose \[
	K > \frac{\|\psi\|_\infty}{\lambda_\text{min}} \, .
	\] Here $\lambda_\text{min} > 0$ is the minimal eigenvalue of $D^2 \rho$, which is strictly positive by strict convexity, and we have employed equation \eqref{C0BoundOnTrace} to bound the trace of $u^{ij}$. If additionally we impose \[
	K > \frac{C_0}{\min_{\Gamma^-}|\rho|} \, ,
	\] then $v > 0$ on $\Gamma^-$. Thus the minimum occurs on $\Gamma^+$. Therefore we have $v_{\normal} \geq 0$, where $\normal$ is the inward-pointing unit normal field on $\Gamma^+$, and so \[
	|D u| = -u_\normal \leq -K\rho_\normal = K|D \rho| \, ,
	\] from which the bound \eqref{GradientBound} follows.
\end{proof}

\begin{remark}\label{Constants}
	Since we did not use the convexity of $\Gamma^-$ in the proof of Proposition \ref{Gradient}, the estimate \eqref{GradientBound} is also true for domains with non-convex inner boundary. Since $C_0$ does not depend on $\gamma_0$, it follows from Proposition \ref{Gradient} that for fixed $\Omega, \psi, \phi$, we have $C^1$ does not depend on $\gamma_0$ either.
\end{remark}


\subsection{The Double Tangential Derivative Estimate on $\Gamma^-$}\label{sec-tan}

In this section we prove a boundary $C^2$ estimate for the second order tangential derivatives on $\Gamma^-$ using a generalised barrier.

\begin{proposition}\label{th-C2}
	Suppose $u \in C^4(\Omega)\cap C^3(\overline{\Omega})$ is a strictly convex solution of \eqref{TheEquation}. Then there exists a constant $C_2 = C_2(n,\Omega,\Gamma^\pm,\psi,\phi)$ such that \[
	\sup_{\Gamma^-}|D_{\xi\xi} u| \leq C_2
	\] for any direction $\xi$ tangential to $\Gamma^-$ provided that \[
	2\max_{\substack{i = 1,...,n-1 \\ \bar{x} \in\Gamma^-}} \kappa_i(\bar{x}) < \gamma_0 + \max \left\{ 0 , \min_{\Gamma^-}\frac{\gamma_0 u + \phi}{M-u} \right\} \, ,
	\] where $\kappa_i(\bar{x})$ is the $i^{\text{th}}$ principal curvature of $\Gamma^-$ at $\bar{x}$, and $M = M(n,\Omega,\Gamma^\pm,\psi,\phi)$ is a constant.
\end{proposition}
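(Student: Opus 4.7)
The plan is to adapt the barrier argument of Lions--Trudinger--Urbas to the present mixed boundary setting, using the twice-tangentially differentiated Neumann condition to reduce the question to the sign of a single coefficient of $u_{\xi\xi}(x_0)$ that is made positive by Condition~\ref{CurvatureCondition}. Let $M_0 := \sup\{u_{\xi\xi}(x) : x\in\Gamma^-,\ \xi\in T_x\Gamma^-,\ |\xi|=1\}$ and, by compactness, suppose it is attained at $x_0\in\Gamma^-$ in direction $\xi_0$. Translating and rotating, place $x_0 = 0$, $\nu(0) = e_n$, $\xi_0 = e_1$, and align $e_1,\dots,e_{n-1}$ with the principal directions of $\Gamma^-$ at $0$ with principal curvatures $\kappa_1,\dots,\kappa_{n-1}$. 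Near $0$, represent $\Gamma^- = \{x_n = \rho(x')\}$ with $\rho(0) = 0$, $D\rho(0) = 0$ and $\rho_{\alpha\beta}(0) = -\kappa_\alpha\delta_{\alpha\beta}$.

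Smoothly extend $\xi_0$ to a vector field $\tau$ on a neighbourhood of $0$ that remains tangent to $\Gamma^-$ at boundary points, and set $w(x) := u_{ij}(x)\tau^i(x)\tau^j(x)$, so that $w(0) = M_0$ and $w \leq M_0$ on $\Gamma^-\cap B_r(0)$. Applying $L := u^{ij}\partial_{ij}$ and differentiating $\log\det D^2 u = n\log\psi$ twice in the $e_1$ direction gives
\[
u^{ij} u_{ij11} = n(\log\psi)_{11} + u^{ik}u^{jl}u_{1ij}u_{1kl} \geq -C,
\]
the quadratic term being nonnegative; together with the bounded contributions from derivatives of $\tau$, this yields $Lw \geq -C(1 + \sum_i u^{ii})$. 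Introduce a barrier of the form
\[
\beta(x) := A(M - u(x)) + B(\rho(x') - x_n) + E|x|^2,
\]
with a large constant $M$ (depending on the data and on the $C^1$-bound $C_1$) so that $M - u \geq 1$ on $\overline{\Omega}$, and positive $A,B,E$ to be chosen. Since $L(M-u) = -n$, $L(x_n - \rho(x'))$ is controlled by $C\sum_i u^{ii}$ (only tangential second derivatives of $\rho$ enter), and $L(|x|^2) = 2\sum_i u^{ii}$, a suitable choice of these constants yields $L(w+\beta) \leq 0$ in $\Omega\cap B_r(0)$, with $w+\beta \leq M_0$ on the lateral boundary and equality only at $0$.

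The Hopf boundary-point lemma then delivers $\partial_\nu(w+\beta)(0) \leq 0$. Expanding $\partial_\nu w(0) = u_{11n}(0) + (\text{bounded terms from } \partial_\nu \tau)$ and computing $\partial_\nu\beta(0) = -A(\gamma_0 u(0) + \phi(0)) - B$ gives an upper bound on $u_{11n}(0)$. Independently, differentiating the Neumann condition $u_\nu = \gamma_0 u + \phi$ twice along the $\Gamma^-$-geodesic through $0$ with velocity $e_1$ and acceleration $-\kappa_1\nu$ furnishes the boundary identity
\[
u_{11n}(0) = (\gamma_0 - 2\kappa_1)\,u_{11}(0) + \kappa_1\, u_{nn}(0) + (\text{known}).
\]
Combining the two and discarding the nonnegative term $\kappa_1 u_{nn}(0)$ by strict convexity of $u$ produces an inequality
\[
\bigl[\,\gamma_0 - 2\kappa_1 + \Theta(x_0)\,\bigr]\, u_{11}(0) \leq C,
\]
where $\Theta(x_0)$ is the contribution coming from $\partial_\nu\beta(0)$ and scales with $\tfrac{\gamma_0 u(0) + \phi(0)}{M - u(0)}$ -- exactly the source of the ratio in Condition~\ref{CurvatureCondition}. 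The condition is precisely what renders the bracketed coefficient strictly positive, so $M_0 = u_{11}(0) \leq C_2$.

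The principal obstacle is the $\kappa_1 u_{nn}(0)$ term appearing in the differentiated boundary identity: the double normal derivative is the subject of the subsequent Theorem~\ref{main-th-nor} and is unavailable here. The barrier must therefore be chosen so that, once this term is discarded using $u_{nn}(0)\geq 0$, the residual coefficient of $u_{11}(0)$ is still positive; the $(M - u)$-factor in $\beta$ is exactly what produces the ratio $(\gamma_0 u + \phi)/(M - u)$ in Condition~\ref{CurvatureCondition}, and the careful bookkeeping of $A,B,E,M$ -- all depending on $n,\Omega,\psi,\phi$ and $C_1$ -- is the delicate technical heart of the proof.
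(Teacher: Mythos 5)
Your overall plan (twice-differentiated Neumann condition plus positivity of the coefficient of $u_{\xi\xi}$) matches the paper's, but the way you try to localise the argument breaks down at two specific points. First, the Hopf-lemma step is not available to you. To conclude $\partial_\nu(w+\beta)(0)\leq 0$ you need $w+\beta$ to attain its maximum over $\overline{\Omega\cap B_r(0)}$ at $0$ (and a subsolution inequality $L(w+\beta)\geq 0$, not the $\leq 0$ you wrote), and in particular you must control $w=u_{ij}\tau^i\tau^j$ on the lateral boundary $\Omega\cap\partial B_r(0)$. At this stage of the argument no interior second-derivative bound is known: the interior maximum-principle reduction (as in Section \ref{sec-existence}) bounds interior second derivatives by boundary second derivatives \emph{in all directions}, including $u_{\nu\nu}$ on $\Gamma^-$, which is precisely what is not yet estimated and which the counterexamples of Section \ref{sec-explicit} show can blow up. So the claim ``$w+\beta\leq M_0$ on the lateral boundary'' is unjustified and cannot be repaired cheaply. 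The paper avoids this circularity by using the global Lions--Trudinger--Urbas auxiliary function $w=g(u)u_{\xi\xi}+a_ku_k+b+M|x|^2$ on all of $\Omega$ and excluding an interior maximum outright by the computation of Lemma \ref{lem-G} (choosing $M$ large so that $F^{ij}w_{ij}>0$ at a critical point); then only the first-order condition $\partial_\nu w\leq 0$ at a boundary maximum on $\Gamma^-$ is needed, with no barrier, no Hopf lemma, and no lateral boundary to control.

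Second, even granting the barrier step, your additive term $A(M-u)$ cannot produce the ratio $\frac{\gamma_0 u+\phi}{M-u}$ as part of the \emph{coefficient} of $u_{11}(0)$: its normal derivative $-A(\gamma_0 u(0)+\phi(0))$ is just a bounded constant and lands in the right-hand side, so from your setup you would only get $(\gamma_0-2\kappa_1)u_{11}(0)\leq C$, i.e.\ the estimate under the strictly stronger hypothesis $2\kappa_\xi<\gamma_0$, not under Condition \ref{CurvatureCondition} as stated. In the paper the improvement comes from the \emph{multiplicative} weight $g(u)=1/(M-u)$ on $u_{\xi\xi}$: at a boundary maximum, $\partial_\nu\bigl(g(u)u_{\xi\xi}\bigr)=\frac{u_\nu u_{\xi\xi}}{(M-u)^2}+\frac{u_{\nu\xi\xi}}{M-u}$, and after combining with \eqref{unuxixi} and dropping $\kappa_\xi u_{\nu\nu}\geq 0$ one obtains the coefficient $\chi_u-2\kappa_\xi+\frac{\chi(x_0)}{M-u}$, which is exactly what the curvature condition makes positive. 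So the mechanism you attribute to $\Theta(x_0)$ is misidentified, and to prove the proposition as stated you would need to replace the additive $A(M-u)$ by the weight $g(u)u_{\xi\xi}$ (or an equivalent), in addition to fixing the first issue.
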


It is convenient to denote the extension of the Neumann condition into $\Omega$ by $\chi(x, u)=\gamma_0 u + \phi(x)$, where by an abuse of notation $\phi$ is a smooth extension of its counterpart on the boundary. Let us consider a generalised auxiliary function modified from that of Lions-Trudinger-Urbas, \[
w = g(u)u_{\xi\xi} + a_k u_k + b + M|x|^2 \, ,
\] where $\xi$ is an arbitrary direction, $g(u)$ is a nonnegative function to be chosen, $M$ is a constant also to be chosen, and $a_k$ and $b$ are as in \cite{LTU}:
\begin{gather*}
a_k = 2(\xi\cdot \nu)(\chi_u\xi_k'-\xi_k'D_i\nu_k) \, , \\
b = 2(\xi\cdot \nu)\xi_k'\chi_{x_k} \, ,
\end{gather*}
$\xi'=\xi-(\xi\cdot\nu)\nu$ with $\nu$ a $C^{2,1}(\Omega)$ extension of the inner unit normal field on $\Gamma^-$.

\begin{lemma}\label{lem-G}
	Let $g(u) \geq 0$ for $u\in [-C_0, 0]$. The auxiliary function $w$ cannot have a local maximum inside $\Omega$ provided that the following three conditions hold:
	\begin{itemize}
		\item[(i)] \[
		g'' - \frac{2(g')^2}{g} \geq 0 \, , \quad u \in [-C_0,0] \, ,
		\]
		\item[(ii)] \[
		g' \geq 0 \, , \quad u \in [-C_0, 0] \, ,
		\]
		\item[(iii)] \[
		-\frac{g'}{g} \text{ is bounded on } [-C_0, 0] \, .
		\]
	\end{itemize}
\end{lemma}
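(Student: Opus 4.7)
The plan is to argue by contradiction: suppose $w$ attains an interior local maximum at some $x_0\in\Omega$, rotate coordinates so that $D^2u(x_0)$ is diagonal, and let $L = u^{ij}\partial_{ij}$ denote the linearized Monge--Amp\`ere operator. Then $w_i = 0$ for every $i$ and $Lw\le 0$ at $x_0$; the aim is to show that for $M$ sufficiently large these are incompatible.

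First, compute $Lw$ using the standard identities obtained by differentiating $\log\det D^2u = n\log\psi$, namely
\[
L(u_k) = n(\log\psi)_k, \qquad L(u_{\xi\xi}) = u^{ip}u^{jq}u_{pq\xi}u_{ij\xi} + n(\log\psi)_{\xi\xi},
\]
together with $L(u)=n$. The product rule gives
\[
L(gu_{\xi\xi}) = g''u^{ij}u_iu_ju_{\xi\xi} + ng'u_{\xi\xi} + 2g'u^{ij}u_iu_{\xi\xi,j} + g\,L(u_{\xi\xi}),
\]
while $L(a_ku_k)+L(b)$ is bounded by $C(1+\tr u^{ij})$ (using $u^{ij}u_{jk}=\delta^i_k$ to control the bilinear piece $2u^{ij}(a_k)_iu_{kj}$ in the diagonal frame), and $L(M|x|^2) = 2M\tr u^{ij}$.

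Second, use $w_i = 0$ to solve for $gu_{\xi\xi,i}$ and substitute into the cross term $2g'u^{ij}u_iu_{\xi\xi,j}$. After invoking $u^{ij}u_iu_{jk} = u_k$, the dominant residue is $-\tfrac{2(g')^2}{g}u^{ij}u_iu_ju_{\xi\xi}$, which combines with $g''u^{ij}u_iu_ju_{\xi\xi}$ from $L(gu_{\xi\xi})$ to give
\[
\Bigl(g''-\tfrac{2(g')^2}{g}\Bigr)u^{ij}u_iu_ju_{\xi\xi} \ge 0
\]
by condition (i), the positive definiteness of $u^{ij}$ and convexity $u_{\xi\xi}\ge 0$. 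The term $ng'u_{\xi\xi}\ge 0$ by (ii) and the third-derivative term $gu^{ip}u^{jq}u_{pq\xi}u_{ij\xi}\ge 0$ automatically; all three are discarded. The secondary residues of the substitution take the form $\tfrac{g'}{g}u^{ij}u_iy_j$ with $y_j$ bounded, plus the $M$-linear contribution $-\tfrac{4Mg'}{g}u^{ij}u_ix_j$ coming from $2Mx_j$; these are estimated via Cauchy--Schwarz in the $u^{ij}$-inner product together with the diagonal bound $u^{ij}u_iu_j\le\|Du\|_\infty^2\tr u^{ij}$, with condition (iii) and Proposition~\ref{Gradient} bounding $g'/g$ and $\|Du\|_\infty$ respectively.

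What remains after collecting everything is
\[
0 \ge Lw \ge (2M - C_1)\tr u^{ij} - C_2,
\]
with $C_1, C_2$ depending only on $n,\Omega,\Gamma^\pm,\psi,\phi,\|u\|_{C^1}$ and the bound in (iii) but not on $M$. Since $\tr u^{ij}\ge n/\|\psi\|_\infty>0$ by AM--GM applied to the Monge--Amp\`ere equation (as already used in \eqref{C0BoundOnTrace}), choosing $M$ large forces $Lw>0$, which contradicts $Lw\le 0$ and completes the proof. The main technical obstacle will be handling the $M$-linear error $-\tfrac{4Mg'}{g}u^{ij}u_ix_j$ introduced by the substitution: it must be absorbed into the $2M\tr u^{ij}$ produced by $L(M|x|^2)$ without upsetting the sign, which is precisely the role of condition (iii); meanwhile (i) and (ii) simultaneously annihilate the other potentially negative contributions, so all three conditions must act in tandem.
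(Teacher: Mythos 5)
Your plan follows the paper's own route almost verbatim: contradiction at an interior maximum, contraction of $D^2w\le 0$ against $F^{ij}=u^{ij}$, the differentiated-equation identities \eqref{Why} and \eqref{Uijxx}, elimination of the third-order terms via $w_i=0$, conditions (i)--(ii) to make $\bigl(g''-2(g')^2/g\bigr)F^{ij}u_iu_j\,u_{\xi\xi}$ and $ng'u_{\xi\xi}$ harmless, discarding the nonnegative term $g\,F^{ij}u_{jk\xi}F^{kl}u_{li\xi}$, and the final trace bound $\sum_i u^{ii}\ge n/\|\psi\|_\infty$ from \eqref{C0BoundOnTrace}; this is exactly the content of Lemma \ref{Fijwij} and the estimate that follows it.

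The gap is at the step you yourself flag as the main obstacle. After substituting $w_j=0$, the cross term produces $-\tfrac{2g'}{g}u^{ij}u_i\,(2Mx_j)$, whose size is only controlled by $4M\,\sup\bigl(|g'|/g\bigr)\,\|Du\|_\infty\,\sup_{\overline\Omega}|x|\cdot\sum_i u^{ii}$. Condition (iii) supplies a \emph{fixed} bound $B$ on $|g'|/g$, so this error grows linearly in $M$, exactly like the favourable term $2M\sum_i u^{ii}$ coming from $M|x|^2$; hence your claimed inequality $Lw\ge(2M-C_1)\sum_i u^{ii}-C_2$ with $C_1$ independent of $M$ does not follow from (i)--(iii) alone, and enlarging $M$ does not help unless one has the quantitative smallness $4B\,\|Du\|_\infty\sup|x|<2$, which is not part of (iii). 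The paper's computation avoids this issue because it is carried out for the concrete choice $g(u)=1/(M-u)$, for which $g'/g=1/(M-u)\le 1/M$: the factor $M$ cancels, the error becomes an $M$-independent multiple of $\sum_i u^{ii}$ (it is the source of the $4C_1\sup_{\Gamma^+}|x|$ inside $\ell_2$), and $M$ is then fixed by \eqref{MBound}; the extension to general $g$ satisfying (i)--(iii) is only asserted afterwards by analogy. To repair your write-up, either run the argument for this specific $g$ (or any family with $\sup(-g'/g)=O(1/M)$), or state explicitly the smallness of the bound in (iii) relative to $1/\bigl(\|Du\|_\infty\sup_{\overline\Omega}|x|\bigr)$ that the absorption actually requires; as written, (iii) alone does not perform the absorption you attribute to it.
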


\begin{proof}
	Suppose for contradiction that $w$ does have a local maximum at $x_0 \in \Omega$. Then at $x_0$ we have
	\begin{align}
	& w_i = g' u_i u_{\xi\xi} + g u_{i\xi\xi} + a_{k,i}u_k + a_ku_{ki} + b_i + 2Mx_i = 0 \, , \label{C2 wi} \\
	& w_{ij} = g'' u_i u_j u_{\xi\xi} + g' u_{ij}u_{\xi\xi} + g' u_i u_{j\xi\xi} + g' u_j u_{i\xi\xi} + g u_{ij\xi\xi} \label{C2 wij} \\
	& \qquad \ \ + a_{k,ij}u_k + a_{k,i}u_{kj} + a_{k,j}u_{ki} + a_k u_{kij} + b_{ij} + 2M\delta_{ij} \leq 0 \, , \nonumber
	\end{align}
	where $g' = g'(u)$. For clarity, we use a comma in the subscript to denote the derivatives of the components $a_k$. Now set $F^{ij} = u^{ij}$ so that
	\begin{equation*}
	F^{ij}u_{jk} = \delta^i_k \text{ and } F^{ij}u_{ij} = n \, ,
	\end{equation*}
	as in \cite{LTU}. \\
	
	We want to multiply equation \eqref{C2 wij} by $F^{ij}$ and eliminate the third order terms using \eqref{C2 wi}. First we show that the fourth order terms can be cancelled. Indeed if $F^{ij}$ and $u_{ij}$ are as above, then for any direction $\xi$ we have
	\begin{equation}\label{Uijxx}
	F^{ij}u_{ij\xi\xi} = \ln(\psi^n)_{\xi\xi} + F^{ij}u_{jk\xi}F^{kl}u_{li\xi} \, .
	\end{equation}
	To derive \eqref{Uijxx}, we first find an expression for $F^{ij}_\xi$. By the product rule,
	\begin{align*}
	F^{jk}u_{kl} = \delta^j_l & \implies F^{jk}u_{kl\xi} + F^{jk}_\xi u_{kl} = 0 \\
	& \implies F^{jk} u_{kl\xi}F^{il} + F^{jk}_\xi \delta^i_k = 0 \\
	& \implies F^{ij}_\xi = -F^{il}F^{jk}u_{kl\xi} \, .
	\end{align*}
	Taking the logarithm of both sides of \eqref{TheEquation} and differentiating in $\xi$ (or applying Jacobi's formula for the derivative of the determinant) we also have
	\begin{equation}\label{Why}
	F^{ij}u_{ij\xi} = \ln(\psi^n)_\xi \, .
	\end{equation}
	Differentiating \eqref{Why} in $\xi$ and substituting in the expression for $F^{ij}_\xi$ gives \[
	F^{ij}u_{ij\xi\xi} = \ln(\psi^n)_{\xi\xi} - F^{ij}_\xi u_{ij\xi} = \ln(\psi^n)_{\xi\xi} + F^{il}F^{jk}u_{kl\xi}u_{ij\xi} \, ,
	\] which is \eqref{Uijxx} after changing some of the indices.
	
	\begin{lemma}\label{Fijwij}
		From $b$, define $\overline{b_k} = 2(\xi\cdot \nu)\xi_k'$. With the above notation, we have
		\begin{gather*}
		F^{ij}w_{ij} = \left[ ng' + \left( g'' - \frac{2(g')^2}{g} \right)F^{ij}u_i u_j \right]u_{\xi\xi} - \frac{2g'}{g}F^{ij}u_i \left( u_k a_{k,j} + b_j + 2Mx_j \right) - \frac{2g'}{g} a_k u_k \\
		+ g\left( \ln(\psi^n)_{\xi\xi} + F^{ij}u_{jk\xi}F^{kl}u_{li\xi} \right) + F^{ij}\Bigl[ a_{k,ij}u_k + \overline{b_k}_{,ij}(\gamma_0 u_k + \phi_k) + 2\overline{b_k}_{,i} \phi_{kj} + \overline{b_k}\phi_{ijk} \Bigr] \\
		+ (a_k + \gamma_0 \overline{b_k}) \ln(\psi^n)_k + 2\sum (a_{k,k} + \gamma_0 \overline{b_k}_{,k}) + 2M\sum F^{ii} \, .
		\end{gather*}
	\end{lemma}	
	
	\begin{proof}
		Multiplying \eqref{C2 wij} by $F^{ij}$ and substituting in the expressions for $u_{i\xi\xi}$ from \eqref{C2 wi}, $F^{ij}u_{kij}$ from \eqref{Why} and $F^{ij}u_{ij\xi\xi}$ from \eqref{Uijxx} yields
		\begin{align*}
		F^{ij}w_{ij} = & \left( g'' F^{ij}u_i u_j + g' F^{ij}u_{ij} \right)u_{\xi\xi} - g' F^{ij}u_i \left[ \frac{g'}{g}u_j u_{\xi\xi} + \frac{1}{g}(a_{k,j} u_k + a_k u_{kj} + b_j + 2Mx_j) \right] \\
		& - g' F^{ij}u_j \left[ \frac{g'}{g}u_i u_{\xi\xi} + \frac{1}{g}(a_{k,i} u_k + a_k u_{ki} + b_i + 2Mx_i) \right] + g\left( \ln(\psi^n)_{\xi\xi} + F^{ij}u_{jk\xi}F^{kl}u_{li\xi} \right) \\
		& + a_{k,ij}F^{ij}u_k + a_{k,i}F^{ij}u_{jk} + a_{k,j}F^{ij}u_{ik} + a_k F^{ij}u_{kij} + F^{ij}b_{ij} + 2M\sum F^{ii} \\
		= & \left( g'' F^{ij}u_i u_j + ng' - \frac{2(g')^2}{g}F^{ij}u_i u_j \right)u_{\xi\xi} \\
		& - \frac{g'}{g}F^{ij}\bigl( u_i a_{k,j}u_k + u_i a_k u_{kj} + u_i b_j + 2Mu_i x_j + u_j a_{k,i}u_k + u_j a_k u_{ki} + u_j b_i + 2Mu_j x_i \bigr) \\
		& + g\left( \ln(\psi^n)_{\xi\xi} + F^{ij}u_{jk\xi}F^{kl}u_{li\xi} \right) + F^{ij}a_{k,ij}u_k + a_k \ln(\psi^n)_k + 2\sum a_{k,k} + F^{ij}b_{ij} + 2M\sum F^{ii} \, ,
		\end{align*}
		from which the result follows by index manipulations in the middle line, and using \eqref{Why} to obtain
		\begin{gather*}
		F^{ij}b_{ij} = F^{ij}\Bigl[ \overline{b_k}_{,ij}\chi_{x_k} + 2\overline{b_k}_{,i} \chi_{x_kx_j} + \overline{b_k}(\gamma_0u_{ijk} + \phi_{ijk}) \Bigr] \\
		= F^{ij}\Bigl[ \overline{b_k}_{,ij}(\gamma_0u_k + \phi_k) + 2\overline{b_k}_{,i} \phi_{jk} + \overline{b_k}\phi_{ijk} \Bigr] + 2\gamma_0 \overline{b_k}_{,k} + \gamma_0 \overline{b_k} \ln(\psi^n)_k \, . \qedhere
		\end{gather*}
	\end{proof}
	
	If we choose $g(u) = 1/(M-u)$, $M > 0$, then \[
	g(u) = \frac{1}{M - u} \, , \quad
	g'=\frac1{(M-u)^2} \, , \quad g'' - \frac{2(g')^2}{g} = 0 \, .
	\]
	
	Notice that $a_k$, $\overline{b_k}$ and their derivatives are bounded by some constant depending only on $\Omega$ and $\chi_u=\gamma_0$, and $|Du| \leq C_1$ with $C_1$ independent of $\gamma_0$ by Proposition \ref{Gradient}, see Remark \ref{Constants}. Also, $F^{ij}b_j = F^{ij}\overline{b_k}_{,j}\chi_{x_k} + F^{ij}\overline{b_k} \phi_{kj} + \gamma_0 \overline{b_i}$, which is bounded. With this choice of $g$, and using the fact $u \leq 0$ is convex, Lemma \ref{Fijwij} implies for large enough $M$
	\begin{gather*}
	F^{ij}w_{ij} \geq \frac{n u_{\xi\xi}}{(M-u)^2} - \frac{2}{M-u}F^{ij}u_i \left( u_k a_{k, j} + b_j + 2Mx_j \right) - \frac{2}{M-u} a_k u_k + \frac{\ln(\psi^n)_{\xi\xi}}{M-u} \\
	+ F^{ij}\Bigl[ a_{k,ij}u_k + \overline{b_k}_{,ij}(\gamma_0 u_k + \phi_k) + 2\overline{b_k}_{,i} \phi_{kj} + \overline{b_k}\phi_{ijk} \Bigr] + (a_k + \gamma_0 \overline{b_k}) \ln(\psi^n)_k + 2\sum (a_{k,k} + \gamma_0 \overline{b_k}_{,k}) + 2M\mathcal{T} \, \\
	\geq
	- \frac{2}{M-u}F^{ij}u_i \left( u_k a_{k, j} + b_j + 2Mx_j \right) + F^{ij}\Bigl[ a_{k,ij}u_k + \overline{b_k}_{,ij}(\gamma_0 u_k + \phi_k) + 2\overline{b_k}_{,i} \phi_{kj} + \overline{b_k}\phi_{ijk} \Bigr] \\
	- \frac{2}{M-u} a_k u_k + \frac{\ln(\psi^n)_{\xi\xi}}{M-u} + (a_k + \gamma_0 \overline{b_k}) \ln(\psi^n)_k + 2\sum (a_{k,k} + \gamma_0 \overline{b_k}_{,k}) + 2M\mathcal{T} \,
	\\
	\geq
	- \frac{2}{M}C_1 \left[ C_1 \|D a\|_\infty + (C_1 + \|D\phi\|_\infty) \|D \overline{b_k}\|_\infty + \|\overline{b_k}\|_\infty \|D^2\phi\|_\infty + 2M\sup_{\Gamma^+} |x| \right] \mathcal{T} \\
	- \Bigl[ C_1 \|D^2 a\|_\infty + (C_1 + \|D\phi\|_\infty) \|D^2 \overline{b_k}\|_\infty + 2\|D\overline{b_k}\|_\infty \|D^2\phi\|_\infty + \|\overline{b_k}\|_\infty \|D^3 \phi\|_\infty \Bigr] \mathcal{T} - \gamma_0\|\overline{b_k}\|_\infty \\
	- \frac{2}{M} \|a\|_\infty C_1 - \frac{n}{M} \|D^2 \ln(\psi)\|_\infty - n \left( \|a\|_\infty + \gamma_0 \|\overline{b_k}\|_\infty \right) \|D \ln(\psi)\|_\infty - 2n(\|Da\|_\infty + \gamma_0 \|\overline{b_k}\|_\infty) + 2M\mathcal{T} \,
	\\
	\geq
	\left\{
	2M
	- \frac{2}{M}C_1 \Bigl[ C_1 \|D a\|_\infty + (C_1 + \|D\phi\|_\infty) \|D \overline{b_k}\|_\infty + \|\overline{b_k}\|_\infty \|D^2\phi\|_\infty + 2M\sup_{\Gamma^+} |x| \Bigr] \right. \\
	\biggl. - \Bigl[ \|D ^2a\|_\infty C_1 + (C_1 + \|D\phi\|_\infty) \|D^2 \overline{b_k}\|_\infty + 2\|D\overline{b_k}\|_\infty \|D^2\phi\|_\infty + \|\overline{b_k}\|_\infty \|D^3 \phi\|_\infty \Bigr] \biggr\} \mathcal{T}
	\\
	- \gamma_0\|\overline{b_k}\|_\infty - \frac{n}{M} \|D^2 \ln(\psi)\|_\infty - n \left( \|a\|_\infty + \gamma_0 \|\overline{b_k}\|_\infty \right) \|D \ln(\psi)\|_\infty - 2n(\|Da\|_\infty + \gamma_0 \|\overline{b_k}\|_\infty) - \frac{2}{M} \|a\|_\infty C_1
	\\
	= \left\{ 2M - \frac{\ell_1}{M} - \ell_2 \right\} \frac{n}{\|\psi\|_{\infty}} - \frac{n}{M} \|D^2 \ln(\psi)\|_\infty - \frac{2}{M} \|a\|_\infty C_1 - \ell_3 \, ,
	\end{gather*}
provided the coefficient of $\mathcal{T}$ is positive, and where
	\begin{gather*}
	\ell_1= 2C_1 \Bigl( C_1 \|D a\|_\infty + (C_1 + \|D\phi\|_\infty) \|D \overline{b_k}\|_\infty + \|\overline{b_k}\|_\infty \|D^2\phi\|_\infty  \Bigr) \, , \\
	\ell_2=\|D ^2a\|_\infty C_1 + (C_1 + \|D\phi\|_\infty) \|D^2 \overline{b_k}\|_\infty + 2\|D\overline{b_k}\|_\infty \|D^2\phi\|_\infty + \|\overline{b_k}\|_\infty \|D^3 \phi\|_\infty + 4C_1\sup_{\Gamma^+} |x| \, , \\
	\ell_3 = \gamma_0\|\overline{b_k}\|_\infty + n \left( \|a\|_\infty + \gamma_0 \|\overline{b_k}\|_\infty \right) \|D \ln(\psi)\|_\infty + 2n(\|Da\|_\infty + \gamma_0 \|\overline{b_k}\|_\infty) \, .
	\end{gather*}
	
	In particular, choosing
	\begin{gather}\label{MBound}
	M > \max\left\{ \frac{\ell_2 + \sqrt{\ell_2^2 + 4 \ell_1}}{2} \, , \, \frac{\|\psi\|_\infty}{n} \left( \frac{\ell_3}{2} + \sqrt{\left( \frac{\ell_3}{2} \right)^2 + \frac {n\left( n \|D^2 \ln(\psi)\|_\infty + 2\|a\|_\infty C_1 \right)}{\|\psi\|_\infty}} \right) \right\} \, ,
	\end{gather}
	we obtain 
	\begin{gather*}
	F^{ij}w_{ij} \geq \frac{Mn}{\|\psi\|_\infty} - \frac{n}{M} \|D^2 \ln(\psi)\|_\infty - \frac{2}{M} \|a\|_\infty C_1 - \ell_3 > 0 \, .
	\end{gather*}
	
	This contradicts the assumption of $w$ having a maximum at $x_0$. Thus the maximum of $w$ must occur on $\d\Omega$. It can readily be seen that this reasoning carries over to any choice of function $g$ which has the properties from Lemma \ref{lem-G}.
\end{proof}

\subsubsection{Proof of Proposition \ref{th-C2}}

We now show that if the maximum of $w$ occurs on $\Gamma^-$ then the second order tangential derivatives of $u$ are bounded. Suppose that the maximum occurs at $x_0 \in \Gamma^-$. Let $\xi$ be a unit tangent vector to $\Gamma^-$ at $x_0$, and let $\pi$ be the 2-plane spanned by $\xi$ and $\nu$, the unit normal to $\Gamma^-$. The intersection of $\pi$ with $\Gamma^-$ is a curve denoted by $\sigma(s)$, passing through $x_0$ with tangent vector $\xi$ at $x_0$, where $s$ is an arc-length parameter. From the Neumann boundary condition, we have
\begin{equation}\label{NeumannRewritten}
u_\nu (\sigma(s)) = \chi(\sigma(s), u(\sigma(s))) \, .
\end{equation}
Differentiating \eqref{NeumannRewritten} twice in $s$ gives
\begin{gather*}
u_{\dot{\nu}} + u_{\nu\dot{\sigma}} = \chi_u u_{\dot{\sigma}} + D_x \chi\cdot \dot{\sigma}(s) \, , \\
u_{\ddot{\nu}} + 2u_{\dot{\nu}\dot{\sigma}} + u_{\nu\dot{\sigma}\dot{\sigma}} + u_{\nu\ddot{\sigma}} = u_{\dot{\sigma}} (D_x \chi_u \cdot \dot{\sigma}) + \chi_{uu}(u_{\dot{\sigma}})^2 + \chi_u u_{\ddot{\sigma}} + \chi_u u_{\dot{\sigma}\dot{\sigma}} \\
\qquad \qquad \qquad \qquad \qquad \qquad \qquad + \dot{\sigma}\cdot D^2_{xx}\chi\dot{\sigma} + (D_x \chi_{u}\cdot\dot{\sigma})u_{\dot{\sigma}} + D_x \chi\cdot\ddot{\sigma} \, . \nonumber
\end{gather*}
Rewriting these using the definition of the Frenet frame, we have
\begin{gather}
\kappa_\xi u_{\xi} + u_{\nu\xi} = \chi_u u_{\xi} + D_x \chi\cdot\xi \, , \nonumber \\
u_{\ddot{\nu}} + 2\kappa_\xi u_{\xi\xi} + u_{\nu\xi\xi} - \kappa_\xi u_{\nu\nu} = u_\xi D_x \chi_u \cdot \xi + \chi_{uu}(u_\xi)^2 - \kappa_\xi \chi_u u_\nu + \chi_u u_{\xi\xi} \label{FrenetU} \\
\qquad \qquad \qquad \qquad \qquad \qquad \qquad + \xi\cdot D^2_{xx}\chi\xi + (D_x \chi_{u}\cdot\xi)u_{\xi} - \kappa_\xi D_x \chi\cdot\nu \, , \nonumber
\end{gather}
where $\kappa_\xi > 0$ is the normal curvature of $\Gamma^-$ in the $\xi$ direction. From \eqref{FrenetU},
\begin{equation}\label{unuxixi}
u_{\nu\xi\xi} = \kappa_\xi u_{\nu\nu} + (\chi_u - 2\kappa_\xi) u_{\xi\xi} + O(1) \, .
\end{equation}
On the other hand, since $w$ has a maximum at $x_0$, we have
\begin{gather}
0 \geq {} \d_\nu w(x_0) \stackrel{\eqref{C2 wi}}{=} \frac{u_\nu u_{\xi\xi}}{(M-u)^2} + \frac{u_{\nu\xi\xi}}{M-u} + a_{k,\nu}u_k + a_ku_{k\nu} + b_\nu + 2M\inprod{x}{\nu} \nonumber \\
\implies u_{\nu\xi\xi} \leq -\frac{u_\nu u_{\xi\xi}}{M-u} - (M-u)(a_{k,\nu}u_k + a_ku_{k\nu} + b_\nu + 2M\inprod{x}{\nu}) \, . \label{unuxixi<}
\end{gather}
Combining \eqref{unuxixi} and \eqref{unuxixi<} and noting that $a_k = 0$ and $b_\nu = O(1)$ on $\d\Omega$ if $\xi\perp\nu$, we obtain \[
\left( \chi_u - 2\kappa_\xi + \frac{\chi(x_0)}{M-u} \right)u_{\xi\xi} + \kappa_\xi u_{\nu\nu} \leq O(1) \, .
\] This works as an estimate provided the coefficient of $u_{\xi\xi}$ is positive. If $\chi$ is negative somewhere on $\Gamma^-$, then this is true for any $2\kappa_\xi < \gamma_0$ provided that we choose $M$ large enough that \[
2\kappa_\xi < \gamma_0 + \min_{\Gamma^-}\frac{\gamma_0 u + \phi}{M - u} \leq \frac{\gamma_0 M + \phi(x_0)}{M - u} < \gamma_0
\] holds in addition to the condition \eqref{MBound} already imposed. On the other hand, if $\chi \geq 0$ on $\Gamma^-$ then we need only require \eqref{MBound} and \[
2\kappa_\xi < \gamma_0 + \min_{\Gamma^-}\frac{\gamma_0 u + \phi}{M-u} \, .
\] Hence $u_{\xi\xi}$ is bounded if \[
2\kappa_\xi < \gamma_0 + \max \left\{ 0 , \min_{\Gamma^-}\frac{\gamma_0 u + \phi}{M-u} \right\} \, .
\] This concludes the proof of Proposition \ref{th-C2}. \qed

\begin{remark}
	Let $g$ be as in Lemma \ref{lem-G} and suppose $\gamma_0 = 0$. Then proceeding as above,
	\begin{equation*}
	0 \geq \d_\nu w(x_0) = g' u_\nu u_{\xi\xi} + g u_{\nu\xi\xi} + a_{k\nu}u_k + a_ku_{k\nu} + b_\nu + 2M\inprod{x}{\nu} \, ,
	\end{equation*}
	and thus
	\begin{gather*}
	\kappa_\xi u_{\nu\nu} + (\chi_u - 2\kappa_\xi) u_{\xi\xi} + O(1) = u_{\nu\xi\xi} \leq -\frac{g'}{g}{u_\nu u_{\xi\xi}} - \frac{1}{g}(a_{k\nu}u_k + a_ku_{k\nu} + b_\nu + 2M\inprod{x}{\nu}) \\
	\implies \kappa_\xi u_{\nu\nu} + \left(\chi_u - 2\kappa_\xi + \frac{g'}{g}\chi\right) u_{\xi\xi} \leq - \frac{1}{g}(a_{k\nu}u_k + a_ku_{k\nu} + b_\nu + 2M\inprod{x}{\nu}) + O(1+\gamma_0) \, .
	\end{gather*}
	In other words, if there is $g$ satisfying (i)-(iii) in Lemma \ref{lem-G} and \[
	2\kappa_\xi < \frac{\left( \chi g \right)_u}{g}  \, , 
	\] then one has the desired estimate for $u_{\xi\xi}$ when $\gamma_0 = 0$. 
\end{remark}

\begin{remark}
	If the maximum of $w$ occurs on $\Gamma^+$ then we can estimate the second-order derivatives in the standard way, see \cite{CNS-1} or Chapter 17 of \cite{GilbargTrudinger}.
\end{remark}


\subsection{Explicit Solutions and a Counterexample}\label{sec-explicit}

In this section we investigate some explicit solutions of \eqref{TheEquation} and demonstrate a few of their properties. In the first part we derive radial solutions of the equation in a setting where $\Gamma^\pm$ are concentric hyperspheres, and use these to construct a sequence of solutions for which the second normal derivative grows without bound. These solutions are small perturbations of the function $f(r), r=|x|$, where $f$ solves $f'(r) = \sqrt{r^2 - R_-^2}$ for $r > R_- > 0$, and $f'(r)=0$ for $0 \leq r < R_-$, where $R_-$ is a constant.

In the second part we consider non-radially symmetric domains and investigate some possible behaviours of $u_\nu$ and $\phi$. In particular we demonstrate that the normal derivative on the inner boundary may be zero or even negative at some points on $\Gamma^-$, justifying our caution over the sign of $\chi$ at the end of the proof of Proposition \ref{th-C2}. Throughout this section we assume $\psi(x) \equiv \psi$ to be a constant function. Except where stated, we will also work in $n=2$ dimensions.

\subsubsection{Radial Solutions in the Concentric Setting}

Consider the 2-dimensional Monge-Amp\`ere equation with $\psi(x) \equiv \psi$ on the domain $\Omega = B_{R_+}\setminus\overline{B_{R_-}}$, where $R_+ > R_-$.

\begin{proposition}
	In $2$ dimensions, if $u \in C^2(\overline{\Omega})$ is a solution of $\det[D^2 u] = \psi^2$ on a radially symmetric planar domain $\Omega$, then in polar coordinates $(r,\theta)$, $u$ satisfies \[
	\frac{u_{rr}u_r}{r} + \frac{u_{rr}u_{\theta\theta}}{r^2} - \frac{1}{r^2} \left( u_{r\theta} - \frac{u_\theta}{r} \right)^2 = \psi^2 \, .
	\] In particular, if $u = u(r)$ is radial, then $u$ satisfies
	\begin{equation}\label{RadialMongeAmpere}
	\frac{u_{rr}u_r}{r} = \psi^2 \, .
	\end{equation}
\end{proposition}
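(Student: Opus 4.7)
The plan is to reduce the Monge-Amp\`ere determinant in polar coordinates to a direct chain-rule computation, and then exploit the rotational invariance of $\det[D^2 u]$ to collapse the calculation to a single convenient point. Throughout, with $x = r\cos\theta$ and $y = r\sin\theta$, I will use the polar expressions for the Cartesian partial derivation operators,
\[
\d_x = \cos\theta\,\d_r - \tfrac{\sin\theta}{r}\d_\theta, \qquad \d_y = \sin\theta\,\d_r + \tfrac{\cos\theta}{r}\d_\theta.
\]

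First, since $\det[D^2 u]$ is invariant under rotations of the Euclidean frame, it suffices to verify the identity at a generic point lying on the positive $x$-axis, i.e.\ a point where $\theta = 0$. At such a point the operators collapse to $\d_x = \d_r$ and $\d_y = \tfrac{1}{r}\d_\theta$, so $u_x = u_r$ and $u_y = u_\theta/r$ immediately.

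Next, I would compute the three independent entries of the Hessian at $\theta = 0$ by differentiating the general (valid for all $\theta$) expressions $u_x = u_r\cos\theta - \tfrac{u_\theta}{r}\sin\theta$ and $u_y = u_r\sin\theta + \tfrac{u_\theta}{r}\cos\theta$ once more, and only then setting $\theta = 0$. A short but careful computation gives
\begin{align*}
u_{xx} &= u_{rr}, \\
u_{yy} &= \frac{u_r}{r} + \frac{u_{\theta\theta}}{r^2}, \\
u_{xy} &= \frac{u_{r\theta}}{r} - \frac{u_\theta}{r^2}.
\end{align*}
Substituting into $u_{xx}u_{yy} - u_{xy}^2 = \psi^2$ then yields the stated identity. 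The radial case \eqref{RadialMongeAmpere} follows by specialising $u = u(r)$, which kills $u_\theta$, $u_{\theta\theta}$ and $u_{r\theta}$, leaving only $u_{rr}u_r/r = \psi^2$.

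The only real obstacle is bookkeeping in the second differentiation: one has to make sure that cross-terms such as $\d_\theta(u_r \sin\theta)|_{\theta = 0} = u_r$ and $\d_\theta(u_\theta \cos\theta/r)|_{\theta = 0} = u_{\theta\theta}/r$ are not dropped prematurely by setting $\theta = 0$ before differentiating. There is no conceptual difficulty; rotational invariance does most of the work, replacing an otherwise tedious global polar-Hessian calculation with a one-point evaluation.
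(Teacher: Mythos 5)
Your proposal is correct, and it reaches the same polar-Hessian entries as the paper, but by a genuinely shorter route. The paper's proof writes out the full Cartesian Hessian components $u_{xx}$, $u_{yy}$, $u_{xy}$ in terms of $u_r, u_\theta, u_{rr}, u_{r\theta}, u_{\theta\theta}$ at a \emph{general} angle $\theta$ and then evaluates the determinant directly, which it describes as ``simple but lengthy''; you instead invoke the invariance of $\det[D^2u]$ under rotation of the Cartesian frame to reduce everything to a point with $\theta=0$, where the Hessian entries collapse to $u_{xx}=u_{rr}$, $u_{yy}=u_r/r+u_{\theta\theta}/r^2$, $u_{xy}=u_{r\theta}/r-u_\theta/r^2$, and the determinant identity is immediate. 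Your intermediate values agree with the paper's general formulas specialised to $\theta=0$, and you correctly flag the one genuine pitfall (differentiate first, set $\theta=0$ afterwards, so that terms like $\partial_\theta(u_r\sin\theta)|_{\theta=0}=u_r$ survive). The only thing worth stating explicitly is what the invariance reduction really uses: for a rotation $R$ by $-\theta_0$ and $v=u\circ R^{-1}$, one has both $\det[D^2v](Rp)=\det[D^2u](p)$ \emph{and} that all polar derivatives of $v$ at the rotated point coincide with those of $u$ at $p$ (rotation is just a shift in $\theta$), so verifying the identity at $\theta=0$ for arbitrary $C^2$ functions does give it everywhere; this is routine but should be said, since the right-hand side is not literally ``rotation-invariant'' as an expression, it transforms by the $\theta$-shift. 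The trade-off between the two proofs is minor: the paper's computation also records the full polar Hessian for all $\theta$ (not used elsewhere), while yours buys brevity at the cost of the one-line invariance justification. The radial specialisation is handled identically in both.
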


\begin{proof}
	We perform a standard change of variables to derive the Hessian matrix in polar coordinates, and then take its determinant. Setting $x = r\cos\theta$, $y = r\sin\theta$,
	\begin{gather*}
	\frac{\d^2 u}{\d x^2} = u_{rr}\cos^2\theta + u_r \frac{\sin^2\theta}{r} - u_{r\theta}\frac{2\sin\theta\cos\theta}{r} + u_{\theta\theta}\frac{\sin^2\theta}{r^2} + u_\theta \frac{2\sin\theta\cos\theta}{r^2} \, , \\
	\frac{\d^2 u}{\d y^2} = u_{rr}\sin^2\theta + u_r \frac{\cos^2\theta}{r} + u_{r\theta}\frac{2\sin\theta\cos\theta}{r} + u_{\theta\theta}\frac{\cos^2\theta}{r^2} - u_\theta \frac{2\sin\theta\cos\theta}{r^2} \, , \\
	\frac{\d^2 u}{\d x \d y} = u_{rr}\sin\theta\cos\theta - u_r \frac{\sin\theta\cos\theta}{r} + u_{r\theta}\frac{\cos^2\theta - \sin^2\theta}{r} - u_{\theta\theta}\frac{\sin\theta\cos\theta}{r^2} + u_\theta \frac{\sin^2\theta - \cos^2\theta}{r^2} \, .
	\end{gather*}
	The determinant is now a simple but lengthy calculation from these expressions.
\end{proof}

From now on we will focus purely on solving \eqref{RadialMongeAmpere} for various values of $u_r(R_-)$, the normal derivative on the inner boundary $\Gamma^-$. By defining $w(r) = u_r$ and denoting differentiation in $r$ by an apostrophe, \eqref{RadialMongeAmpere} becomes the separable equation
\begin{gather*}
\frac{w'w}{r} = \psi^2 \implies w(r)^2 - w(R_-)^2 = \psi^2(r^2 - R_-^2) \\
\implies u'(r) = w(r) = \psi\sqrt{r^2 - R_-^2 + \frac{u'(R_-)^2}{\psi^2}} \, .
\end{gather*}
Here, we temporarily ignore the Neumann boundary condition on $\Gamma^-$ from $\eqref{TheEquation}$ and simply take $u_\nu = u'(R_-)$ to be some constant determining the slope of the solution at $\Gamma^-$. We will eventually rectify this by making a particular choice of the function $\phi$.

\begin{proposition}
	The solutions of \eqref{RadialMongeAmpere} for a particular choice of $u'(R_-)$ are given by \[
	u(r) = \frac{\psi}{2} r \sqrt{r^2 - R_-^2 + \frac{u'(R_-)^2}{\psi^2}} - \frac{\psi}{2} \left( R_-^2 - \frac{u'(R_-)^2}{\psi^2} \right) \ln \left( r + \sqrt{r^2 - R_-^2 + \frac{u'(R_-)^2}{\psi^2} } \right) + C \, ,
	\] where $C$ is the constant chosen so that $u(R_+) = 0$, namely \[
	C = -\frac{\psi}{2} R_+ \sqrt{R_+^2 - R_-^2 + \frac{u'(R_-)^2}{\psi^2}} + \frac{\psi}{2} \left( R_-^2 - \frac{u'(R_-)^2}{\psi^2} \right) \ln \left( R_+ + \sqrt{R_+^2 - R_-^2 + \frac{u'(R_-)^2}{\psi^2} } \right) \, .
	\]
\end{proposition}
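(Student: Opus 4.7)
The plan is to treat the identity $\frac{u_{rr}u_r}{r} = \psi^2$ as an ODE for $u(r)$ and integrate it twice, using the prescribed value of $u'(R_-)$ at the first step and the Dirichlet condition $u(R_+)=0$ at the second. This is a purely computational proposition, so the ``hard part'' is really only making sure the correct branch of the square root is chosen and that the antiderivative is clean.

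First, I would set $w(r) \eqdef u_r(r)$ and rewrite the equation as $w\,w' = \psi^2 r$, which is manifestly exact: $\tfrac{d}{dr}\bigl(\tfrac{1}{2}w^2\bigr) = \psi^2 r$. Integrating from $R_-$ to $r$ and invoking the prescribed value $w(R_-) = u'(R_-)$ gives
\[
w(r)^2 \;=\; \psi^2(r^2 - R_-^2) + u'(R_-)^2 \;=\; \psi^2\!\left(r^2 - R_-^2 + \frac{u'(R_-)^2}{\psi^2}\right).
\]
Since we are seeking the strictly convex solution on $\Omega$, $w = u_r$ is nonnegative and increasing in $r$, so I would extract the positive square root to obtain $u_r(r) = \psi\sqrt{r^2 - R_-^2 + u'(R_-)^2/\psi^2}$, consistent with the expression quoted in the proposition.

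Second, I would integrate once more. Writing $\beta \eqdef R_-^2 - u'(R_-)^2/\psi^2$, a direct differentiation verifies the classical antiderivative
\[
\int \sqrt{r^2 - \beta}\; dr \;=\; \frac{r}{2}\sqrt{r^2 - \beta} \;-\; \frac{\beta}{2}\ln\!\left( r + \sqrt{r^2 - \beta}\right) + \text{const}.
\]
Multiplying by $\psi$ and substituting the definition of $\beta$ back in yields precisely the displayed formula for $u(r)$, modulo the integration constant $C$. Finally, imposing the outer Dirichlet condition $u(R_+)=0$ from \eqref{TheEquation} pins down $C$ as the asserted value, completing the derivation. No estimates or auxiliary constructions are needed; the only subtle point worth flagging is the sign choice for the square root, which is forced by strict convexity.
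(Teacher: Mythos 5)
Your proposal is correct and follows essentially the same route as the paper: the first integration (separating $w\,w' = \psi^2 r$ and using $w(R_-) = u'(R_-)$) is exactly what the paper does in the discussion preceding the proposition, and the paper's proof of the proposition itself simply verifies the stated antiderivative by differentiating $u_r$ and $u_{rr}$ and substituting into \eqref{RadialMongeAmpere}, which is your second step run in reverse. Your remark that strict convexity forces the positive root is also sound, since for a radial function in two dimensions the Hessian eigenvalues are $u_{rr}$ and $u_r/r$, so $u_r > 0$ on the annulus.
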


\begin{proof}
	We calculate \[
	u_r = \psi \sqrt{r^2 - R_-^2 + \frac{u'(R_-)^2}{\psi^2}} \, , \quad u_{rr} = \frac{\psi r}{\sqrt{r^2 - R_-^2 + \frac{u'(R_-)^2}{\psi^2}}} \, ,
	\] and the result follows directly by substitution into \eqref{RadialMongeAmpere}.
\end{proof}

\begin{remark}
	When $u'(R_-) = \psi R_-$, $u$ reduces to the quadratic solution \[
	u(r) = \frac{\psi}{2}r^2 - \frac{\psi}{2}R_+^2 \, .
	\]
\end{remark}

\subsubsection{A sequence of solutions for which the second normal derivative grows to $\infty$}

Let $d_k$ be a sequence of positive real numbers tending to 0. Define
\begin{align*}
\phi_k \eqdef {} & d_k - \frac{\gamma_0}{2} \left[ R_- d_k - \psi\left( R_-^2 - \frac{d_k^2}{\psi^2} \right) \ln  \left( R_- + \frac{d_k}{\psi} \right) \right] \\
& + \frac{\psi\gamma_0}{2} \left[ R_+ \sqrt{R_+^2 - R_-^2 + \frac{d_k^2}{\psi^2}} - \left( R_-^2 - \frac{d_k^2}{\psi^2} \right) \ln \left( R_+ + \sqrt{R_+^2 - R_-^2 + \frac{d_k^2}{\psi^2} } \right) \right] \, .
\end{align*}
This choice of $\phi$ is continuous and increasing in $d_k$ (and thus decreasing in $k$). If $R_+ = \mu R_-$, $\mu\in(1,\infty)$, then
\begin{equation}\label{phiklowerbound}
\lim_{k\rightarrow\infty} \phi_k = \phi^\psi_\infty \eqdef \frac{\psi\gamma_0}{2} \left[ \mu\sqrt{\mu^2 - 1} - \ln(\mu + \sqrt{\mu^2 - 1}) \right] R_-^2 \, .
\end{equation}
Note that instead of defining $\phi_k$ in terms of the sequence $d_k$, we could equivalently pick a decreasing sequence of real numbers $\phi_k$ converging to the positive lower bound depending on $\psi,\gamma_0,R_+$ and $R_-$ given in \eqref{phiklowerbound} and determine the sequence $d_k$ from the formula above.

\begin{remark}
	It will be important later on to note the dependence of $\phi^\psi_\infty$ on $\psi$, in particular that $\phi^\psi_\infty$ is increasing in $\psi$.
\end{remark}

\begin{proposition}\label{Soln}
	The strictly convex radial solution $u^{(k)}$ of
	\begin{equation}\label{RadialEquation}
	\begin{cases}
	\det[D^2 u^{(k)}] = \psi^2  & \text{ in } \Omega = B_{R_+}\setminus \overline{B_{R_-}} \, , \\
	u^{(k)} = 0  & \text{ on } \Gamma^+ = \d B_{R_+} \, , \\
	u^{(k)}_\nu = \gamma_0 u^{(k)} + \phi_k & \text{ on } \Gamma^- = \d B_{R_-} \, ,
	\end{cases}
	\end{equation}
	is given by
	\begin{gather}
	u^{(k)} (x) = \frac{\psi}{2} \left[ |x| \sqrt{|x|^2 - R_-^2 + \frac{d_k^2}{\psi^2}} - \left( R_-^2 - \frac{d_k^2}{\psi^2} \right) \ln \left( |x| + \sqrt{|x|^2 - R_-^2 + \frac{d_k^2}{\psi^2} } \right) \right. \qquad \qquad \label{u(k)(x)} \\
	\qquad \qquad \left. - R_+ \sqrt{R_+^2 - R_-^2 + \frac{d_k^2}{\psi^2}} + \left( R_-^2 - \frac{d_k^2}{\psi^2} \right) \ln \left( R_+ + \sqrt{R_+^2 - R_-^2 + \frac{d_k^2}{\psi^2} } \right) \right] \, . \nonumber
	\end{gather}
\end{proposition}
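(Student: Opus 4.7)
The plan is to verify Proposition \ref{Soln} by direct substitution, exploiting the fact that $u^{(k)}$ is manifestly radial. Since the function in \eqref{u(k)(x)} depends only on $r = |x|$, by the earlier radial reduction it suffices to check that $u^{(k)}(r)$ satisfies the ODE \eqref{RadialMongeAmpere} together with the two boundary conditions in \eqref{RadialEquation}.

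First I would differentiate term-by-term in $r$. The two nonconstant terms combine cleanly: after the factor $(r + \sqrt{r^2 - R_-^2 + d_k^2/\psi^2})$ in the denominator of the logarithmic derivative cancels with a matching factor in the numerator, one obtains
\[
u^{(k)}_r(r) = \psi\sqrt{r^2 - R_-^2 + d_k^2/\psi^2} \, ,
\]
and hence $u^{(k)}_{rr}(r) = \psi r / \sqrt{r^2 - R_-^2 + d_k^2/\psi^2}$. Substituting these into \eqref{RadialMongeAmpere} yields $u^{(k)}_{rr} u^{(k)}_r / r = \psi^2$, confirming the Monge-Amp\`ere equation. Strict convexity follows at once, since $d_k > 0$ keeps the radicand strictly positive on $[R_-, R_+]$, so both $u^{(k)}_{rr}$ and $u^{(k)}_r/r$ are positive.

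For the Dirichlet condition, inspection of \eqref{u(k)(x)} shows that at $r = R_+$ the first two bracketed terms are exactly the negatives of the last two, giving $u^{(k)}(R_+) = 0$. For the Neumann condition at $r = R_-$ the calculation above already yields $u^{(k)}_\nu(R_-) = u^{(k)}_r(R_-) = d_k$, so the only remaining task is to verify $d_k = \gamma_0 u^{(k)}(R_-) + \phi_k$. This is where the delicate definition of $\phi_k$ earns its keep: evaluating \eqref{u(k)(x)} at $r = R_-$ uses $\sqrt{r^2 - R_-^2 + d_k^2/\psi^2}\big|_{r=R_-} = d_k/\psi$, and $\gamma_0 u^{(k)}(R_-)$ then consists of precisely the terms that appear with opposite signs in the formula defining $\phi_k$; upon summation they cancel to leave $d_k$ exactly.

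No step presents a real obstacle; the argument is essentially bookkeeping, with the only mild subtlety being the grouping of square-root and logarithmic contributions in computing $u^{(k)}_r$. The substantive content of the proposition lies not in the verification itself but in the observation that $\phi_k$ has been reverse-engineered from the prescribed normal slope $d_k$ so as to make \eqref{RadialEquation} consistent, which is precisely what enables the subsequent construction of a sequence with unbounded second normal derivative on $\Gamma^-$.
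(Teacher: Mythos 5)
Your proof is correct, and it is the same direct-verification strategy as the paper's: check the Dirichlet condition by construction, check that $u^{(k)}_\nu(R_-)=d_k$ and that $\gamma_0 u^{(k)}(R_-)=d_k-\phi_k$ by evaluating the formula at $|x|=R_-$ (where the radicand reduces to $d_k/\psi$), and verify the equation and strict convexity by differentiation. The one point where you diverge mechanically is the verification of the PDE and of convexity: the paper writes out the full Cartesian Hessian $D^2u^{(k)}$ as an explicit $2\times 2$ matrix, checks $\det[D^2u^{(k)}]=\psi^2$ directly, and gets positive definiteness from the positivity of the trace together with the determinant; you instead stay in the radial variable, invoking the earlier reduction to the ODE $u_{rr}u_r/r=\psi^2$ and arguing strict convexity from positivity of $u_{rr}$ and $u_r/r$. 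This is legitimate — the polar computation in the preceding proposition is an identity for radial $C^2$ functions, so solving the ODE is equivalent to solving the PDE, and $u_{rr}$, $u_r/r$ are exactly the eigenvalues of the Hessian of a radial function (a fact the paper records only later, in the $n$-dimensional proposition via $u_{ij}=\frac{u_r}{r}(\delta_{ij}-\frac{x_ix_j}{r^2})+u_{rr}\frac{x_ix_j}{r^2}$); you should cite that formula, or note the equivalence explicitly, to make the convexity step self-contained. Your version is a bit leaner; the paper's explicit Hessian buys nothing extra beyond making positive definiteness and continuity of second derivatives on $\overline{\Omega}$ visible at a glance.
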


\begin{proof}
	First, we check the boundary conditions. By construction $u^{(k)}$ satisfies the Dirichlet condition on $\Gamma^+$. To check the Neumann condition, we define $D_k \eqdef -R_-^2 + d_k^2/\psi^2$ and differentiate to find
	\begin{gather*}
	D u^{(k)} (x) = \psi \sqrt{|x|^2 + D_k} \frac{x}{|x|} \, , \\
	u^{(k)}_\nu |_{|x|=R_-} = \frac{x}{R_-} \cdot \psi \sqrt{|x|^2 + D_k} \frac{x}{|x|} |_{|x|=R_-} = d_k \, .
	\end{gather*}
	Note that $|x|^2 + D_k > d_k^2 /\psi^2 > 0$ for all $k$ since $|x| > R_-$. We also have on $\Gamma^-$,
	\begin{align*}
	\gamma_0 u^{(k)}|_{|x|=R_-} = {} & \frac{\psi \gamma_0}{2} \left[ R_- \frac{d_k}{\psi} - \left( R_-^2 - \frac{d_k^2}{\psi^2} \right) \ln \left( R_- + \frac{d_k}{\psi} \right) \right. \\
	& \left. - R_+ \sqrt{R_+^2 - R_-^2 + \frac{d_k^2}{\psi^2}} + \left( R_-^2 - \frac{d_k^2}{\psi^2} \right) \ln \left( R_+ + \sqrt{R_+^2 - R_-^2 + \frac{d_k^2}{\psi^2} } \right) \right] = d_k - \phi_k \, ,
	\end{align*}
	and thus the Neumann condition holds.
	
	Second, we check that the Hessian equation itself is satisfied. We have \[
	D^2 u^{(k)} = \frac{\psi}{|x|^3\sqrt{|x|^2 + D_k}} \left( \begin{array}{cc}
	|x|^4 + D_k y^2 & - D_k xy \\
	- D_k xy & |x|^4 + D_k x^2
	\end{array} \right) \, ,
	\] and so $\det[D^2 u^{(k)}] = \psi^2$ as required. Note that moreover \[
	\tr[D^2 u^{(k)}] = \frac{\psi}{|x| \sqrt{|x|^2 + D_k}}  \left( 2|x|^2 + D_k \right) \geq \frac{\psi}{|x| \sqrt{|x|^2 + D_k}} \left( R_-^2 + \frac{d_k^2}{\psi^2} \right) > 0 \, ,
	\] so $D^2 u^{(k)}$ is positive definite on $\overline{\Omega}$ (meaning $u^{(k)}$ is strictly convex), and all the second derivatives are continuous on $\overline{\Omega}$ for $d_k > 0$.
\end{proof}

Proposition \ref{Soln} tells us that the $u^{(k)}$ are perfectly good solutions of \eqref{TheEquation} on this domain for these choices of $\phi$. However, we now calculate their second normal derivative on $\Gamma^-$. We find
\begin{gather*}
u_{\nu\nu} = \psi\frac{x}{R_-^2} \cdot D (|x|\sqrt{|x|^2 + D_k}) = \frac{\psi}{R_-^2} \frac{2|x|^2 + D_k}{\sqrt{|x|^2 + D_k}} |x| \\
\implies u_{\nu\nu} |_{|x|=R_-} = \frac{\psi^2 R_-}{d_k} + \frac{d_k}{R_-} \, .
\end{gather*}
This is unbounded as $d_k \longrightarrow 0$, showing that in general it is not possible to bound the second normal derivative of solutions on $\Gamma^-$.

\begin{remark}
	This example also shows that for $\phi < \phi^\psi_\infty$, \eqref{RadialEquation} has no strictly convex solution $u \in C^{2,\alpha}$.
\end{remark}

\smallskip \smallskip \smallskip \smallskip
A similar argument can be made for radial solutions in the $n$-dimensional setting.

\begin{proposition}
	If $u\in C^2(\overline{\Omega})$ is a radial solution of $\det[D^2 u] = \psi^n$ on an $n$-dimensional radially symmetric domain $\Omega$, then
	\begin{equation}\label{RadialnMongeAmpere}
	u_{rr} \left( \frac{u_r}{r} \right)^{n-1} = \psi^n \, .
	\end{equation}
\end{proposition}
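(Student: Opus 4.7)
The proof is a direct computation of the determinant of the Hessian of a radial function, generalizing the $n=2$ case shown earlier. My plan is to avoid the laborious change of variables used in the 2D proposition and instead exploit the algebraic structure of the Hessian of a radial function in Cartesian coordinates.

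First I would write the Hessian of $u(r)$ with $r=|x|$ in the form
\begin{equation*}
u_{ij} = u''(r)\,\frac{x_i x_j}{r^2} + \frac{u'(r)}{r}\left(\delta_{ij} - \frac{x_i x_j}{r^2}\right),
\end{equation*}
which follows directly from $u_i = u'(r)\,x_i/r$ and one further differentiation. This exhibits $D^2 u$ as a linear combination of the two commuting projection-type matrices $P = xx^T/r^2$ and $I - P$, where $P$ is the rank-one orthogonal projection onto the radial direction and $I-P$ is the projection onto its $(n-1)$-dimensional orthogonal complement.

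Consequently, $D^2 u$ is diagonalized by any orthonormal frame whose first vector is $x/|x|$: the radial direction is an eigenvector with eigenvalue $u''(r)$, and every vector tangent to the sphere $\{|x|=r\}$ is an eigenvector with eigenvalue $u'(r)/r$ (of multiplicity $n-1$). Taking the determinant as the product of eigenvalues yields
\begin{equation*}
\det[D^2 u] = u''(r)\left(\frac{u'(r)}{r}\right)^{n-1},
\end{equation*}
and combining with the Monge-Amp\`ere equation $\det[D^2 u] = \psi^n$ gives \eqref{RadialnMongeAmpere}.

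There is no genuine obstacle here; the only potential pitfall is correctly identifying the multiplicities of the eigenvalues, and noting that strict convexity of $u$ (which is the setting of interest earlier in the paper) requires both $u''(r)>0$ and $u'(r)/r > 0$, so the formula is consistent in sign with $\psi>0$. For $n=2$ this recovers $u_{rr}u_r/r = \psi^2$ as in \eqref{RadialMongeAmpere}, confirming the computation.
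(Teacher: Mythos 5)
Your proof is correct and takes essentially the same approach as the paper: both start from the Hessian formula $u_{ij} = \frac{u_r}{r}\left(\delta_{ij} - \frac{x_i x_j}{r^2}\right) + u_{rr}\frac{x_i x_j}{r^2}$ and exploit its radial-plus-rank-one structure to evaluate the determinant. The paper finishes by citing Sylvester's Determinant Theorem, while you identify the eigenvalues directly ($u_{rr}$ in the radial direction and $u_r/r$ with multiplicity $n-1$ tangentially); this is only a cosmetic difference in how the determinant of the same structured matrix is computed.
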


\begin{proof}
	Suppose $u = u(r)$, $r=|x|$. Then
	\begin{gather*}
	u_i = u_r \frac{x_i}{r} \, , \\
	u_{ij} = \frac{u_r}{r} \left( \delta_{ij} - \frac{x_i x_j}{r^2} \right) + u_{rr} \frac{x_i x_j}{r^2} \, .
	\end{gather*}
	Sylvester's Determinant Theorem then gives the required result.
\end{proof}

\begin{proposition}
	There exists a sequence of strictly convex, radially symmetric solutions of \eqref{RadialnMongeAmpere} for which the second normal derivative grows without bound.
\end{proposition}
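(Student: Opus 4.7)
The plan is to mimic the two-dimensional construction of Proposition \ref{Soln} one dimension at a time. Separating variables in \eqref{RadialnMongeAmpere} gives $(u_r)^{n-1} u_{rr} = \psi^n r^{n-1}$, which integrates to $(u_r(r))^n = \psi^n r^n + C$. Given a sequence $d_k \to 0^+$, I impose $u_r(R_-) = d_k$, so $C = d_k^n - \psi^n R_-^n$ and
\begin{equation*}
u^{(k)}_r(r) = \bigl(\psi^n (r^n - R_-^n) + d_k^n\bigr)^{1/n} > 0 \quad \text{on } [R_-, R_+].
\end{equation*}
Integrating once more and choosing the constant of integration so that the Dirichlet condition $u^{(k)}(R_+) = 0$ is satisfied yields
\begin{equation*}
u^{(k)}(r) = -\int_r^{R_+} \bigl(\psi^n(t^n - R_-^n) + d_k^n\bigr)^{1/n}\, dt,
\end{equation*}
which for $n \geq 3$ is not elementary but is perfectly well-defined and continuous in $d_k$. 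The Neumann data is then forced by the construction: simply set $\phi_k \eqdef d_k - \gamma_0 u^{(k)}(R_-)$, a finite real number for each $d_k > 0$.

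Strict convexity is immediate from the radial form of the Hessian: for a radial function $u(r)$, the matrix $D^2 u$ has eigenvalues $u_{rr}$ (in the radial direction) and $u_r/r$ (with multiplicity $n-1$, in the tangential directions). Both are strictly positive on $\overline{\Omega}$, since $u^{(k)}_r(r) > 0$ and
\begin{equation*}
u^{(k)}_{rr}(r) = \frac{\psi^n r^{n-1}}{(u^{(k)}_r(r))^{n-1}} > 0.
\end{equation*}
Therefore each $u^{(k)}$ is a strictly convex, radially symmetric, $C^\infty$ solution of the corresponding problem \eqref{RadialnMongeAmpere} on $\Omega = B_{R_+} \setminus \overline{B_{R_-}}$.

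Finally, since $\nu$ on $\Gamma^- = \partial B_{R_-}$ is the outward radial direction (pointing into $\Omega$), the second normal derivative on $\Gamma^-$ reduces to $u^{(k)}_{rr}(R_-)$, and
\begin{equation*}
u^{(k)}_{\nu\nu}\bigl|_{r=R_-} = \frac{\psi^n R_-^{n-1}}{d_k^{n-1}} \longrightarrow \infty \quad \text{as } d_k \to 0^+,
\end{equation*}
giving the desired sequence. There is no real obstacle here beyond tracking the computation: the only qualitative difference from the $n=2$ case is that the antiderivative of $(\psi^n(r^n - R_-^n) + d_k^n)^{1/n}$ has no closed form for general $n$, but since we only need the values $u^{(k)}(R_-)$ (to define $\phi_k$) and $u^{(k)}_{rr}(R_-)$ (which we read off algebraically from the ODE), the lack of an explicit antiderivative is harmless.
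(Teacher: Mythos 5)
Your proposal is correct and follows essentially the same route as the paper: integrate the radial equation once with $u_r(R_-)=d_k$, read off $u_{rr}$ algebraically from the ODE, and conclude $u^{(k)}_{\nu\nu}(R_-)=\psi^n R_-^{n-1}/d_k^{n-1}\to\infty$. The only additions are the explicit integral form of $u^{(k)}$, the formula $\phi_k = d_k - \gamma_0 u^{(k)}(R_-)$, and the eigenvalue check of strict convexity, all of which the paper leaves implicit.
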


\begin{proof}
	We show the existence of such a sequence without constructing the functions $u^{(k)}$ explicitly as we did in the $n=2$ setting. Let $d_k$ be a sequence of positive real numbers converging to 0. Integrating \eqref{RadialnMongeAmpere} once and fixing the Neumann condition on $\Gamma^-$ such that $u^{(k)}_\nu(R_-) = u^{(k)}_r(R_-) = d_k$, we have \[
	u^{(k)}_r (r) = \sqrt[n]{\psi^n r^n - \psi^n R_-^n + d_k^n} \, .
	\] Differentiating once in $r$ (or substituting back into \eqref{RadialnMongeAmpere}) gives \[
	u^{(k)}_{\nu\nu} (r) = u^{(k)}_{rr} (r) = \frac{\psi r^{n-1}}{\left( r^n - R_-^n + (d_k/\psi)^n \right)^{1-\frac{1}{n}}} \, ,
	\] and thus \[
	u^{(k)}_{\nu\nu} (R_-) = \frac{\psi^n R_-^{n-1}}{d_k^{n-1}} \longrightarrow \infty \quad \text{ as } k \longrightarrow \infty \, . \qedhere
	\]
\end{proof}

\subsubsection{Solutions in the Skewed-Concentric Setting}

Let $R_+ > R_-$, and let $\gamma_-, \gamma_+$ be vectors satisfying $|\gamma_-| < R_-$, $\gamma_+ < R_+$ and $|\gamma_+ - \gamma_-| < R_+ - R_-$. Set
\begin{gather*}
\Gamma^+ = \d B_{R_+}(\gamma_+) \, , \quad \Gamma^- = \d B_{R_-}(\gamma_-) \, , \\
\Omega = B_{R_+}(\gamma_+) \setminus \overline{B_{R_-}(\gamma_-)} \, ,
\end{gather*}
so that $\gamma_-$ is the centre of the inner circle and $\gamma_+$ is the centre of the outer one. The conditions above ensure that both circles contain the origin and that $\Gamma^-$ is strictly contained within $\Gamma^+$.

Consider the quadratic solution of \eqref{TheEquation} on $\Omega$ given by \[
u(x) = \frac{\psi}{2} |x - \gamma_+|^2 - \frac{\psi}{2} R_+^2 \, .
\] For $x\in\Gamma^-$, $x = \gamma_- + R_- \nu$, where $\nu$ is the inward-pointing normal at the point $x$. Hence we have on $\Gamma^-$ \[
u_\nu = \nu\cdot \psi(x - \gamma_+) = \psi \left[ (\gamma_- - \gamma_+)\cdot\nu + R_- \right] \, .
\] We see from this expression that for certain combinations of $\gamma_-,\gamma_+$ and $R_-$, there will be points on $\Gamma^-$ where $u_\nu = 0$ or even $u_\nu < 0$. For instance, if \[
\gamma_- = \left( \frac{1}{4} , 0 \right) \, , \quad \gamma_+ = (1,0) \, , \quad R_- = \frac{1}{2} \, , \quad R_+ = 2 \, ,
\] then \[
u_\nu\left( \frac{3}{4} , 0 \right) = -\frac{\psi}{4} \, , \quad \text{ and } \quad u_\nu \left( \frac{7}{12} , \pm \frac{\sqrt{5}}{6} \right) = 0 \, .
\] Similarly, this $u$ is the solution for \[
\phi(x) = \psi \left( (\gamma_- - \gamma_+)\cdot\nu + R_- \right) + \frac{\psi\gamma_0}{2} \left( R_+^2 - |\gamma_- - \gamma_+ + R_- \nu|^2 \right) \, ,
\] so for small enough $\gamma_0,R_-$ and large enough $|\gamma_+ - \gamma_-|$ we will have points where $\phi$ is zero or negative. \\

There are also solutions in this skew-concentric setting corresponding to the non-quadratic solutions on the concentric domain. If we define
\begin{equation}\label{v(k)(x)}
v^{(k)}(x) = u^{(k)}(x-\gamma_+) \, ,
\end{equation}
with $u^{(k)}$ given by \eqref{u(k)(x)}, then $v^{(k)}$ is a solution of \eqref{TheEquation} provided $\Omega$ and $R_-$ are chosen such that $|x-\gamma_+| \geq R_-$ for any $x \in \Omega$. From these we can generate more examples demonstrating that the second normal derivative cannot be bounded in general.


\section{The Subsolution Condition and Existence for $\psi = \psi(x)$}\label{sec-ExtraConditions}

\subsection{The Double Normal Estimate in a Restricted Setting}\label{sec-normal}

In this section, we prove Theorem \ref{main-th-nor}, which states that under certain restrictions an estimate for $u_{\nu\nu}$ on $\Gamma^-$ can be obtained. We do this by estimating the mixed derivatives $u_{\xi\nu}$ from above and the tangential derivatives $u_{\xi\xi}$ from below on the inner boundary.

\begin{remark}
	On $\Gamma^-$, the  mixed tangential-normal derivatives are estimated directly from the Neumann condition:
	\begin{equation}\label{uxinu}
		|u_{\xi\nu}| = |\xi\cdot D (\gamma_0 u + \phi)| \leq \gamma_0 C_1 + \sup_{\Gamma^-} |D\phi| \eqdef C_3 \, .
	\end{equation}
\end{remark}

\subsubsection{Setup}\label{Setup}

Let $x_0 \in \Gamma^-$, and $\xi$ be a unit vector tangential to $\Gamma^-$ at $x_0$. Let $\gamma : (-a,a) \longrightarrow \Gamma^-$ be a unit-speed geodesic curve on $\Gamma^-$ passing through $x_0$ with the following properties:
\begin{enumerate}
	\item[(i)] $\gamma(0) = x_0$,
	\item[(ii)] $\gamma'(s)|_{s=0} = \xi$,
	\item[(iii)] $\gamma''(s)|_{s=0} = -\kappa_{\xi} \nu$.
\end{enumerate}
Here, $\kappa_\xi$ is the normal curvature of $\Gamma^-$ in the direction $\xi$. The existence of such a geodesic follows from standard ODE theory and the smoothness of $\Gamma^-$.

\begin{proposition}\label{Double Normal Estimate}
	Suppose $u\in C^4(\Omega)\cap C^3(\overline{\Omega})$ is a strictly convex solution of \eqref{TheEquation}, and that there exists a strictly convex subsolution $\underline{u} \in C^4(\Omega)\cap C^3(\overline{\Omega})$ satisfying
	\begin{equation*}
		\begin{cases}
			\det[D^2 \underline{u}] \geq \psi^n(x)  & \text{ in } \Omega \, , \\
			\underline{u} = 0  & \text{ on } \Gamma^+ \, , \\
			\underline{u}_\nu = \gamma_0 \underline{u} + \phi(x) & \text{ on } \Gamma^- \, ,
		\end{cases}
	\end{equation*}
	with the same $\phi$ as in \eqref{TheEquation}. For each $x_0\in\Gamma^-$ define the quantity $U(s) \eqdef u_\nu(\gamma(s)) - \underline{u}_\nu(\gamma(s))$ along the curve $\gamma(s)$ satisfying properties (i), (ii) and (iii). Suppose additionally that there exists $\tau$ (not depending on the choice of $\xi$, $\underline{u}$ or $x_0$) such that the differential inequality
	\begin{equation}
		\left( \frac{1}{\gamma_0} \frac{d^2}{ds^2} + \kappa_\xi \right) U(s) + \underline{u}_{\xi\xi} \geq \tau
	\end{equation}
	holds at $s=0$ for every $x_0\in\Gamma^-$. Then $u_{\nu\nu}$ is bounded with \[
	\sup_{\Gamma^-}|D_{\nu\nu} u| \leq C_4 \, .
	\]
\end{proposition}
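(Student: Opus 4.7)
My plan is to reduce the bound on $u_{\nu\nu}$ to a \emph{positive lower bound} on the tangential Hessian of $u$ on $\Gamma^-$, with the subsolution condition delivering exactly this lower bound. Since mixed tangential-normal derivatives are already controlled by \eqref{uxinu}, this reduction is a direct consequence of the Monge-Amp\`ere equation itself: at any $x_0\in\Gamma^-$, in an orthonormal frame $\{e_1,\dots,e_{n-1},\nu\}$ the block decomposition
\[
D^2u = \begin{pmatrix} A & b \\ b^T & u_{\nu\nu} \end{pmatrix}, \qquad A = (u_{e_\alpha e_\beta})_{\alpha,\beta=1}^{n-1}, \quad b = (u_{e_\alpha\nu})_{\alpha=1}^{n-1},
\]
combined with the Schur-complement identity $\psi^n = \det(A)\bigl(u_{\nu\nu}-b^T A^{-1}b\bigr)$ gives
\[
u_{\nu\nu} = \frac{\psi^n}{\det A} + b^T A^{-1} b.
\]
Thus, once I establish $A\geq \tau\,I$ as a positive matrix, I will immediately obtain $\det A\geq \tau^{n-1}$ and $b^T A^{-1} b \leq (n-1)C_3^2/\tau$, producing a bound of the form $C_4 \eqdef \|\psi\|_\infty^n/\tau^{n-1}+(n-1)C_3^2/\tau$.

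The heart of the proof is therefore to establish $u_{\xi\xi}(x_0)\geq \tau$ for every unit tangent $\xi$ at every $x_0\in\Gamma^-$. The critical observation is that since $u$ and $\underline{u}$ satisfy the \emph{same} Neumann identity $w_\nu = \gamma_0 w + \phi$ along $\Gamma^-$, the function $U$ of the statement factors as
\[
U(s) = \gamma_0\bigl(u(\gamma(s))-\underline{u}(\gamma(s))\bigr)
\]
on the geodesic $\gamma\subset\Gamma^-$. Differentiating this identity twice in $s$ and evaluating at $s=0$, with $\gamma'(0)=\xi$ and $\gamma''(0)=-\kappa_\xi\nu$ in hand, the chain rule yields
\[
\tfrac{1}{\gamma_0} U''(0) = (u-\underline{u})_{\xi\xi}(x_0) - \kappa_\xi U(0),
\]
which rearranges to the exact identity
\[
u_{\xi\xi}(x_0) = \tfrac{1}{\gamma_0} U''(0) + \kappa_\xi U(0) + \underline{u}_{\xi\xi}(x_0).
\]
The hypothesis of Proposition \ref{Double Normal Estimate} is now verbatim the statement that the right-hand side is $\geq \tau$, so $u_{\xi\xi}(x_0)\geq \tau$ follows. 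Feeding this back into the first stage, together with the convexity lower bound $u_{\nu\nu}\geq 0$, completes the argument.

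The main conceptual obstacle --- rather than the bookkeeping, which is short --- is recognising that the rather baroque-looking operator $\gamma_0^{-1}\partial_s^2+\kappa_\xi$ in the subsolution condition has been designed precisely so that, when applied to $U$ and corrected by $\underline{u}_{\xi\xi}$, it reproduces $u_{\xi\xi}$ at the base point. Two ingredients make this work: the agreement of $u$ and $\underline{u}$ in the Neumann equality on $\Gamma^-$, which is why $U = \gamma_0(u-\underline{u})$; and the fact that a geodesic on $\Gamma^-$ has acceleration purely in the normal direction, which ensures that the only first-order contribution from the chain rule pairs with $u_\nu-\underline{u}_\nu=U$ rather than with some uncontrolled tangential gradient. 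Once this structure is seen, the rest of the proof is linear algebra built on the estimate \eqref{uxinu} and the Monge-Amp\`ere equation.
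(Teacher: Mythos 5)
Your argument is correct and follows essentially the same route as the paper: you use the shared Neumann identity to write $U=\gamma_0(u-\underline{u})$ along the geodesic, differentiate twice with $\gamma''(0)=-\kappa_\xi\nu$ to get $u_{\xi\xi}=\tfrac{1}{\gamma_0}U''(0)+\kappa_\xi U(0)+\underline{u}_{\xi\xi}\geq\tau$, and then bound $u_{\nu\nu}$ from the equation together with \eqref{uxinu}. Your explicit Schur-complement computation, $u_{\nu\nu}=\psi^n/\det A+b^TA^{-1}b\leq \|\psi\|_\infty^n/\tau^{n-1}+(n-1)C_3^2/\tau$, simply makes precise the final step the paper only sketches.
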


\subsubsection{Motivation}
Consider the 2-dimensional concentric setting with $\Gamma^- = \d B_{1}$. We parametrise $\Gamma^-$ by $\gamma(s) = (\cos s, \sin s)$, $s\in(-\pi,\pi]$. This parametrisation has the advantage that the normal at each point of $\Gamma^-$ is $\nu(s) = \gamma(s)$, and the tangent vector at each point of $\Gamma^-$ is $\xi(s) = \gamma'(s)$.

Suppose $u\in C^3(\overline{\Omega})$ is a solution of \eqref{TheEquation}. Considering the restriction of $u$ on $\Gamma^-$ as a function of $s$, we have
\begin{gather*}
	\frac{d}{ds} u(\gamma(s)) = \gamma' \cdot D u = u_\xi \, , \\
	\frac{d^2}{ds^2} u(\gamma(s)) = \gamma' \cdot (D^2 u) \gamma' + \gamma'' \cdot D u = u_{\xi\xi} - \kappa_\xi u_\nu \, ,
\end{gather*}
on $\Gamma^-$ as a consequence of the Frenet-Serret equations. To obtain an estimate for $u_{\nu\nu}$, it is sufficient to bound $u_{\xi\xi}$ from below by some positive $\tau$. The previously constructed family of solutions \eqref{u(k)(x)} shows that this is not always possible, since \[
\frac{d^2}{ds^2} u^{(k)}(\gamma(s)) = 0 \quad \forall s
\] by the radial symmetry, and $u^{(k)}_\nu \longrightarrow 0$ as $k \longrightarrow \infty$. Therefore $u^{(k)}_{\xi\xi} \longrightarrow 0$ at all points of $\Gamma^-$. Example \eqref{v(k)(x)} shows that this behaviour can also occur in more general settings. \\

What has gone wrong here? A closer examination of the $u^{(k)}$ and of the limiting solution $u^{(\infty)}$ offers us a clue. For $k < \infty$, there is a neighbourhood of $\Gamma^-$ outside of $\Omega$ such that each function can be extended past $\Gamma^-$ into this neighbourhood as a strictly convex $C^{2,\alpha}$ function. In fact, for solutions $u^{(k)}$ with $u^{(k)}_\nu \geq \psi R_-$, the neighbourhood may be taken to be the entire open set enclosed by $\Gamma^-$ containing the origin. In this sense, the quadratic solution, for which the normal derivative equals $\psi R_-$, separates those solutions of \eqref{TheEquation} which also solve the Dirichlet problem on the strictly convex domain enclosed by $\Gamma^+$ from those which do not.

However, if we attempt to extend $u^{(\infty)}$ in a similar way into a neighbourhood outside of $\Gamma^-$, it is not possible. The best we can do is to extend it as a constant function, with the result being merely $C^{1,\alpha}$ and no longer strictly convex. This demonstrates that even in this simple setting, there are choices of $\phi$, namely those less than $\phi^\psi_\infty$ given by \eqref{phiklowerbound}, for which the ellipticity of the equation breaks down. In this sense, $\phi^\psi_\infty$ in \eqref{phiklowerbound} is the ``critical $\phi$" for this choice of $\Omega$.

This reasoning suggests that we should look for the existence of a strictly convex subsolution $\underline{u}$ which acts as a lower barrier and prevents this breakdown of ellipticity. Examining the concentric setting for one final time, if $\psi_0 > \psi$, then from \eqref{phiklowerbound}, the ``critical $\phi$" for the problem with $\psi_0$ is greater than that for the problem with $\psi$. Hence, if there exists a subsolution $\underline{u}$ with $\det[D^2 \underline{u}] = \psi_0 > \psi$ and $\underline{u}_\nu = \gamma_0 \underline{u} + \phi$, then we must also have \[
\phi > \phi^{\psi_0}_\infty > \phi^\psi_\infty \, ,
\] and therefore there is a solution of \eqref{TheEquation} which is strictly convex and with bounded double normal derivative on $\Gamma^-$.

\subsubsection{Proof of Proposition \ref{Double Normal Estimate}}

Let $x_0 \in \Gamma^-$ and choose a unit tangent $\xi$ to $\Gamma^-$ at $x_0$. Assume the existence of $\gamma(s)$, $\underline{u}$ and $\tau$ as in Proposition \ref{Double Normal Estimate}, and define the function $U$ on $\gamma(s)$ by \[
U(s) \eqdef u_\nu(\gamma(s)) - \underline{u}_\nu(\gamma(s)) \, .
\] Then along $\gamma$,
\begin{gather*}
	\frac{d^2}{ds^2} U = \gamma_0 \frac{d^2}{ds^2} (u - \underline{u}) = \gamma_0 \left[ u_{\xi\xi} - \underline{u}_{\xi\xi} - \kappa_\xi (u_\nu - \underline{u}_\nu) \right] = \gamma_0 \left[ u_{\xi\xi} - \underline{u}_{\xi\xi} - \kappa_\xi U \right] \\
	\implies u_{\xi\xi} = \frac{1}{\gamma_0} \frac{d^2}{ds^2} U + \kappa_\xi U + \underline{u}_{\xi\xi} \, .
\end{gather*}
Therefore, by the assumption on the right-hand side of the above equation, $u_{\xi\xi} \geq \tau$ for any tangent vector $\xi$. \\

Using this, and by expanding $u_{\nu\nu}$ in terms of the tangential and mixed second derivatives on the boundary, the bound on $u_{\nu\nu}$ follows from the structure of the Monge-Amp\`ere equation itself and the boundedness of $\psi$. \qed


\subsection{Existence of Solutions for \eqref{TheEquation}}\label{sec-existence}

As usual, the $C^2$ estimates for solutions of \eqref{TheEquation} on $\Omega$ reduce to boundary estimates. This fact follows from a standard maximum principal argument using \eqref{Uijxx} based on \cite{CNS-1}, which we briefly sketch. Define the linear operator $L = F^{ij}D_{ij}$. Then
\begin{gather*}
	Lu_{\xi\xi} = F^{ij}u_{ij\xi\xi} \stackrel{\eqref{Uijxx}}{\geq} \ln(\psi^n)_{\xi\xi} \implies L(u_{\xi\xi} + Ku) \geq \ln(\psi^n)_{\xi\xi} + Kn \geq 0 \, ,
\end{gather*}
provided $K$ is chosen large enough. Therefore by the maximum principle, \[
\sup_{\overline{\Omega}} u_{\xi\xi} - KC_0 \leq \sup_{\overline{\Omega}}(u_{\xi\xi} + Ku) \leq \sup_{\Gamma^+ \cup \Gamma^-} (u_{\xi\xi} + Ku) \leq \sup_{\Gamma^+ \cup \Gamma^-} u_{\xi\xi} + KC_0 \, ,
\] and the result follows. This argument also applies to mixed derivatives since the Monge-Amp\`ere equation is invariant under orthogonal change of coordinates. \\

On $\Gamma^+$, the second derivative estimates follow from those for the Dirichlet problem in \cite{CNS-1}. Therefore, in summary, for $u\in C^4(\Omega)\cap C^3(\overline{\Omega})$ Propositions \ref{th-C2} and \ref{Double Normal Estimate} and equation \eqref{uxinu} imply the global estimate \[
\| u \|_{C^2(\overline{\Omega})} \leq C_5 \, ,
\] where $C_5$ depends on $n,\Omega,\Gamma^\pm,\psi,\gamma_0$ and $\phi$. \\

From the a priori estimate above, we use the method of continuity to show that under the conditions of Theorems \ref{main-th-C01}, \ref{main-th-tan} and \ref{main-th-nor} there exists a strictly convex $u:\Omega \longrightarrow \R$ satisfying \eqref{TheEquation}. First, note that under these conditions, Theorem 3.2 from \cite{LTHJBE} or Section 6 from \cite{HolderEstimates} imply that for any $\alpha\in(0,1)$, \[
\|u\|_{C^{2,\alpha}} \leq K \, ,
\] where $K$ is a constant depending on $n,\Omega,\psi, C_5$ and $\alpha$.

We can now directly apply Theorem 17.28 from \cite{GilbargTrudinger} to conclude that there exists a strictly convex solution $u \in C^{2,\alpha}(\Omega)$ of \eqref{TheEquation} under the conditions of Theorems \ref{main-th-tan} and \ref{main-th-nor}, thereby proving Corollary \ref{Cor-Existence}.


\section{Estimates and Existence for $\psi=\psi(x,u,Du)$}\label{sec-generalRHS}

We now consider the more general Monge-Amp\`ere type equation
\begin{equation*}
\begin{cases}
\det[D^2 u] = \psi^n (x,u,Du)  & \text{ in } \Omega \, , \\
u = 0  & \text{ on } \Gamma^+ \, , \\
u_\nu = \gamma_0 u + \phi(x) & \text{ on } \Gamma^- \, ,
\end{cases}
\end{equation*}
where $\Omega, \Gamma^+, \Gamma^-, \nu, \gamma_0$ and $\phi$ are as before, but now the function $\psi$ potentially depends additionally on $u$ and $Du$. Assume that $\psi_z(x,z,p) \geq 0$. In this section we re-derive a priori $C^0$, $C^1$ and $C^2$ estimates for solutions under extra conditions.


\subsection{The $C^0$ Estimate}\label{sec-C0Du}

\begin{proposition}\label{C0Du}
	Suppose $u\in C^2(\Omega)\cap C^1(\overline{\Omega})$ is a strictly convex solution of \eqref{TheEquationDu}, and that \eqref{Structure} and \eqref{StructureIneq} hold. Then $u$ is bounded, with \[
	0 \leq -u \leq C_0' \eqdef \frac{R_0 + \max_{\Gamma^-}|\phi|}{\gamma_0} + R_0 \diam (\Omega) \, .
	\]
\end{proposition}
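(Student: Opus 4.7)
The plan is to adapt the Lions--Trudinger--Urbas gradient-image technique to our mixed Dirichlet--Neumann annular setting, combining a pointwise case analysis on perturbed minimizers with the $L^1$ structure condition of Condition \ref{StructureConditions}. First I would use convexity of $u$ together with $u = 0$ on $\Gamma^+$ to record that $u \leq 0$ on $\overline{\Omega}$, set $M = -\min_{\overline{\Omega}} u \geq 0$, and pick a minimizer $x_0 \in \overline{\Omega}$ with $u(x_0) = -M$; the task then becomes producing the stated numerical bound on $M$.

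The core step is a three-way case analysis. For each $p \in \R^n$ consider the perturbed convex function $v_p(x) = u(x) - p\cdot x$ and its minimizer $y = y(p) \in \overline{\Omega}$. If $y \in \Omega$, interior criticality gives $p = Du(y) \in Du(\Omega)$. If $y \in \Gamma^+$, then $u(y)=0$ and $v_p(y) \leq v_p(x_0)$ immediately yields $M \leq p\cdot(y-x_0) \leq |p|\diam(\Omega)$. If $y \in \Gamma^-$, boundary minimality in the inward normal direction forces $\partial_\nu v_p(y) \geq 0$, so by the Neumann condition $\gamma_0 u(y) + \phi(y) \geq p\cdot\nu \geq -|p|$; comparing the resulting lower bound $u(y) \geq -(|p|+\max_{\Gamma^-}|\phi|)/\gamma_0$ with the upper bound $u(y) \leq -M + |p|\diam(\Omega)$ (again from $v_p(y) \leq v_p(x_0)$) gives
\[
M \leq |p|\diam(\Omega) + \frac{|p| + \max_{\Gamma^-}|\phi|}{\gamma_0} \eqdef F(|p|) \, .
\]
This $\Gamma^-$ bound is the weakest of the three, so overall either $p \in Du(\Omega)$ or $M \leq F(|p|)$.

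The final step is to invoke the structure condition. Contrapositively, $\{p : F(|p|) < M\} \subseteq Du(\Omega)$. Using strict convexity of $u$, the change of variables $p = Du(x)$ together with \eqref{Structure} and the definition \eqref{R0} of $R_0$ yields
\[
\int_{Du(\Omega)} h(p)\, dp = \int_\Omega h(Du)\, \det D^2 u\, dx \leq \int_\Omega g\, dx = \int_{|p|\leq R_0} h(p)\, dp \, ,
\]
from which monotonicity of the $h$-integral over concentric balls forces $F^{-1}(M) \leq R_0$. Inverting produces the claimed bound $M \leq F(R_0) = R_0\diam(\Omega) + (R_0 + \max_{\Gamma^-}|\phi|)/\gamma_0$, i.e., $-u \leq C_0'$.

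The main obstacle, modest here, is the bookkeeping for the $y \in \Gamma^-$ case: one must keep track of the inward-normal orientation, check that the Neumann sign combined with $p\cdot\nu \geq -|p|$ still delivers a uniform lower bound on $u(y)$, and ensure that the annular (non-convex) geometry of $\Omega$ does not obstruct the minimization of $v_p$ on the compact set $\overline{\Omega}$. No curvature or subsolution hypothesis is required at this stage, and the monotonicity hypothesis $\psi_z \geq 0$ is used only implicitly through Condition \ref{StructureConditions}.
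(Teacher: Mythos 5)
Your proposal is correct and is essentially the paper's own argument (the Lions--Trudinger--Urbas Theorem 2.1 scheme): your minimizer of $v_p(x)=u(x)-p\cdot x$ is exactly the paper's touching affine function $\pi$ with $D\pi=p$, and the three cases (interior giving $p\in Du(\Omega)$, $\Gamma^+$ giving $M\leq |p|\,\diam(\Omega)$, $\Gamma^-$ giving the bound through the Neumann condition) match the paper's, with the only cosmetic difference that you run the measure-theoretic step $\int_{Du(\Omega)}h\leq\int_\Omega g=\int_{|p|\leq R_0}h$ at the end via a contrapositive instead of first selecting a single $p\in B_R\setminus Du(\Omega)$ and letting $R\to R_0$.
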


\begin{proof}
	We follow (and simplify) the proof of Theorem 2.1 in \cite{LTU}. Let $R_0$ be as in \eqref{R0}. Since $\det[D^2 u] = \psi^n$ is the Jacobian of the gradient map $Du : \Omega \longrightarrow \R^n$, we have for any $R > R_0$, \[
	\int_{Du(\Omega)} h \stackrel{\eqref{Structure}}{\leq} \int_{\Omega} g \stackrel{\eqref{R0}}{<} \int_{|p|<R} h \, .
	\] This implies that there exists $p \in B_R(0) \setminus Du(\Omega)$. We can now define an affine function $\pi$ such that $D\pi = p$, $\pi(x_0) = u(x_0)$ for some $x_0 \in \overline{\Omega}$, and $\pi \leq u$ in $\Omega$. We must have that $x_0 \in \d\Omega$, or else $p$ would be tangent to $u$ at $x_0$, contradicting the definition of $p$.
	
	If $x_0 \in \Gamma^+$ then $\pi(x_0) = u(x_0) = 0$ and since $\pi(x) = \pi(x_0) + D\pi(x_0) \cdot (x - x_0)$, \[
	0 \leq -u(x) \leq \sup_{\overline{\Omega}} |\pi| \leq R\diam(\Omega) \quad \forall x\in \Omega \, .
	\] On the other hand, if $x_0 \in \Gamma^-$ then $-R \leq -|p| \leq \pi_\nu (x_0) \leq u_\nu (x_0)$. This implies \[
	\pi(x_0) = u(x_0) \geq \frac{u_\nu(x_0) - \phi(x_0)}{\gamma_0} \geq \frac{-R - \max_{\Gamma^-}|\phi|}{\gamma_0} \, .
	\] Thus \[
	0 \leq -u(x) \leq -\pi(x) \leq \frac{R + \max_{\Gamma^-}|\phi|}{\gamma_0} + R\diam(\Omega) \, .
	\] The result follows by combining the two cases and letting $R \longrightarrow R_0$.
\end{proof}


\subsection{Another Counterexample}\label{sec-CounterexampleDu}

Under the structure conditions \eqref{Structure} and \eqref{StructureIneq}, it is possible to extend the proof of Proposition \ref{Gradient} to include the case where $\psi = \psi(x,u)$, noting that $\|\psi\|_\infty$ in \eqref{GradientBound} will also depend on $C_0'$.

However, we now present a counterexample demonstrating that in general, when $\psi$ depends additionally on $Du$, solutions of $\eqref{TheEquationDu}$ may have arbitrarily large gradient on $\Gamma^+$. The example also shows that even the structure conditions \eqref{Structure} and \eqref{StructureIneq} are not sufficient to guarantee a gradient estimate in this setting. \\

Consider once more the two dimensional radial problem \eqref{RadialMongeAmpere} with general right-hand side $\psi(x,u,Du)$ satisfying no constraints. Let $R_+ - R_- < 1$. We will show that it is possible to construct a solution $u$ of this equation satisfying the boundary conditions for which $|Du|$ blows up on $\Gamma^+$.

Set \[
u(r) = (R_+ - r)[\ln(R_+ - r) - 1] \, .
\] Differentiating this, we find
\begin{gather*}
u_r(r) = -\ln(R_+ - r) \, , \quad u_{rr}(r) = \frac{1}{R_+ - r} > 0 \, .
\end{gather*}
Therefore, $u$ satisfies the Monge-Amp\`ere type equation
\begin{equation}\label{TheEquationDuk}
\begin{cases}
\det[D^2 u] = \frac{u_{rr}u_r}{r} = \frac{u_r e^{u_r}}{r} = \frac{x\cdot Du}{|x|^2} \exp\left( \frac{x\cdot Du}{|x|} \right)  & \text{ in } \Omega \, , \\
u = 0  & \text{ on } \Gamma^+ \, , \\
u_\nu = \gamma_0 u + \phi & \text{ on } \Gamma^- \, ,
\end{cases}
\end{equation}
with \[
\phi = \gamma_0(R_+ - R_-)[1 - \ln(R_+ - R_-)] -\ln(R_+ - R_-) \, .
\] However, $u_r(r) \longrightarrow \infty$ as $r \longrightarrow R_+$. It is straightforward to construct a sequence of $C^{2,\alpha}$ solutions of \eqref{TheEquationDuk} which converge to $u$ by taking
\begin{gather*}
\phi_k = d_k + \gamma_0 \Bigl\{ e^{-d_k}(d_k + 1) + (e^{-d_k} + R_- - R_+)[\ln(e^{-d_k} + R_- - R_+) - 1] \Bigr\} \, , \\
u^{(k)}(r) = (e^{-d_k} + R_- - r)[\ln(e^{-d_k} + R_- - r) - 1] - (e^{-d_k} + R_- - R_+)[\ln(e^{-d_k} + R_- - R_+) - 1] \, .
\end{gather*}
where $d_k$ is a sequence of positive numbers converging from below to $-\ln(R_+ - R_-)$. \\

However, this problem satisfies the structure conditions \eqref{Structure} and \eqref{StructureIneq}. By defining \[
g(x) = \frac{1}{|x|} \, , \quad h(p) = \frac{e^{-|p|}}{|p|} \, ,
\] we see that \[
\psi^2(x,z,p) \leq \frac{|p|}{|x|}e^{|p|} = \frac{g(x)}{h(p)} \, .
\] Moreover, if $\Omega$ has the property that $R_+ - R_- < 1$, then \[
\int_{\Omega} g(x) \mathrm{d}x = 2\pi (R_+ - R_-) < 2\pi = \int_{\R^2} h(p) \mathrm{d}p \, .
\] Therefore the structure conditions are not sufficient to guarantee a gradient estimate.


\subsection{The $C^1$ Estimate}\label{sec-C1Du}

In this section we prove local and global gradient estimates for $u\in C^2(\Omega)\cap  C^1(\overline{\Omega})$. With Condition \ref{StructureConditions} alone it is possible only to prove a local gradient estimate on a subset of $\Omega$. In particular, there exists a neighbourhood of $\Gamma^-$ in which the gradient is bounded. To prove the global gradient estimate we must additionally impose Structure Condition \ref{StructureConditionGradient}.

\begin{proposition}\label{GradientDu}
	Suppose $u \in C^2(\Omega)\cap C^1(\overline{\Omega})$ is a strictly convex solution of \eqref{TheEquationDu}. Suppose Condition \ref{StructureConditions} holds. For $0 > -\lambda \geq \sup_{\Gamma^-} u$, define the set \[
	\Omega_\lambda \eqdef \{ x\in\Omega : u(x) \leq -\lambda \} \subset \Omega \, .
	\] Then
	\begin{equation*}
	\sup_{\overline{\Omega_\lambda}} |Du| \leq C_{1,\rm{loc}}' \eqdef \frac{C_0'}{\dist(\Omega_\lambda,\Gamma^+)} \, .
	\end{equation*}
\end{proposition}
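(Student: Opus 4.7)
The plan is to adapt the classical convexity-based gradient bound to our annular setting. For a convex function $v\le 0$ on a bounded convex domain $D$ with $v=0$ on $\partial D$, the textbook estimate $|Dv(x_0)|\le |v(x_0)|/\dist(x_0,\partial D)$ follows by comparing the supporting hyperplane at $x_0$ with $v$ at a boundary point lying in the direction $Dv(x_0)/|Dv(x_0)|$. We want to recover the same kind of bound on $\overline{\Omega_\lambda}$ even though $\Omega$ itself is annular (hence non-convex).

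Fix $x_0\in\overline{\Omega_\lambda}$ and set $p=Du(x_0)$; if $p=0$ there is nothing to prove, so assume $|p|>0$ and put $\xi=p/|p|$ and $d=\dist(\Omega_\lambda,\Gamma^+)$. By the $C^2$-strict-convexity of $u$ on $\Omega$, the tangent affine function $L(x)=u(x_0)+p\cdot(x-x_0)$ satisfies $L(y)\le u(y)$ along any segment $[x_0,y]\subset\overline{\Omega}$. Since $x_0$ lies in the bounded convex region $\overline{\Omega^+}$ enclosed by $\Gamma^+$, the ray $\{x_0+t\xi:t>0\}$ exits $\overline{\Omega^+}$ at a unique point $y^*=x_0+t^*\xi\in\Gamma^+$ with $t^*\ge\dist(x_0,\Gamma^+)\ge d$. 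In the ``good'' case $[x_0,y^*]\subset\overline{\Omega}$, one immediately obtains $L(y^*)\le u(y^*)=0$, hence $|p|\,t^*\le -u(x_0)\le C_0'$, and so $|p|\le C_0'/d$, which is the claimed bound.

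The main obstacle is the ``bad'' case in which the segment $[x_0,y^*]$ crosses the convex hole enclosed by $\Gamma^-$, which can happen when $x_0$ sits close to $\Gamma^-$ and the direction $\xi$ is oriented across that hole. To handle this I would extend $u$ to a convex function on the full convex set $\overline{\Omega^+}$ via the convex envelope $\tilde u(x)=\sup\{L(x):L\text{ affine},\ L\le u\text{ on }\overline{\Omega}\}$. Then $\tilde u$ is convex on $\overline{\Omega^+}$, satisfies $\tilde u\le 0$ with $\tilde u=0$ on $\Gamma^+$, and the $C^2$-strict-convexity of $u$ in $\Omega$ ensures that $\tilde u=u$ and $D\tilde u=Du$ throughout $\Omega$. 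Applying the classical convex-domain estimate to $\tilde u$ on $\overline{\Omega^+}$ then gives $|Du(x_0)|=|D\tilde u(x_0)|\le |\tilde u(x_0)|/\dist(x_0,\Gamma^+)\le C_0'/d$. The technical crux is verifying that the tangent hyperplane to $u$ at $x_0\in\Omega$ really lies in the defining family of $\tilde u$, i.e., that it is dominated by $u$ on all of $\overline{\Omega}$; this is the real content of the annular-geometry step, and it is here that the specific structure of the problem (convex $\Gamma^\pm$ enclosing the origin, $u=0$ on $\Gamma^+$) is invoked.
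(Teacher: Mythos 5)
Your ``good case'' is exactly the paper's core computation, but the way you dispose of the ``bad case'' contains a genuine gap. The assertion that the $C^2$ strict convexity of $u$ in $\Omega$ ensures $\tilde u=u$ and $D\tilde u=Du$ throughout $\Omega$ is not a consequence of $D^2u>0$: positive definiteness of the Hessian on the non-convex annulus yields the supporting-plane inequality $u(y)\ge u(x_0)+Du(x_0)\cdot(y-x_0)$ only along segments $[x_0,y]$ contained in $\overline\Omega$, and says nothing about points $y$ separated from $x_0$ by the hole. Moreover, since $u$ is differentiable, $\tilde u(x_0)=u(x_0)$ forces any admissible affine function touching $u$ at $x_0$ to be the tangent plane there, so your claim is \emph{equivalent} to the statement that every tangent plane of $u$ minorizes $u$ across the hole --- which is precisely the ``technical crux'' you acknowledge at the end and never prove; no progress is made by introducing the envelope. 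The bad case cannot be brushed aside either: already for the skewed-concentric quadratic of Section \ref{sec-explicit} there are points of $\Omega_\lambda$ near $\Gamma^-$ at which $Du$ points straight into the hole, so the gradient ray does cross the region enclosed by $\Gamma^-$. As written, the proposal therefore does not prove the proposition.

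The paper's proof avoids the bad case instead of repairing it. By convexity (monotonicity of directional derivatives along rays contained in $\overline\Omega$) the supremum of $|Du|$ over $\overline{\Omega_\lambda}$ is reduced to the level hypersurface $\Gamma^+_\lambda=\{u=-\lambda\}$, and at a point $x_0\in\Gamma^+_\lambda$ one follows the ray in the direction $\xi=Du(x_0)/|Du(x_0)|$: the function $t\mapsto u(x_0+t\xi)$ is convex with derivative at least $|Du(x_0)|>0$, hence $u>-\lambda$ for $t>0$ as long as the segment stays in $\overline\Omega$. Since $u\le-\lambda$ on $\Gamma^-$ by the choice of $\lambda$, the ray can never meet $\Gamma^-$, so its first exit from $\Omega$ is through $\Gamma^+$, where $u=0$; the whole segment lies in $\overline\Omega$ and your ``good case'' comparison applies verbatim, giving $|Du(x_0)|\le\lambda/t^*\le C_0'/\dist(\Omega_\lambda,\Gamma^+)$. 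If you want to salvage your write-up, replace the convex-envelope step by this level-set localisation together with the observation that the gradient ray from a point of $\{u=-\lambda\}$ cannot reach $\{u\le-\lambda\}\supset\Gamma^-$; no convex extension of $u$ beyond $\Gamma^-$ is needed anywhere in the argument.
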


\begin{proof}
	The set $\d\Omega_\lambda$ comprises $\Gamma^-$ as an inner boundary, and a convex outer boundary, \[
	\Gamma^+_\lambda \eqdef \{ x\in\Omega : u(x) = - \lambda \} \, .
	\] By the convexity of $u$, it is enough to prove that $\sup_{\Gamma^+_\lambda}|Du| \leq C_0'/\dist(\Omega_\lambda,\Gamma^+)$. We first show that $|Du|$ is bounded by a constant on $\Gamma^+_\lambda$. Define $\normal_\lambda$ to be the unit normal field to $\Gamma^+_\lambda$ in the direction pointing outwards from $\Omega_\lambda$. Suppose for contradiction that there exists $x_0 \in \Gamma^+_\lambda$ such that $|u_{\normal_\lambda}(x_0)| = \infty$. Let $\ell_\lambda$ be the line segment starting at $x_0$ in the direction $\normal_\lambda(x_0)$, and let $y_0$ be the unique point where $\ell_\lambda$ intersects with $\Gamma^+$. Then, by convexity, \[
	0 = u(y_0) \geq u(x_0) + Du \cdot (y_0 - x_0) = -\lambda + |u_{\normal_\lambda}(x_0)||y_0 - x_0| \, ,
	\] which is impossible unless $|y_0 - x_0| = 0$. However, this contradicts the definitions of $\Omega_\lambda$ and $x_0$, and the continuity of $u$. Therefore there exists a constant $\beta$ such that $|u_{\normal_\lambda}(x_0)| \leq \beta$.
	
	Finally, again by convexity of $u$, we know that for each $x_0 \in \Gamma^+_\lambda$, \[
	|Du(x_0)| \leq \frac{-u(x_0)}{\dist(x_0,\Gamma^+)} \leq \frac{C_0'}{\dist(\Omega_\lambda,\Gamma^+)} \, ,
	\] which implies the result by taking the supremum over $x_0 \in \Gamma^+_\lambda$.
\end{proof}

Under further structure conditions however, we can improve upon Proposition \ref{GradientDu} with a global gradient estimate.

\begin{proposition}\label{C1Du}
	Suppose $u \in C^2(\Omega)\cap C^1(\overline{\Omega})$ is a strictly convex solution of \eqref{TheEquationDu}. Suppose Conditions \ref{StructureConditions} and \ref{StructureConditionGradient} hold. Then there exists a constant $C_1' = C_1'(n,\Omega,\Gamma^+,\beta,Z)$ such that \[
	\sup_{\overline{\Omega}} |Du| \leq C_1' \, .
	\]
\end{proposition}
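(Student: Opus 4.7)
The plan is to reduce the problem to a boundary gradient estimate on $\Gamma^+$, then construct a linear lower barrier using the structure condition \eqref{StructureGradient}. Fix $\lambda = C_0'/2$; by Proposition \ref{GradientDu}, $|Du|$ is bounded on $\overline{\Omega_\lambda}$ by a constant depending on $C_0'$ and the geometry. In the residual strip $\Omega \setminus \Omega_\lambda$ near $\Gamma^+$, convexity of $u$ yields monotonicity of its directional derivatives along line segments in $\Omega$, so $|Du|$ at any interior point of the strip is controlled by its boundary values on $\Gamma^+$ and on $\{u = -\lambda\}$; the latter is covered by Proposition \ref{GradientDu}. Thus it suffices to bound $|Du|$ on $\Gamma^+$.

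Let $\rho$ be a smooth, strictly convex defining function for the convex region enclosed by $\Gamma^+$, with $\rho<0$ inside, $\rho=0$ and $|D\rho|\in(0,1]$ on $\Gamma^+$, and $D^2\rho\geq\lambda_{\min} I>0$. For $\mu>0$ and a thin neighborhood $\mathcal{N}$ of $\Gamma^+$ on which $d_x\sim-\rho(x)$, consider the linear barrier $\underline{w}(x)=\mu\rho(x)$ on $\mathcal{N}\cap\Omega$. Then $\det D^2\underline{w}=\mu^n\det D^2\rho\geq\mu^n\lambda_{\min}^n$, and by Condition \ref{StructureConditionGradient}, when $\mu|D\rho|\geq Z(\mu\|\rho\|_\infty)$,
\begin{equation*}
\psi^n(x,\underline{w},D\underline{w}) \leq Z(\mu\|\rho\|_\infty)\, d_x^\beta\, \mu^{\beta+n+1}\, \|D\rho\|_\infty^{\beta+n+1} \, ,
\end{equation*}
while in the complementary range $\psi^n$ is bounded uniformly in $\mu$. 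Choose $\mu$ first so that $\mu\inf_{\partial\mathcal{N}\cap\Omega}|\rho|\geq C_0'$, which guarantees $\underline{w}\leq -C_0'\leq u$ on the inner boundary of $\mathcal{N}\cap\Omega$ via Proposition \ref{C0Du}; then shrink $\mathcal{N}$ so that $d_x^\beta$ is small enough on $\mathcal{N}$ for the subsolution inequality $\det D^2\underline{w}\geq\psi^n(x,\underline{w},D\underline{w})$ to hold (with a strict margin) throughout $\mathcal{N}\cap\Omega$.

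A Monge-Amp\`ere comparison then yields $\underline{w}\leq u$ in $\mathcal{N}\cap\Omega$: at a would-be interior maximum $x_0$ of $\underline{w}-u$ one has $D\underline{w}(x_0)=Du(x_0)$, $D^2\underline{w}(x_0)\leq D^2u(x_0)$, and $u(x_0)<\underline{w}(x_0)$, whence
\begin{equation*}
\det D^2\underline{w}(x_0) \leq \det D^2u(x_0) = \psi^n(x_0,u(x_0),Du(x_0)) \leq \psi^n(x_0,\underline{w}(x_0),D\underline{w}(x_0))
\end{equation*}
by monotonicity of $\det$ on positive definite matrices and $\psi_z\geq 0$, contradicting the strict subsolution inequality. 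Since $u=\underline{w}=0$ on $\Gamma^+$ and the tangential derivatives of $u$ vanish there by the Dirichlet condition, the boundary normal-derivative comparison gives $|Du|=|u_{\normal}|\leq|\underline{w}_{\normal}|=\mu|D\rho|$ on $\Gamma^+$. The principal obstacle is the simultaneous choice of $\mu$ and $\mathcal{N}$: the boundary comparison on $\partial\mathcal{N}\cap\Omega$ forces $\mu$ large in terms of $|\rho|$ on this inner boundary, while the subsolution inequality in $\mathcal{N}\cap\Omega$ requires $\mathcal{N}$ to be correspondingly thin in order to exploit the vanishing factor $d_x^\beta$ afforded by Condition \ref{StructureConditionGradient}; this is the quantitative point at which the condition is used.
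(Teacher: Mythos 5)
Your reduction to a boundary gradient estimate on $\Gamma^+$ is fine (it is immediate from convexity, as in Proposition \ref{Gradient}; the detour through Proposition \ref{GradientDu} with $\lambda=C_0'/2$ is unnecessary and slightly off, since that proposition requires $-\lambda\geq\sup_{\Gamma^-}u$). The genuine gap is in the barrier. A linear barrier $\underline{w}=\mu\rho$ cannot be made a subsolution on a strip deep enough for the comparison, and the tension you flag at the end is not resolvable by shrinking $\mathcal{N}$. Quantitatively: $\det D^2\underline{w}=\mu^n\det D^2\rho\sim\mu^n$, while \eqref{StructureGradient} only gives $\psi^n(x,\underline{w},D\underline{w})\lesssim Z\,d_x^\beta\,\mu^{\beta+n+1}$, so the subsolution inequality forces $d_x^\beta\lesssim\mu^{-(\beta+1)}$, i.e.\ it can hold only on a strip of width $O(\mu^{-(\beta+1)/\beta})$ when $\beta>0$ (and forces $\mu\lesssim 1$ when $\beta=0$). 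On the other hand, the comparison on the inner boundary of the strip requires $\mu\,|\rho|\geq C_0'$ there, i.e.\ a strip of width at least $\sim C_0'/\mu$. Since $\mu^{-(\beta+1)/\beta}=\mu^{-1}\mu^{-1/\beta}\ll\mu^{-1}$ for large $\mu$, the two requirements are incompatible unless $\mu$ stays below a fixed constant determined by $Z$, $\lambda_{\min}$, $\|D\rho\|$ and $C_0'$, in which case the strip would have to have width $\geq C_0'/(c\mu)$, which is impossible in general (e.g.\ for large $C_0'$ or large $Z$). Shrinking $\mathcal{N}$ makes matters worse: it forces $\mu$ up, and the imbalance between the loss $\mu^{\beta+1}$ and the gain $d_x^\beta\sim\mu^{-\beta}$ grows by a factor $\mu$. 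The exponent $\beta+n+1$ in Condition \ref{StructureConditionGradient} is critical precisely in a way a linear barrier cannot see.

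The paper's proof is a one-line reduction plus a citation: after the convexity reduction, it invokes the proof of Theorem 17.21 in \cite{GilbargTrudinger} (the Trudinger--Urbas construction). There the barrier is not linear but of the form $v(d_x)$ with $v$ determined by the ODE obtained from comparing $\det D^2 v(d)\approx v''\,|v'|^{n-1}\times(\text{boundary curvatures})$ with $Z\,d^\beta|v'|^{n+1+\beta}$, namely $v''=C\,d^\beta|v'|^{2+\beta}$. Integrating gives $|v'(d)|=\bigl(|v'(0)|^{-(1+\beta)}+C'd^{\,1+\beta}\bigr)^{-1/(1+\beta)}$, which behaves like $d^{-1}$ on an intermediate range, so $\int_0^a|v'|$ diverges logarithmically as $|v'(0)|\to\infty$; hence for a fixed strip width $a$ one can choose $|v'(0)|$ large (but finite) so that the barrier reaches depth $C_0'$ at $d=a$, and the comparison then yields $|Du|\leq|v'(0)|$ on $\Gamma^+$. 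If you want a self-contained proof rather than the citation, you need to replace your $\mu\rho$ by this ODE-generated barrier; with the linear barrier the key step fails.
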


\begin{proof}
	Since $u$ is convex, we need only prove a gradient estimate on $\Gamma^+$. This follows directly from the proof of Theorem 17.21 in Chapter 17 of \cite{GilbargTrudinger}.
\end{proof}


\subsection{The $C^2$ Estimate}\label{sec-C2Du}

We now show that it is possible to obtain a $C^2$ estimate for $u$. As before, this reduces to proving second order tangential and normal derivative estimates on $\Gamma^-$. The proof of the tangential derivative estimate is essentially the same as Section \ref{sec-tan}, but with a more general barrier. The proof of the normal derivative estimate is exactly the same as Section \ref{sec-normal}, since this proof does not depend on $\psi$.

\begin{proposition}\label{th-C2Du}
	Suppose $u \in C^4(\Omega)\cap C^3(\overline{\Omega})$ is a strictly convex solution of \eqref{TheEquationDu}. Then there exists a constant $C_2' = C_2'(n,\Omega,\Gamma^\pm,\psi,\phi)$ such that \[
	\sup_{\overline{\Omega}}|D^2 u| \leq C_2' \, ,
	\] provided that there exists a subsolution satisfying Condition \ref{SubsolutionCondition}, and \[
	2\max_{\substack{i = 1,...,n-1 \\ \bar{x} \in\Gamma^-}} \kappa_i(\bar{x}) + \tilde{C} < \gamma_0 + \max \left\{ 0 , \min_{\Gamma^-}\frac{\gamma_0 u + \phi}{\tilde{M} + (1-N^4)u} \right\} \, ,
	\] where $\kappa_i(\bar{x})$ is the $i^{\text{th}}$ principal curvature of $\Gamma^-$ at $\bar{x}$, $N = N(n,\Omega,\psi,\phi)$, $\tilde{C} = \tilde{C}(n,\Omega,\Gamma^\pm,\psi,\phi,N)$ and $\tilde{M} = \tilde{M}(n,\Omega,\Gamma^\pm,\psi,\phi)$ are constants.
\end{proposition}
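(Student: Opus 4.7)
The plan is to split the $C^2$ bound into four pieces in the order forced by the hypothesis structure: (1) the double normal estimate on $\Gamma^-$; (2) the double tangential estimate on $\Gamma^-$; (3) interior estimates; (4) $\Gamma^+$ estimates from \cite{CNS-1}. The $(u,Du)$-dependence of $\psi$ only interacts with steps (2) and (3), and this interaction is precisely what forces the modified constants $N,\tilde{M},\tilde{C}$ appearing in Condition \ref{CurvatureConditionDu}.

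\textbf{Step 1 (normal on $\Gamma^-$).} The argument of Proposition \ref{Double Normal Estimate} never uses the form of the right-hand side: it relies only on the Frenet equations on $\Gamma^-$, convexity, the Neumann condition, the mixed-derivative bound \eqref{uxinu}, and Condition \ref{SubsolutionCondition}. So the same proof applies verbatim to give $u_{\xi\xi}\geq \tau>0$ on $\Gamma^-$ for every tangent $\xi$. Decomposing $D^2u|_{\Gamma^-}$ along the tangent/normal splitting and using $\det D^2u = \psi^n$ (with $\psi$ now bounded via Propositions \ref{C0Du} and \ref{C1Du}), this gives $\sup_{\Gamma^-}|u_{\nu\nu}|\leq C_4$. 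The constant $C_4$, together with $C_0'$ and $C_1'$, determines the structural constant $N=N(n,\Omega,\psi,\phi)$.

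\textbf{Step 2 (tangential on $\Gamma^-$).} I would adapt the barrier of Proposition \ref{th-C2} to
\[
w = g(u)\,u_{\xi\xi} + a_k u_k + b + M|x|^2, \qquad g(u) = \frac{1}{\tilde{M} + (1-N^4)u}.
\]
Since $u\leq 0$ and $N\geq 1$, $g>0$; one checks $g''g = 2(g')^2$, $g'\geq 0$, and $g'/g$ bounded on $[-C_0',0]$, so Lemma \ref{lem-G} applies. Computing $F^{ij}w_{ij}$ at an interior maximum as in Lemma \ref{Fijwij}, two new contributions appear relative to the $\psi=\psi(x)$ case: $\ln(\psi^n)_\xi$ carries $n(\psi_z u_\xi + \psi_{p_k}u_{k\xi})/\psi$, and $\ln(\psi^n)_{\xi\xi}$ contains a third-order term $n\psi_{p_k}u_{k\xi\xi}/\psi$. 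The first-order relation $w_i=0$ lets me eliminate $u_{k\xi\xi}$ in terms of $u_{\xi\xi}$, $Du$, and lower-order data; then $\psi_z\geq 0$ and $|Du|\leq C_1'$ let the remaining terms be absorbed into the dominant $2M\mathcal{T}$ contribution by taking $\tilde{M}$ large, contradicting $F^{ij}w_{ij}\leq 0$. When the maximum occurs on $\Gamma^-$, differentiating the Neumann condition twice along the geodesic $\gamma$ as in Proposition \ref{th-C2} and substituting $\partial_\nu w\leq 0$ yields
\[
\Bigl(\chi_u - 2\kappa_\xi + \tfrac{g'(u)}{g(u)}\chi - E\Bigr)u_{\xi\xi} + \kappa_\xi u_{\nu\nu} \leq O(1),
\]
where $E$ is a correction from the $\psi_{p_k}$ terms bounded by the constant $\tilde{C}(n,\Omega,\Gamma^\pm,\psi,\phi,N)$. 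The hypothesis $2\kappa_\xi + \tilde{C} < \gamma_0 + \max\{0,\min_{\Gamma^-}(\gamma_0 u+\phi)/(\tilde{M}+(1-N^4)u)\}$ makes the coefficient of $u_{\xi\xi}$ strictly positive, and Step 1 provides the upper bound on $\kappa_\xi u_{\nu\nu}$, giving the tangential bound.

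\textbf{Steps 3 and 4.} For the interior I follow Section \ref{sec-existence}: with $L=F^{ij}D_{ij}$ and the analogue of \eqref{Uijxx} for $\psi=\psi(x,u,Du)$ (picking up additional bounded terms from $\psi_z$ and $\psi_{p_k}$ absorbed via the $C^0$ and $C^1$ estimates), I get $L(u_{\xi\xi}+Ku)\geq 0$ for $K$ large depending on $C_0'$, $C_1'$ and two derivatives of $\ln\psi$; the maximum principle then reduces the interior bound to the boundary one. On $\Gamma^+$ the second-derivative bounds are those of \cite{CNS-1}. The main obstacle is Step 2: the third-order term $n\psi_{p_k}u_{k\xi\xi}/\psi$ in $\ln(\psi^n)_{\xi\xi}$ has no counterpart in the $\psi=\psi(x)$ case, and handling it cleanly is the reason for both the $N^4$ weighting in $g$ (which enlarges the trace contribution used to dominate the extra terms at an interior maximum) and the additive constant $\tilde{C}$ on the boundary, which in turn is why Condition \ref{CurvatureConditionDu} must be strictly more restrictive than Condition \ref{CurvatureCondition}.
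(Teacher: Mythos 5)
Your Steps 1 and 4 are fine and match the paper (the double normal argument is indeed independent of the form of $\psi$, and $\Gamma^+$ is handled by \cite{CNS-1}), but Step 2 has a genuine gap: with the plain barrier $w = g(u)u_{\xi\xi} + a_k u_k + b + M|x|^2$ the interior-maximum argument does not close. The dangerous new contributions from $\ln(\psi^n)_{\xi\xi}$ are not the third-order term $\tfrac{n}{\psi}\psi_{p_m}u_{m\xi\xi}$ (that one can indeed be eliminated via $w_i=0$, as you say), but the terms quadratic in the mixed second derivatives, namely $\tfrac{n}{\psi}\psi_{p_m p_q}u_{m\xi}u_{q\xi} - \tfrac{n}{\psi^2}(\psi_{p_m}u_{m\xi})^2$ (plus terms linear in $u_{m\xi}$ such as $2\psi_{\xi p_m}u_{m\xi}$). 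These have no sign and grow like $u_{\eta\eta}^2$, where $u_{\eta\eta}$ is the largest eigenvalue of $D^2u$, whereas the positive term $2M\mathcal{T}$ you propose to absorb them into grows only like $u_{\eta\eta}^{1/(n-1)}$ (since $\prod \lambda_i = \psi^n$ is bounded), and the other positive terms ($ng'u_{\xi\xi}$, etc.) are at most linear in $u_{\eta\eta}$. Hence no choice of $M$, $\tilde M$, $N$ makes $F^{ij}w_{ij}>0$ for arbitrarily large $u_{\eta\eta}$, and the contradiction at an interior maximum fails. This is exactly why the paper replaces $w$ by $\tilde w = e^{N|Du|^2}w$: differentiating the exponential factor produces the good term $2Nw\bigl[\Delta u + \tfrac{n}{\psi}u_k(\psi_{x_k}+\psi_z u_k)\bigr] \geq \tfrac12 N g u_{\eta\eta}^2$, which is quadratic in $u_{\eta\eta}$ with a large constant $N$ and dominates all the bad second-derivative terms, at the price of extra terms $F^{ij}w_iw_j/w$ that are absorbed by Cauchy--Schwarz with $\varepsilon = N^{-2}$.

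This also means your accounting of where $N^4$ and $\tilde C$ come from is off. The exponent $1-N^4$ in $g(u) = \bigl(\tilde M + (1-N^4)u\bigr)^{1/(1-N^4)}$ is forced by the modified concavity requirement $g''g \geq N^4 (g')^2$ arising from absorbing the $F^{ij}w_iw_j/w$ terms created by the exponential factor, not by ``enlarging the trace contribution''; and the additive constant $\tilde C$ in Condition \ref{CurvatureConditionDu} comes from the boundary derivative of the exponential factor, $2Nw\,u_k u_{k\nu}$, evaluated at the maximum point on $\Gamma^-$ -- which is precisely where the already-established mixed bound $C_3$ and double normal bound $C_4'$ enter, and is the real reason the normal estimate must precede the tangential one. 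A similar caveat applies to your Step 3: for $\psi = \psi(x,u,Du)$ the differentiated equation again contains $\psi_{p_m}u_{m\xi\xi}$ and $\psi_{p_mp_q}u_{m\xi}u_{q\xi}$, so the extra terms are not bounded and the simple $L(u_{\xi\xi}+Ku)$ argument from Section \ref{sec-existence} does not suffice; the paper instead invokes the auxiliary function of Section 7(d) of \cite{CNS-1} for the interior reduction.
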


In order to obtain the double normal estimate on $\Gamma^-$ we assume the existence of a subsolution satisfying Condition \ref{SubsolutionCondition}. This subsolution also forces the solution to be bounded and its gradient to be bounded on $\Gamma^+$, and thus globally bounded on $\overline{\Omega}$ by the maximum principle. Therefore, \[
\sup_{\overline{\Omega}} |Du| \leq C_1' \, .
\]

We also note that the estimate for $|u_{\xi\nu}|$ on $\Gamma^-$, where $\xi$ is tangential, of \eqref{uxinu} is still valid for $\psi = \psi(x,u,Du)$. Moreover, the proof of Proposition \ref{Double Normal Estimate} is also valid. Thus under the assumptions of \ref{th-C2Du},
\begin{equation*}
	\sup_{\Gamma^-} |D_{\nu\nu}| \leq C_4' \, .
\end{equation*}
Therefore it remains only to bound $|u_{\xi\xi}|$ above, where $\xi$ is a unit tangent vector to $\Gamma^-$.

As in Section \ref{sec-tan}, it is convenient to denote the extension of the Neumann condition into $\Omega$ by $\chi(x, u)=\gamma_0 u + \phi(x)$, where by an abuse of notation $\phi$ is a smooth extension of its counterpart on the boundary. Let us consider a modification of the auxiliary function used in Section \ref{sec-tan}, \[
\tilde{w} = e^{N|Du|^2} w = e^{N|Du|^2} [g(u)u_{\xi\xi} + a_k u_k + b + M|x|^2] \, ,
\] where $w$ is the function from Section \ref{sec-tan}, $\xi$ is an arbitrary direction, $g(u)$ is a nonnegative function to be chosen, $M, N$ are constants also to be chosen, and $a_k$ and $b$ are as in \cite{LTU}:
\begin{gather*}
a_k = 2(\xi\cdot \nu)(\chi_u\xi_k'-\xi_k'D_i\nu_k) \, , \\
b = 2(\xi\cdot \nu)\xi_k'\chi_{x_k} \, ,
\end{gather*}
$\xi'=\xi-(\xi\cdot\nu)\nu$ with $\nu$ a $C^{2,1}(\Omega)$ extension of the inner unit normal field on $\Gamma^-$. \\

Suppose for contradiction that $w$ has a local maximum at $x_0 \in \Omega$. Then at $x_0$ we have
\begin{gather}
\frac{\tilde{w}_i}{\tilde{w}} = 2N u_k u_{ki} + \frac{w_i}{w} = 2N u_k u_{ki} + \frac{1}{w} (g' u_i u_{\xi\xi} + g u_{i\xi\xi} + a_{k,i}u_k + a_ku_{ki} + b_i + 2Mx_i) = 0 \, , \label{C2 wiDu} \\
\frac{\tilde{w}_{ij}}{\tilde{w}} - \frac{\tilde{w}_i\tilde{w}_j}{\tilde{w}^2} = 2N u_{ki} u_{kj} + 2N u_k u_{kij} + \frac{w_{ij}}{w} - \frac{w_i w_j}{w^2} \leq 0 \, . \label{C2 wijDu}
\end{gather}
For clarity, we use a comma in the subscript to denote the derivatives of the components $a_k$. As in Section \ref{sec-tan}, set $F^{ij} = u^{ij}$. \\

We want to multiply equation \eqref{C2 wijDu} by $F^{ij}$ and eliminate the highest order $\xi$-derivative terms using \eqref{C2 wiDu}. If $F^{ij}$ and $u_{ij}$ are as above, then for any direction $\xi$ we have
\begin{gather*}
F^{ij}u_{ij\xi} = \ln(\psi^n)_\xi \, , \\
F^{ij}u_{ij\xi\xi} = \ln(\psi^n)_{\xi\xi} + F^{ij}u_{jk\xi}F^{kl}u_{li\xi} \, .
\end{gather*}
Also required are the expressions for $\ln(\psi^n)_k$ and $\ln(\psi^n)_{\xi\xi}$,
\begin{gather*}
\ln(\psi^n)_k = \frac{n}{\psi} (\psi_{x_k} + \psi_z u_k + \psi_{p_m}u_{km}) \, , \\
\ln(\psi^n)_{\xi\xi} = \frac{n}{\psi} (\psi_{\xi\xi} + 2\psi_{\xi z} u_\xi + 2\psi_{\xi p_m} u_{m\xi} + \psi_z u_{\xi\xi} + \psi_{zz}u_\xi^2 + 2\psi_{zp_m}u_\xi u_{m\xi} + \psi_{p_m}u_{m\xi\xi} + \psi_{p_m p_q} u_{m\xi} u_{q\xi}) \\
- \frac{n}{\psi^2} (\psi_{\xi} + \psi_z u_\xi + \psi_{p_m}u_{m\xi})(\psi_{\xi} + \psi_z u_\xi + \psi_{p_q}u_{q\xi}) \, .
\end{gather*}

\begin{lemma}\label{FijwijDu}
	From $b$, define $\overline{b_k} = 2(\xi\cdot \nu)\xi_k'$, and $B_i = a_{k,i}u_k + b_i + 2Mx_i$. Set $\mathcal{T} = \sum F^{ii}$. With this notation, we have
	\begin{subequations}
		\begin{gather}
		0 \geq \frac{F^{ij}\tilde{w}_{ij}}{e^{N|Du|^2}} = \nonumber \\
		2Nw\left[ \Delta u + \frac{n}{\psi} u_k (\psi_{x_k} + \psi_z u_k) \right] + \left[ ng' + \left( g'' - \frac{2(g')^2}{g} \right)F^{ij}u_i u_j \right] u_{\xi\xi} + g F^{ij}u_{jk\xi}F^{kl}u_{li\xi} \label{Good} \\
		- \frac{F^{ij}}{w}\left( g^2u_{i\xi\xi}u_{j\xi\xi} + 2gg'u_{\xi\xi} u_{i\xi\xi}u_j \right) - \frac{2g}{w}a_k u_{k\xi\xi} - \frac{2g}{w} F^{ij} u_{i\xi\xi}B_{j} \label{3rd} \\
		- \frac{(g')^2}{w}u_{\xi\xi}^2 F^{ij}u_iu_j + \frac{ng}{\psi} \left[ \psi_{p_m p_q} u_{m\xi}u_{q\xi} - \frac{(\psi_{p_m}u_{m\xi})^2}{\psi} \right] \label{2-2nd} \\
		- \frac{2g'}{w} u_{\xi\xi} \left( a_k u_k + F^{ij} B_i u_j \right) - \frac{n}{\psi}\psi_{p_m} \left[ g' u_m u_{\xi\xi} + 2\frac{g}{\psi}(\psi_\xi + \psi_z u_\xi) u_{m\xi} - \gamma_0\overline{b_k} u_{km} \right] \label{1-2nd-a} \\
		+ 2(\psi_{\xi p_m} + \psi_{zp_m}u_\xi) u_{m\xi} - \frac{1}{w}a_k a_q u_{kq} - \frac{4Nwg'}{g}|Du|^2 \label{1-2nd-b} \\
		+ \frac{ng}{\psi}\left[ \psi_{\xi\xi} + 2\psi_{\xi z}u_\xi + \psi_{zz}u_\xi^2 - \frac{(\psi_\xi + \psi_z u_\xi)^2}{\psi} \right] + \frac{n}{\psi}(\psi_{x_k} + \psi_z u_k)(a_k + \gamma_0 \overline{b_k}) - \frac{n}{\psi}\psi_{p_m} B_m \label{1st-a} \\
		- \frac{F^{ij}}{w} B_i B_j - \frac{2g'}{g} \left( F^{ij}u_i B_j + a_k u_k \right) + F^{ij}\Bigl[ a_{k,ij}u_k + \overline{b_k}_{,ij}(\gamma_0 u_k + \phi_k) + 2\overline{b_k}_{,i} \phi_{kj} + \overline{b_k}\phi_{ijk} \Bigr] \label{1st-b} \\
		- \frac{2}{w} a_k B_k + 2\sum (a_{k,k} + \gamma_0 \overline{b_k}_{,k}) + 2M\mathcal{T} \, . \label{1st-c}
		\end{gather}
	\end{subequations}
\end{lemma}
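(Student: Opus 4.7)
The proof is a direct (though lengthy) computation. The starting point is the observation that at the interior maximum of $\tilde{w}$ the first-order condition $\tilde{w}_i=0$ is equivalent to $(\ln\tilde{w})_i=0$, so $F^{ij}\tilde{w}_{ij}/\tilde{w}$ collapses to $F^{ij}(\ln\tilde{w})_{ij}$. Writing $\ln\tilde{w}=N|Du|^2+\ln w$ and using $\tilde{w}_i=0$ to trade $2Nu_ku_{ki}$ for $-w_i/w$ inside the cross term gives
\begin{equation*}
\frac{F^{ij}\tilde{w}_{ij}}{e^{N|Du|^2}}\;=\;2Nw\bigl(F^{ij}u_{ki}u_{kj}+F^{ij}u_ku_{kij}\bigr)+F^{ij}w_{ij}-\frac{F^{ij}w_iw_j}{w}.
\end{equation*}
The identity $F^{ij}u_{jl}=\delta^i_l$ yields $F^{ij}u_{ki}u_{kj}=\Delta u$, and the differentiated Monge--Amp\`ere equation $F^{ij}u_{ijk}=\ln(\psi^n)_k$ gives $F^{ij}u_ku_{kij}=u_k\ln(\psi^n)_k$. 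Substituting the chain-rule expansion $\ln(\psi^n)_k=\frac{n}{\psi}(\psi_{x_k}+\psi_zu_k+\psi_{p_m}u_{km})$ splits the latter into the $(x,z)$-piece of the ``Good'' line \eqref{Good} and a $\frac{n}{\psi}\psi_{p_m}u_ku_{km}$ residue to be handled below.

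The two remaining blocks are computed in parallel with Lemma~\ref{Fijwij}. For $F^{ij}w_{ij}$ I would apply $F^{ij}$ to the Hessian of $w=g(u)u_{\xi\xi}+a_ku_k+b+M|x|^2$ and feed in both $F^{ij}u_{ijk}=\ln(\psi^n)_k$ and $F^{ij}u_{ij\xi\xi}=\ln(\psi^n)_{\xi\xi}+F^{ij}u_{jk\xi}F^{kl}u_{li\xi}$, together with the general chain-rule expansions for $\ln(\psi^n)_k$ and $\ln(\psi^n)_{\xi\xi}$. This produces the favourable $gF^{ij}u_{jk\xi}F^{kl}u_{li\xi}$ summand in \eqref{Good}, all the $\psi$-mixed-derivative clusters in \eqref{2-2nd}, \eqref{1-2nd-b} and \eqref{1st-a}, the $F^{ij}[a_{k,ij}u_k+\overline{b_k}_{,ij}(\gamma_0u_k+\phi_k)+\cdots]$ contribution in \eqref{1st-b} (via $b=\overline{b_k}\chi_{x_k}$ and $\chi=\gamma_0u+\phi$, exactly as in the earlier argument), the $-\tfrac{2}{w}a_kB_k$ and $2\sum(a_{k,k}+\gamma_0\overline{b_k}_{,k})$ terms in \eqref{1st-c}, and the bulk term $2M\mathcal{T}$. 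In parallel, $F^{ij}w_iw_j/w$ is computed by squaring $w_i=g'u_iu_{\xi\xi}+gu_{i\xi\xi}+a_ku_{ki}+B_i$ with $B_i=a_{k,i}u_k+b_i+2Mx_i$; this directly produces the explicit negative quadratic-in-third-derivative term $-g^2F^{ij}u_{i\xi\xi}u_{j\xi\xi}/w$ and the $-(g')^2u_{\xi\xi}^2F^{ij}u_iu_j/w$ in \eqref{3rd}--\eqref{2-2nd}, together with a collection of cross terms.

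The key algebraic step is then to apply the first-order condition \eqref{C2 wiDu}, rewritten as $2Nu_ku_{ki}=-\bigl(g'u_iu_{\xi\xi}+gu_{i\xi\xi}+a_ku_{ki}+B_i\bigr)/w$, to eliminate every surviving occurrence of $2Nu_ku_{ki}$: once on the leftover $\frac{n}{\psi}\psi_{p_m}u_ku_{km}$ from the previous paragraph, producing the $-\frac{n}{\psi}\psi_{p_m}g'u_mu_{\xi\xi}$ and $\gamma_0\overline{b_k}$ terms of \eqref{1-2nd-a}, and once inside the $a_ku_{ki}$ cross term of $F^{ij}w_iw_j/w$, producing the $-a_ka_qu_{kq}/w$ and $-4Nwg'|Du|^2/g$ contributions of \eqref{1-2nd-b}. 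The main obstacle is purely organisational: for each cross term of schematic form $(2Nu_ku_{ki})\cdot(\text{something})$ produced along the way one must decide whether to leave it alone or to route it through \eqref{C2 wiDu}; the correct rule is to eliminate $2Nu_ku_{ki}$ precisely when it is not already divided by $w$. Once each piece has been placed, the stated grouping follows term by term, and the overall inequality $F^{ij}\tilde{w}_{ij}\le 0$ holds because $F^{ij}$ is positive definite (by strict convexity of $u$) while $D^2\tilde{w}$ is negative semidefinite at the maximum.
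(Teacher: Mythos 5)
Your overall framework is the one the paper intends (a direct computation ``by the same methods as Lemma~\ref{Fijwij}''), and your opening reduction is correct: at the maximum $\tilde w_i=0$, so
\[
\frac{F^{ij}\tilde w_{ij}}{e^{N|Du|^2}}=2Nw\bigl(F^{ij}u_{ki}u_{kj}+F^{ij}u_ku_{kij}\bigr)+F^{ij}w_{ij}-\frac{F^{ij}w_iw_j}{w}
=2Nw\bigl(\Delta u+u_k\ln(\psi^n)_k\bigr)+F^{ij}w_{ij}-\frac{F^{ij}w_iw_j}{w},
\]
and your handling of the residue $2Nw\frac n\psi\psi_{p_m}u_ku_{km}=-\frac n\psi\psi_{p_m}w_m$ is right: its $gu_{m\xi\xi}$ and $a_ku_{km}$ pieces cancel the matching pieces of $g\ln(\psi^n)_{\xi\xi}$ and $a_k\ln(\psi^n)_k$, leaving the $-\frac n\psi\psi_{p_m}g'u_mu_{\xi\xi}$ and $-\frac n\psi\psi_{p_m}B_m$ terms of \eqref{1-2nd-a} and \eqref{1st-a}.

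However, your ``routing rule'' for the first-order condition is not correct, and followed literally it does not land on the stated right-hand side. The block $-F^{ij}w_iw_j/w$ requires no use of \eqref{C2 wiDu} at all: expanding $w_i=g'u_iu_{\xi\xi}+gu_{i\xi\xi}+a_ku_{ki}+B_i$ and contracting gives verbatim all of \eqref{3rd}, the first terms of \eqref{2-2nd} and \eqref{1-2nd-a}, and the $1/w$-terms $-\frac1wa_ka_qu_{kq}$, $-\frac1wF^{ij}B_iB_j$, $-\frac2wa_kB_k$ of \eqref{1-2nd-b}, \eqref{1st-b}, \eqref{1st-c} (so these come from here, not from $F^{ij}w_{ij}$ and not from any elimination). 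The second, indispensable use of \eqref{C2 wiDu} --- the one your plan omits --- is on the term $2g'F^{ij}u_iu_{j\xi\xi}$ inside $F^{ij}w_{ij}$: substituting $u_{j\xi\xi}=-\frac1g\bigl(g'u_ju_{\xi\xi}+a_ku_{kj}+B_j+2Nwu_ku_{kj}\bigr)$, the exact analogue of the step in Lemma~\ref{Fijwij} but with the extra $2Nwu_ku_{kj}$ because now $w_j=-2Nwu_ku_{kj}$ rather than $0$, is what generates the coefficient $g''-\frac{2(g')^2}{g}$ in \eqref{Good}, the term $-\frac{2g'}{g}\bigl(F^{ij}u_iB_j+a_ku_k\bigr)$ in \eqref{1st-b}, and the only other $N$-dependent term $-\frac{4Nwg'}{g}|Du|^2$ in \eqref{1-2nd-b}. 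Your proposed substitute --- applying the condition ``inside the $a_ku_{ki}$ cross term of $F^{ij}w_iw_j/w$'' --- produces none of these (and is not how $-\frac1wa_ka_qu_{kq}$ arises); likewise the $\gamma_0\overline{b_k}u_{km}$ term of \eqref{1-2nd-a} comes from $\gamma_0\overline{b_k}F^{ij}u_{ijk}=\gamma_0\overline{b_k}\ln(\psi^n)_k$ inside $F^{ij}b_{ij}$, and $-\frac2wa_kB_k$ does not come from $F^{ij}w_{ij}$. So the skeleton is right, but the decisive substitution creating the $N$-term and the $g''-2(g')^2/g$ grouping is missing and must be put in before the term-by-term matching closes.
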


\begin{proof}
	The proof is simple but lengthy, making use of the same methods as Lemma \ref{Fijwij}.
\end{proof}

We will now consider each of the lines \eqref{Good} to \eqref{1st-b} in Lemma \ref{FijwijDu} individually, beginning with \eqref{Good}. Let $\varepsilon\in(0,1)$. It may be assumed, as in \cite{LTU} that \[
\sup_{\Omega\times\mathbb{S}^{n-1}} (a_k u_k + b + M|x|^2) \leq \frac{\varepsilon}{2}gu_{\xi\xi} \, ,
\] otherwise we would have successfully found an upper bound for $u_{\xi\xi}$. This in turn implies
\begin{equation}\label{wBound}
\frac{gu_{\xi\xi}}{1 + \varepsilon} < \left( 1 - \frac{\varepsilon}{2} \right) gu_{\xi\xi} \leq w < (1 + \varepsilon)gu_{\xi\xi} \leq (1 + \varepsilon)gu_{\eta\eta}  \, ,
\end{equation}
and since $w(x_0,\xi)$ is a maximum point,
\begin{equation}\label{uetaetaBound}
w(x_0,\xi) \geq w(x_0,\eta) \implies u_{\eta\eta} \leq (1 + \varepsilon)u_{\xi\xi} \, ,
\end{equation}
where $\eta$ is the unit vector such that $u_{\eta\eta}$ is the maximum eigenvalue of $D^2u(x_0)$. The upshot of \eqref{wBound} and \eqref{uetaetaBound} is that
\begin{equation}\label{GoodImproved}
2N w \Delta u \geq \frac{2Ngu_{\eta\eta}^2}{(1 + \varepsilon)^2} \geq \frac{Ngu_{\eta\eta}^2}{2} \, .
\end{equation}

Next, to deal with \eqref{3rd}, we note that \eqref{wBound} and \eqref{uetaetaBound} also imply
\begin{equation}\label{wchain}
\frac{g}{w} < \frac{1 + \varepsilon}{u_{\xi\xi}} < \frac{(1 + \varepsilon)^2}{u_{\eta\eta}} < \frac{1 + 3\varepsilon}{u_{\eta\eta}} \, .
\end{equation}
Applying \eqref{wchain} and the Cauchy-Schwarz and Young inequalities to \eqref{3rd}, we see that
\begin{gather}
\left( F^{ij}\frac{w_iw_j}{w} = \right) \ \frac{F^{ij}}{w}\left( g^2u_{i\xi\xi}u_{j\xi\xi} + 2gg'u_{\xi\xi} u_{i\xi\xi}u_j \right) + \frac{2g}{w}a_k u_{k\xi\xi} + \frac{2g}{w} F^{ij} u_{i\xi\xi}B_{j} \nonumber \\
\leq \frac{g}{u_{\eta\eta}}(1 + 3\varepsilon)^2 F^{ij}u_{i\xi\xi}u_{j\xi\xi} + \frac{a_ka_qu_{kq}}{\varepsilon w} + \frac{F^{ij}B_iB_j}{\varepsilon w} + \frac{1+\varepsilon}{\varepsilon} \frac{(g')^2}{g} u_{\xi\xi} F^{ij}u_iu_j \label{Ineq1} \, .
\end{gather}
Moreover, we know from \eqref{C2 wiDu} that
\begin{equation}\label{Ineq2}
F^{ij}\frac{w_iw_j}{w} = 4N^2 w u_ku_qu_{kq} \, .
\end{equation}
Taking $\varepsilon(6+9\varepsilon)$ times \eqref{Ineq2} and combining with \eqref{Ineq1} gives us
\begin{gather*}
(1 + 3\varepsilon)^2 F^{ij} \frac{w_i w_j}{w} \leq \frac{g}{u_{\eta\eta}}(1 + 3\varepsilon)^2 F^{ij}u_{i\xi\xi}u_{j\xi\xi} + \frac{a_ka_qu_{kq}}{\varepsilon w} + \frac{F^{ij}B_iB_j}{\varepsilon w} + \frac{1+\varepsilon}{\varepsilon} \frac{(g')^2}{g} u_{\xi\xi} F^{ij}u_iu_j \\
+ 4N^2 \varepsilon(6 + 9\varepsilon) w u_ku_qu_{kq} \, ,
\end{gather*}
and putting everything together,
\begin{gather}\label{3rdImproved}
\frac{F^{ij}}{w}\left( g^2u_{i\xi\xi}u_{j\xi\xi} + 2gg'u_{\xi\xi} u_{i\xi\xi}u_j \right) + \frac{2g}{w}a_k u_{k\xi\xi} + \frac{2g}{w} F^{ij} u_{i\xi\xi}B_{j} \\
\leq \frac{g}{u_{\eta\eta}} F^{ij}u_{i\xi\xi}u_{j\xi\xi} + 4N^2\frac{\varepsilon(6 + 9\varepsilon)}{(1 + 3\varepsilon)^2} w u_ku_qu_{kq} + \frac{a_ka_qu_{kq}}{\varepsilon^2 (6 + 9\varepsilon) w} + \frac{F^{ij}B_iB_j}{\varepsilon^2 (6 + 9\varepsilon) w} + \frac{1+\varepsilon}{\varepsilon^2 (6 + 9\varepsilon)} \frac{(g')^2}{g} u_{\xi\xi} F^{ij}u_iu_j \nonumber \, ,
\end{gather}
as in \cite{LTU}. \\

Next we deal with \eqref{2-2nd}. Applying $w \geq gu_{\xi\xi}$ (provided $M$ is large enough),
\begin{equation}\label{2-2ndImproved1}
\frac{(g')^2}{w}u_{\xi\xi}^2 F^{ij}u_iu_j \leq \frac{(g')^2}{g}u_{\xi\xi} F^{ij}u_iu_j \, ,
\end{equation}
which will be absorbed into a term in \eqref{Good}. Further,
\begin{equation}\label{2-2ndImproved2}
\frac{ng}{\psi} \left[ \psi_{p_m p_q} u_{m\xi}u_{q\xi} - \frac{(\psi_{p_m}u_{m\xi})^2}{\psi} \right] \leq \frac{n^3g}{\psi} \left( \|D^2_p\psi\|_\infty + \frac{\|D_p\psi\|_\infty^2}{\psi} \right) u_{\eta\eta}^2 \leq C(n,\psi) gu_{\eta\eta}^2 \, .
\end{equation}

We consider \eqref{1-2nd-a} and \eqref{1-2nd-b} together. Using $w \geq gu_{\xi\xi}$ and \eqref{wchain} again, we have
\begin{gather}
\frac{2g'}{w} u_{\xi\xi} \left( a_k u_k + F^{ij} B_i u_j \right) + \frac{n}{\psi}\psi_{p_m} \left[ g' u_m u_{\xi\xi} - 2\frac{g}{\psi}(\psi_\xi + \psi_z u_\xi) u_{m\xi} - \gamma_0\overline{b_k} u_{km} \right]  \nonumber \\
- 2(\psi_{\xi p_m} + \psi_{zp_m}u_\xi) u_{m\xi} + \frac{1}{w}a_k a_q u_{kq} - \frac{4Nwg'}{g}|Du|^2 \nonumber \\
\leq \frac{2g'}{g} \left( C_1'\|a\|_\infty + C_1' \|B\|_\infty \mathcal{T} \right) + C(n,\psi,\gamma_0,C_1')(Ng'+g + 1)u_{\eta\eta} + \frac{1 + 3\varepsilon}{g}C(n,a) \, . \label{1-2ndImproved}
\end{gather}

Finally, the terms of \eqref{1st-a}, \eqref{1st-b} and \eqref{1st-c} are bounded thus:
\begin{gather}
\eqref{1st-a} + \eqref{1st-b} + \eqref{1st-c} \nonumber \\
\geq - C(n,\psi,C_1')g - C(n,\psi,\gamma_0,\phi,C_1',a,Da,D\overline{b_k}) - \frac{1}{w}(2a_kB_k + F^{ij}B_iB_j) \nonumber \\
- \frac{2g'}{g} \left[ C_1'\|a\|_\infty + C_1' \|B\|_\infty \mathcal{T} \right] + \Bigl[ 2M - C(\gamma_0,\phi,C_1',D^2a,\overline{b_k},D\overline{b_k},D^2\overline{b_k}) \Bigr]\mathcal{T} \, . \label{1stImproved}
\end{gather}

To deal with the 3rd derivative terms, we reason as in \cite{LTU} that \[
F^{ij}u_{i\xi\xi}u_{j\xi\xi} = F^{ij}u_{ik\xi}u_{jl\xi}\xi_k\xi_l \leq \Lambda \, ,
\] where $\Lambda$ is the maximum eigenvalue of the matrix $F^{ij}u_{ik\xi}u_{jl\xi}$. Writing $F^{\eta\eta} = 1/u_{\eta\eta}$ and noting that the trace of a positive definite matrix dominates its individual eigenvalues, we have
\begin{equation}\label{3rdImproved!}
g F^{ij}u_{jk\xi}F^{kl}u_{li\xi} \geq gF^{\eta\eta}\Lambda \geq \frac{g}{u_{\eta\eta}} F^{ij}u_{i\xi\xi}u_{j\xi\xi} \, .
\end{equation}

Putting \eqref{GoodImproved}, \eqref{3rdImproved}, \eqref{2-2ndImproved1}, \eqref{2-2ndImproved2}, \eqref{1-2ndImproved}, \eqref{1stImproved} and \eqref{3rdImproved!} together and choosing $\varepsilon = 1/N^2$, $N>1$, we find
\begin{gather*}
0 \geq \frac{F^{ij}\tilde{w}_{ij}}{e^{N|Du|^2}} \geq \frac{Ngu_{\eta\eta}^2}{2} - 4Ngu_{\eta\eta} \frac{n}{\psi} u_k (\psi_{x_k} + \psi_z u_k) + \left[ ng' + \left( g'' - N^4 \frac{(g')^2}{g} \right)F^{ij}u_i u_j \right] u_{\xi\xi} \\
- 4N^2\frac{\varepsilon(6 + 9\varepsilon)}{(1 + 3\varepsilon)^2} w u_ku_qu_{kq} - \frac{a_ka_qu_{kq}}{\varepsilon^2(6 + 9\varepsilon) w} - \frac{F^{ij}B_iB_j}{\varepsilon^2(6 + 9\varepsilon) w} - C(n,\psi) gu_{\eta\eta}^2 \\
- C(n,\psi,\gamma_0,C_1')(g'+g + 1)u_{\eta\eta} - \frac{4g'}{g} \left( C_1'\|a\|_\infty + C_1' \|B\|_\infty \mathcal{T} \right) - \frac{1 + 3\varepsilon}{g}C(n,a) - C(n,\psi,C_1')g \\
- C(n,\psi,\gamma_0,\phi,C_1',a,Da) - \frac{1}{w}(2a_kB_k + F^{ij}B_iB_j) + \Bigl[ 2M - C(\gamma_0,\phi,C_1',D^2a,\overline{b_k},D\overline{b_k},D^2\overline{b_k}) \Bigr]\mathcal{T} \\
\geq \frac{1}{2}Ngu_{\eta\eta}\left( u_{\eta\eta} - C(n,\psi,C_1') \right) + \left[ ng' + \left( g'' - \frac{1 + 4\varepsilon}{\varepsilon} \frac{(g')^2}{g} \right)F^{ij}u_i u_j \right] u_{\xi\xi} - C(n,\psi,C_1')g u_{\eta\eta}^2 \\
- C(n,\psi,\gamma_0,C_1')(Ng'+g + 1)u_{\eta\eta} - \frac{N^2C(n,a)}{g} - \frac{4g'}{g} C_1'\|a\|_\infty - C(n,\psi,C_1')g - C(n,\psi,\gamma_0,\phi,C_1',a,Da) \\
- \frac{C(n,a) \|B\|_{\infty}}{gu_{\eta\eta}} + \left( 2M - C(\gamma_0,\phi,C_1',D^2a,\overline{b_k},D\overline{b_k},D^2\overline{b_k}) - \frac{4g'}{g}C_1'\|B\|_{\infty} - \frac{C(n) N^2 \|B\|_{\infty}^2}{gu_{\eta\eta}} \right)\mathcal{T} > 0 \, ,
\end{gather*}
provided $u_{\eta\eta} > C(n,\psi,C_1')$ and that we choose $M$ and $N$ large enough. Notice that we don't need to make an explicit choice for $g$ to obtain this estimate, although it is convenient to choose
\begin{equation}\label{gChoice}
g(u) = \sqrt[1-N^4]{\tilde{M} + (1 - N^4)u} \, ,
\end{equation}
with $\tilde{M}$ possibly different from $M$. \\

The preceding argument demonstrates that the maximum of $\tilde{w}$ occurs on the boundary $\d\Omega$. Assuming that the maximum occurs at $x_0\in\Gamma^-$, we have at $x_0$,
\begin{gather*}
0 \geq \frac{\tilde{w}_\nu}{e^{N|Du|^2}} = 2Nwu_ku_{k\nu} + gu_{\nu\xi\xi} + g'u_\nu u_{\xi\xi} + a_{k,\nu}u_k + b_\nu + 2M\inprod{x}{\nu} \\
\implies u_{\nu\xi\xi} \leq \left( 4NC(n,C_1',C_3,C_4') - \frac{g'}{g}u_{\nu} \right) u_{\xi\xi} + O(1) \, ,
\end{gather*} since $a_k = 0$ when $\xi\perp\nu$. Now, \eqref{unuxixi} implies
\begin{gather*}
\kappa_\xi u_{\nu\nu} + (\chi_u - 2\kappa_\xi)u_{\xi\xi} + O(1) = u_{\nu\xi\xi} \leq \left( 4NC(n,C_1',C_3,C_4') - \frac{g'}{g}u_{\nu} \right) u_{\xi\xi} + O(1) \\
\implies \kappa_\xi u_{\nu\nu} + \left( \chi_u - 2\kappa_\xi - 4NC(n,C_1',C_3,C_4') + \frac{g'}{g}\chi \right)u_{\xi\xi} \leq O(1) \, .
\end{gather*}
Therefore, Proposition \ref{th-C2Du} holds provided \[
2\kappa_\xi + 4NC(n,C_1',C_3,C_4') < \chi_u + \frac{g'}{g}\chi \, .
\] In particular, if we choose $g$ according to \eqref{gChoice}, Proposition \ref{th-C2Du} holds provided \[
2\kappa_\xi + 4NC(n,C_1',C_3,C_4') < \gamma_0 + \max\left\{ 0 , \min_{\Gamma^-} \frac{\gamma_0 u + \phi}{\tilde{M} + (1 - N^4)u} \right\} \, ,
\] analogously to Proposition \ref{th-C2}.

\begin{remark}\label{psi(x,u)}
	If $\psi = \psi(x,u)$ only, then we can make the calculation substantially simpler by noticing that the worst 2nd derivative terms in equations \eqref{2-2nd} to \eqref{1st-c} disappear when we set to zero any derivatives of $\psi$ with respect to the $p$ variables. We can therefore use the simpler barrier function $w$ from Proposition \ref{th-C2}. The expression from Lemma \ref{Fijwij} therefore becomes
	\begin{gather*}
	F^{ij}w_{ij} = \left[ ng' + \left( g'' - \frac{2(g')^2}{g} \right)F^{ij}u_i u_j \right]u_{\xi\xi} - \frac{2g'}{g}F^{ij}u_i \left( u_k a_{k,j} + b_j + 2Mx_j \right) - \frac{2g'}{g} a_k u_k \\
	+ g\left( \frac{n\psi_z}{\psi}u_{\xi\xi} + O(1) + F^{ij}u_{jk\xi}F^{kl}u_{li\xi} \right) + F^{ij}\Bigl[ a_{k,ij}u_k + \overline{b_k}_{,ij}(\gamma_0 u_k + \phi_k) + 2\overline{b_k}_{,i} \phi_{kj} + \overline{b_k}\phi_{ijk} \Bigr] \\
	+ (a_k + \gamma_0 \overline{b_k}) \ln(\psi^n)_k + 2\sum (a_{k,k} + \gamma_0 \overline{b_k}_{,k}) + 2M\sum F^{ii} \, ,
	\end{gather*}
	where the $\frac{n\psi_z}{\psi}u_{\xi\xi} + O(1)$ terms are those arising from $\ln(\psi^n)_{\xi\xi}$. Since $\psi_z \geq 0$ the extra second derivative term is positive and hence can be ignored. The other extra terms are all bounded since they depend only on $n, \psi, u$ and $Du$. Therefore the tangential $C^2$ estimate holds under the weaker curvature condition \ref{CurvatureCondition}, at the expense of $M$ merely being a slightly larger constant.
\end{remark}

\begin{remark}
	As in Section \ref{sec-existence}, if the maximum of $w$ occurs on $\Gamma^+$ then we can estimate the derivatives in the standard way, see \cite{CNS-1}. The subsolution required for the double normal estimate is also sufficient to allow us to apply the method of \cite{CNS-1} to bound $|D^2 u|$ on $\Gamma^+$, which itself requires the existence of a far less complicated subsolution.
\end{remark}

\subsection{Existence of Solutions for \eqref{TheEquationDu}}

The existence of solutions now follows in the same way as in \ref{sec-existence}. The argument of Section 7(d) of \cite{CNS-1} using the auxiliary function defined in equations (7.30) and (7.31) is applicable to our case, showing that the interior $C^2$ estimates reduce to boundary estimates. Therefore, from the a priori estimates above, we can employ the method of continuity to show that under the conditions of Propositions \ref{C0Du}, \ref{C1Du} and \ref{th-C2Du} there exists a strictly convex $u:\Omega \longrightarrow \R$ satisfying \eqref{TheEquationDu}. First, note that under these conditions, Theorem 3.2 from \cite{LTHJBE} or Section 6 from \cite{HolderEstimates} imply that for any $\alpha\in(0,1)$, \[
\|u\|_{C^{2,\alpha}} \leq K \, ,
\] where $K$ is a constant depending on $n,\Omega,\psi$ and $C_2'$.

We can now directly apply Theorem 17.28 from \cite{GilbargTrudinger} to conclude that there exists a strictly convex solution $u \in C^{2,\alpha}(\Omega)$ of \eqref{TheEquationDu}. Thus Corollary \ref{Cor-ExistenceDu} is proved.


\section{The Equation of Prescribed Gauss Curvature}\label{sec-PrescribedGC}

The equation of prescribed Gauss curvature \eqref{PrescribedGauss} falls into the category of equations for which we can prove existence of solutions using the results of the previous section. However, due to the form of the right-hand side, we are able to simplify the proofs substantially. In particular, because the function \[
\psi(x,z,p) = K^{1/n}(x) (1+|p|^2)^{(n+2)/2n}
\] is convex in the $p$ variables, we can eliminate the 2nd-derivative-squared terms from the square bracket in \eqref{2-2nd} using a trick, meaning we can start the process with a better auxiliary function, as mentioned in \cite{LTU} and avoid settling for the nasty bound in \eqref{2-2ndImproved2}. The upshot of this is that the resulting condition on the curvatures of $\Gamma^-$ is much weaker than in the full generality of Proposition \ref{th-C2Du}.

The results of this section apply to more general $\psi(x,u,Du)$ which are convex with respect to $Du$. However, we focus here purely on the equation of prescribed Gauss curvature.

\begin{proposition}
	Suppose $u \in C^4(\Omega)\cap C^3(\overline{\Omega})$ is a strictly convex solution of \eqref{PrescribedGauss}. Suppose that Condition \ref{PGCondition} holds, and that there exists a subsolution satisfying Condition \ref{SubsolutionCondition}. Then there exist constants $C^G_0, C^G_1, C^G_2$ depending on $n,\Omega,\Gamma^\pm,\psi$ and $\phi$ such that \[
	\sup_{\overline{\Omega}}|u| \leq C^G_0 \, , \quad\sup_{\overline{\Omega}}|D u| \leq C^G_1 \, , \quad \sup_{\overline{\Omega}}|D^2 u| \leq C^G_2 \, ,
	\] provided that \[
	2\max_{\substack{i = 1,...,n-1 \\ \bar{x} \in\Gamma^-}} \kappa_i(\bar{x}) < \gamma_0 + \max \left\{ 0 , \min_{\Gamma^-}\frac{\gamma_0 u + \phi}{M-u} \right\} \, ,
	\] where $\kappa_i(\bar{x})$ is the $i^{\text{th}}$ principal curvature of $\Gamma^-$ at $\bar{x}$, and $M = M(n,\Omega,\Gamma^\pm,\psi,\phi)$ is a constant.
\end{proposition}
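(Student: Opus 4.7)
The plan is to verify the hypotheses of Propositions \ref{C0Du}, \ref{C1Du} and a suitably modified version of Proposition \ref{th-C2Du}, taking advantage of the fact that $\psi(x,p)=K^{1/n}(x)(1+|p|^2)^{(n+2)/(2n)}$ is convex in $p$ in order to relax the curvature hypothesis back to Condition \ref{CurvatureCondition}.

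The $C^0$ bound follows immediately from Proposition \ref{C0Du} once Condition \ref{StructureConditions} is verified with the natural choice $g(x)=K(x)$ and $h(p)=(1+|p|^2)^{-(n+2)/2}$: then $\psi^n=g/h$, $\int_{\R^n}h=\omega_n$, and \eqref{GClessthannball} supplies \eqref{StructureIneq}. For the $C^1$ bound, I would appeal to the subsolution provided by Condition \ref{SubsolutionCondition}: the standard Monge-Amp\`ere comparison principle, together with a Hopf-lemma argument at a putative interior maximum of $\underline{u}-u$ on $\Gamma^-$ (where $\gamma_0>0$ forbids a strict boundary maximum), forces $\underline{u}\leq u\leq 0$ on $\overline{\Omega}$. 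Since both vanish on $\Gamma^+$, this yields $\underline{u}_\nu\leq u_\nu\leq 0$ there, and since $\underline{u}\in C^3(\overline{\Omega})$ we obtain a bound on $|Du|$ on $\Gamma^+$, which extends globally by convexity.

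The crux is the $C^2$ estimate. On $\Gamma^+$, the Dirichlet theory of \cite{CNS-1} applies using $\underline{u}$. On $\Gamma^-$, the mixed derivative is controlled by \eqref{uxinu} and the double normal derivative by Proposition \ref{Double Normal Estimate}, whose proof is indifferent to the form of $\psi$. What remains is the double tangential estimate. I would revisit the argument of Section \ref{sec-tan} using the simpler barrier $w=g(u)u_{\xi\xi}+a_ku_k+b+M|x|^2$ with $g(u)=1/(M-u)$, rather than the exponentially weighted $\tilde{w}$ of Section \ref{sec-C2Du}. The extra contributions to $F^{ij}w_{ij}$ arising from the $p$-dependence of $\psi$ include, most prominently, the quadratic form $\frac{ng}{\psi}\psi_{p_mp_q}u_{m\xi}u_{q\xi}$ coming from $\ln(\psi^n)_{\xi\xi}$. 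A direct computation of $\psi_{p_ip_j}$ shows it is positive semi-definite in every dimension (the quadratic form in $v$ reduces to $1+(2/n)|p|^2$ times a nonnegative expression), so this term may simply be discarded. The remaining $p$-derivative terms are either linear in $D^2u$ and absorbable into $2M\mathcal{T}$ by enlarging $M$, or amenable to a Cauchy--Schwarz estimate against the positive third-order quantity $gF^{ij}u_{jk\xi}F^{kl}u_{li\xi}$ after using the critical-point relation to eliminate $u_{m\xi\xi}$.

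The main obstacle is to carry out this absorption \emph{without} invoking the exponential weight $e^{N|Du|^2}$ of Section \ref{sec-C2Du}, since that weight is precisely what forces the strengthened threshold $2\kappa_\xi+\tilde{C}<\ldots$ of Condition \ref{CurvatureConditionDu}. Once the absorption is arranged, the resulting boundary inequality at $x_0\in\Gamma^-$ reduces to that of Proposition \ref{th-C2}, so the original Condition \ref{CurvatureCondition} suffices, with $M$ chosen large enough to dominate $\|D_p\psi\|_\infty$ and $\|D_p^2\psi\|_\infty$ through the already-established $C_0^G$ and $C_1^G$. Collecting the three estimates yields the constants $C_0^G$, $C_1^G$, $C_2^G$ as claimed.
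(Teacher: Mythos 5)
Your $C^0$ step is exactly the paper's, and your $C^1$ detour through the subsolution plus a comparison/Hopf argument is serviceable in spirit (the paper instead uses \eqref{K=0} to verify Condition \ref{StructureConditionGradient} with $\beta=1$ and quotes Proposition \ref{C1Du}; note your route makes $C^G_1$ depend on $\max_{\Gamma^+}|D\underline{u}|$, a dependence the stated proposition avoids). The genuine gap is in the double tangential estimate on $\Gamma^-$, which is the whole point of the proposition.

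Running Section \ref{sec-tan} with the unweighted barrier $w=g(u)u_{\xi\xi}+a_ku_k+b+M|x|^2$ does not close. Discarding $\frac{ng}{\psi}\psi_{p_mp_q}u_{m\xi}u_{q\xi}\ge 0$ by convexity is fine, but $\ln(\psi^n)_{\xi\xi}$ also contributes $\frac{ng}{\psi}\psi_{p_m}u_{m\xi\xi}$, and eliminating $u_{m\xi\xi}$ via the critical-point relation $w_m=0$ produces, among other things, $-\frac{n}{\psi}\psi_{p_m}u_m\,g'u_{\xi\xi}$, which for the Gauss-curvature $\psi$ equals $-(n+2)\frac{|Du|^2}{1+|Du|^2}\,g'u_{\xi\xi}$. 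This is linear in $u_{\xi\xi}$, of the bad sign, and its coefficient exceeds that of the only favourable linear term $ng'u_{\xi\xi}$ available in Lemma \ref{Fijwij} as soon as $|Du|^2>n/2$. It is not ``absorbable into $2M\mathcal{T}$'': $\mathcal{T}$ is bounded below only by the constant $n/\|\psi\|_\infty$, so enlarging $M$ buys a large constant, not a term growing like $u_{\xi\xi}$; the same objection applies to the other surviving terms of the form $\psi_{p_m}u_{km}$ (from $\ln(\psi^n)_k$ and from the critical-point substitution), which grow like the largest eigenvalue $u_{\eta\eta}$. Nor does Cauchy--Schwarz against $gF^{ij}u_{jk\xi}F^{kl}u_{li\xi}$ reach them: after the substitution they are second order and uncontracted with $F^{ij}$, and estimating $\psi_{p_m}u_{m\xi\xi}$ directly by Cauchy--Schwarz costs a factor of order $D_p\psi\cdot D^2u\,D_p\psi\sim u_{\eta\eta}$, which you again have nothing to absorb. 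So ``once the absorption is arranged'' is precisely the step you have not supplied.

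The paper's actual mechanism is different on exactly this point: it keeps a gradient term in the barrier, but an \emph{additive} one, $w=\frac{u_{\xi\xi}}{M-u}+a_ku_k+b+M|x|^2+N|Du|^2$, and works with the concave operator $F[D^2u]=\det[D^2u]^{1/n}$, i.e. $F^{ij}=\frac{\psi}{n}u^{ij}$, so that \eqref{H2LessnA2} together with convexity of $\psi$ in $p$ removes the quadratic terms, while $2NF^{ij}u_{ki}u_{kj}=\frac{2N\psi}{n}\Delta u$ supplies a positive term linear in $u_{\eta\eta}$ (with a factor matching the degenerating factor $K^{1/n}$ carried by the bad terms) that absorbs all of the remaining linear second-derivative terms for $N$ large. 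Because the weight is additive rather than the multiplicative $e^{N|Du|^2}$ of Section \ref{sec-C2Du}, its normal derivative on $\Gamma^-$ is $2Nu_ku_{k\nu}=O(1)$ by \eqref{uxinu} and Proposition \ref{Double Normal Estimate} (which is why the double-normal bound must be established first), so the coefficient of $u_{\xi\xi}$ in the boundary inequality is unchanged and the weaker Condition \ref{CurvatureCondition} suffices. That is the missing idea your proposal gestures at but does not provide.
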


\begin{proof}
	For the $C^0$ estimate, \eqref{GClessthannball} from Condition \ref{PGCondition} suffices since we may take $g(x) = K(x)$ and $h(p) = (1 + |p|)^{-(n+2)/2}$ in Condition \ref{StructureConditions}.
	
	Similarly, \eqref{K=0} from Condition \ref{PGCondition} suffices to prove the global gradient bound, as in \cite{TUDirichletPGC}. This is because if $K$ is a smooth function with $K = 0$ on $\Gamma^+$, there is a neighbourhood of $\Gamma^+$ such that $\eqref{StructureGradient}$ holds with $\beta = 1$ and $Z$ being a sufficiently large constant function.
	
	The key point in the $C^2$ estimate is as follows. Taking a cue from \cite{TUDirichletPGC}, consider the equivalent form of \eqref{PrescribedGauss}, \[
	F[D^2u] = \det[D^2u]^{1/n} = \psi = K^{1/n} (1 + |Du|^2)^{(n+2)/2n} \, .
	\] Define \[
	F^{ij} = \frac{1}{n} \det[D^2u]^{1/n} u^{ij} = \frac{\psi}{n} u^{ij} \, ,
	\] so that $F^{ij}u_{ij} = \psi$. By Jacobi's formula and the same process as in the derivation of \eqref{Uijxx},
	\begin{gather}
	F^{ij}_\xi = \frac{\psi_\xi}{\psi}F^{ij} - \frac{n}{\psi}F^{ik}F^{jl}u_{kl} \, , \nonumber \\
	F^{ij}u_{ij\xi\xi} = \psi_{\xi\xi} + \frac{1}{\psi} \left[ nF^{ik}F^{jl}u_{ij\xi}u_{kl\xi} - (F^{ij}u_{ij\xi})^2 \right] \, . \label{H2LessnA2}
	\end{gather}
	The last term on the right-hand side of \eqref{H2LessnA2} is nonnegative by the Cauchy-Schwarz inequality. Thus \[
	F^{ij}u_{ij\xi\xi} \geq (1 + |Du|^2)^{1/n} \left[ (K^{1/n})_{\xi\xi}\sqrt{1 + |Du|^2} + 2(K^{1/n})_{\xi} \frac{n+2}{n} \frac{u_mu_{m\xi}}{\sqrt{1 + |Du|^2}} + K^{1/n} \frac{n+2}{n} \frac{u_mu_{m\xi\xi}}{\sqrt{1 + |Du|^2}} \right] \, ,
	\] and so it suffices to employ the barrier \[
	w = \frac{u_{\xi\xi}}{M - u} + a_ku_k + b + M|x|^2 + N|Du|^2 \, ,
	\] where $N$ is sufficiently large to deal with the remaining second derivative terms arising from $\psi$.
\end{proof}


\subsection{Boundary Conditions}\label{sec-PGCBC}

Consider $u$ as the graph of a surface $\Sigma$ with coordinates $(x,u(x))\in\R^{n+1}$, $x\in\Omega$, $u \leq 0$, and downward-pointing normal $\normal_{\Sigma}$. Take $e_{n+1}$ to be the unit vector $(0,...,0,1) \in \R^{n+1}$, $\alpha$ to be the angle between $(\nu,0)$ and $\normal_{\Sigma}$ on $\Gamma^-$, and $\beta$ to be the angle between $\normal_{\Sigma}$ and $-e_{n+1}$ on $\Gamma^-$. Impose the inner boundary condition \[
\frac{\cos\alpha}{\cos\beta} \equiv \chi = \gamma_0u + \phi \, .
\] Since \[
\frac{\cos\alpha}{\cos\beta} = \frac{(\nu,0)\cdot\normal_{\Sigma}}{-e_{n+1}\cdot\normal_{\Sigma}} = \frac{(\nu,0)\cdot(Du,-1)}{-e_{n+1}\cdot(Du,-1)} = u_{\nu} \, ,
\] we can now solve the equation of prescribed Gauss curvature \eqref{PrescribedGauss} to obtain a $C^{2,\alpha}$ solution $u$ with this Neumann boundary condition on $\Gamma^-$, along with a homogeneous boundary condition on $\Gamma^+$.


\section{Parabolic Flows}\label{sec-Parabolic}

In this section, we consider a parabolic analogue of the Monge-Amp\`ere equation given by \eqref{TheFlowEquation} on the annular cylinder $\Omega_T$, where $0 < T < \infty$, $u_0\in C^\infty(\overline{\Omega})$ is a strictly convex solution of $\det[D^2u_0] = \psi^n(x,u_0,Du_0)$ on $\Omega$, $\vartheta'(t) < 0$ for all $t$, and for each fixed $x\in\Gamma^-$ and all $t$,  $\phi_t(x,t) > 0$. We assume additionally that $\psi_z(x,z,p) \geq 0$. The equation is related to the inverse Gauss curvature flow:
\begin{example}
	Let $X$ be a hypersurface in $\R^{n+1}$ expanding under the inverse Gauss curvature flow,
	\begin{equation}\label{IGCF}
	\frac{\d X}{\d t} = \frac{1}{K} \normal \, ,
	\end{equation}
	where $K$ is the Gauss curvature of $X$ and $\normal$ is the outward-pointing unit normal vector to $X$. Suppose that $X$ may be written as a graph evolving over the annular domain $\Omega$. Then $X(t) = (x,u(x,t))$, $(x,t)\in\Omega_T$, $u \leq 0$. By taking the inner product of \eqref{IGCF} with $\normal$ and using the formula for the Gauss curvature of a graph, \[
	- \frac{u_t}{\sqrt{1 + |Du|^2}} = \frac{(1 + |Du|^2)^{(n+2)/2}}{\det[D^2u]} \, .
	\] Upon rearranging, this is equivalent to \eqref{TheFlowEquation} with $\psi^n(x,u,Du) = (1 + |Du|^2)^{(n+3)/2}$. Incidentally, this $\psi$ satisfies Condition \ref{StructureConditions}, although we shall see that this is not necessary for this type of parabolic equation. It must also be commented at this point that in order to obtain a global gradient estimate for the above equation, we would need to modify it so that the right-hand side tends to zero as $x$ approaches $\Gamma^+$, for instance by multiplying $1/K$ with $\dist(x,\Gamma^+)$.
\end{example}


\subsection{Upper and Lower Estimates for $u_t$}\label{sec-Flowut}

\begin{proposition}
	Suppose $u\in C^2(\Omega_T)\cap C^1(\overline{\Omega_T})$ is a strictly convex solution of \eqref{TheFlowEquation} with $\psi = \psi(x,u,Du)$. Then there exist constants $C_T,C^T$ depending on $T,\vartheta,\gamma_0$ and $\phi$ such that
	\begin{equation}\label{utBound}
	C_T \leq \sup_{\overline{\Omega}\times[0,T]} |u_t| \leq C^T \eqdef \max\left\{ 1 \, , \, \sup_{t\in(0,T)}|\vartheta'(t)| \, , \, \sup_{t\in(0,T)} \frac{\phi_t(\cdot,t)}{\gamma_0} \right\} \, .
	\end{equation}
\end{proposition}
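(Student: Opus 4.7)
The plan is to introduce $w \eqdef -u_t$ (which is positive by strict convexity of $u$ and positivity of $\psi$) and derive a nonlinear parabolic equation for it. Taking the logarithm of $-u_t \det[D^2 u] = \psi^n(x, u, Du)$ and differentiating in $t$ via Jacobi's formula yields
\begin{equation*}
    w_t - w \, u^{ij} w_{ij} + \frac{n w \psi_{p_k}}{\psi} w_k + \frac{n w^2 \psi_z}{\psi} = 0,
\end{equation*}
where $(u^{ij})$ denotes the inverse of $D^2 u$. Since $w > 0$ and $D^2 u$ is positive definite, the operator $\partial_t - w u^{ij} \partial_{ij}$ is parabolic. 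The parabolic data for $w$ follow from the initial/boundary conditions on $u$: the compatibility $\det[D^2 u_0] = \psi^n(x, u_0, Du_0)$ gives $w(x,0) \equiv 1$; the Dirichlet condition $u = \vartheta(t)$ gives $w = -\vartheta'(t)$ on $\Gamma^+$; and differentiating $u_\nu = \gamma_0 u + \phi$ in $t$ gives the oblique condition $w_\nu - \gamma_0 w = -\phi_t$ on $\Gamma^-$.

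For the upper bound I apply the standard parabolic maximum principle. At a hypothetical interior maximum $(x_0, t_0)$ with $t_0 > 0$, the conditions $w_t \geq 0$, $Dw = 0$, $D^2 w \leq 0$, together with $w > 0$ and $\psi_z \geq 0$, make all four terms of the displayed equation non-negative, forcing them to vanish; the standard perturbation $\tilde w \eqdef w - \epsilon t$ rules out interior maxima for every $\epsilon > 0$. If the maximum instead occurs on $\Gamma^-$ at a positive time, the Hopf lemma applied to the oblique operator $\partial_\nu - \gamma_0$ gives $w_\nu \leq 0$ (recall $\nu$ is inward), so $\gamma_0 w \leq \phi_t$ at that point, i.e.\ $w \leq \phi_t / \gamma_0$. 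Comparing with the explicit values $1$ on $\{t=0\}$ and $-\vartheta'$ on $\Gamma^+$ yields $\sup w \leq C^T$.

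The lower bound is more delicate: because $\psi_z \geq 0$, the zeroth-order term $n w^2 \psi_z / \psi$ has the wrong sign for a naive minimum principle, so $w$ may a priori decay in the interior. The plan is to compare $w$ with the ODE barrier $\eta(t) = \eta_0 / (1 + C \eta_0 t)$ solving $\eta' = -C \eta^2$, where $C \eqdef n \sup_{\overline{\Omega_T}}(\psi_z / \psi) < \infty$ (finite by smoothness and strict positivity of $\psi$) and $\eta_0 \eqdef \min\bigl\{1, \inf_{[0,T]}(-\vartheta'), \inf_{\Gamma^- \times [0,T]}(\phi_t / \gamma_0)\bigr\} > 0$. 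If $(x_0, t_0)$ is the earliest time a touching $w(x_0, t_0) = \eta(t_0)$ occurs, then at an interior $x_0$ we have $w_t \leq \eta'(t_0)$ together with $Dw = 0$ and $D^2 w \geq 0$, while the equation yields $w_t \geq -C w^2 = \eta'(t_0)$; a standard perturbation $\eta_\epsilon' = -C \eta_\epsilon^2 - \epsilon$ produces a strict contradiction. On $\Gamma^+$ one simply has $-\vartheta' \geq \eta_0 > \eta(t_0)$, and on $\Gamma^-$ the Hopf lemma gives $w_\nu \geq 0$, hence $w \geq \phi_t / \gamma_0 \geq \eta_0 > \eta(t_0)$; both are contradictions. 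Setting $C_T \eqdef \eta(T) > 0$ completes the proof.

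The main obstacle is the lower bound: the positivity of $\psi_z$ is an enemy of minima, so the naive minimum principle fails and one must replace it by the nonlocal ODE comparison above. This is precisely why $C_T$ necessarily degrades as $T \to \infty$ in general, in contrast to the upper bound $C^T$, which (apart from the explicit dependence of $\vartheta$ and $\phi_t$ on $t$) is genuinely uniform in $T$.
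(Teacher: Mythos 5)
Your proof is correct, and the upper bound follows essentially the paper's route: the same logarithmic differentiation giving the parabolic equation for $w=-u_t$ with data $w=1$ at $t=0$, $w=-\vartheta'$ on $\Gamma^+$ and $w_\nu-\gamma_0 w=-\phi_t$ on $\Gamma^-$; where the paper simply invokes the comparison principle (Theorem 2.4 of \cite{Lieberman}), you reprove it by hand with the $\epsilon t$ perturbation and the first-order condition at a $\Gamma^-$ maximum, which is fine. The lower bound, however, is where you genuinely diverge. The paper argues qualitatively: assuming $u_t$ vanishes somewhere, the strong parabolic maximum principle pushes the point to the parabolic boundary, where the initial condition, the condition $u_t=\vartheta'<0$ on $\Gamma^+$, and the sign $u_{t\nu}=\gamma_0u_t+\phi_t=\phi_t>0$ at a $\Gamma^-$ maximum each give a contradiction; compactness then yields some $C_T>0$. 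You instead run a first-touching comparison with the Riccati barrier $\eta'=-C\eta^2$, $C=n\sup(\psi_z/\psi)$ along the solution, which is a valid argument and has the advantage of producing an explicit rate $C_T=\eta_0/(1+C\eta_0 T)$ rather than a purely existential constant. Two remarks: your assertion that the minimum principle ``fails'' is slightly overstated --- the paper's strong-maximum-principle argument applied to $u_t$ (whose maximum would be $0$) does deliver non-vanishing without any barrier, just not quantitatively; and your constant $C$ is the supremum of $\psi_z/\psi$ evaluated at $(x,u,Du)$, so it depends on $\|u\|_{C^1(\overline{\Omega_T})}$, which goes beyond the dependence on $T,\vartheta,\gamma_0,\phi$ claimed in the statement. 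Since the paper's own compactness constant is likewise solution-dependent, this is not a gap relative to the paper, but if you want a data-only constant you should note that $\|u\|_{C^1}$ is itself controlled by the later $C^0$ and $C^1$ estimates, which only use the upper bound $C^T$, so no circularity arises.
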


\begin{proof}
	Taking the logarithmic derivative of \eqref{TheFlowEquation} gives us that $v = - u_t$ satisfies \[
	Lv = - \frac{\d v}{\d t} + v u^{ij}\frac{\d^2 v}{\d x_i \d x_j} - v \frac{n\psi_{p_i}}{\psi} \frac{\d v}{\d x_i} - \frac{n\psi_z}{\psi}v^2 = 0 \, .
	\] On the boundary, by the definition of $u_0$ and by differentiating the boundary conditions we have
	\begin{subequations}
		\begin{align}
		v = 1 & \quad \text{ on } \Omega\times\{0\} \, , \label{tBCBase} \\
		v = -\vartheta'(t) & \quad \text{ on } \Gamma^+\times(0,T) \, , \label{tBCG+} \\
		v_\nu - \gamma_0 v = - \phi_t(\cdot,t) & \quad \text{ on } \Gamma^-\times(0,T) \, . \label{tBCG-}
		\end{align}
	\end{subequations}
	Since $\psi_z , v \geq 0$, these boundary conditions and Theorem 2.4 of \cite{Lieberman} now imply $v \leq C^T$ as defined.
	
	For the lower estimate, assume for contradiction that there exists a point $(x_0,t_0) \in \overline{\Omega_T}$ such that $u_t(x_0,t_0) = 0$. Thanks to the upper estimate $C^T$ and $Lu_t = 0$, the strong parabolic maximum principle (Theorem 2.7 of \cite{Lieberman}) tells us that $(x_0,t_0) \in \d\Omega\times(0,T)\cup\Omega\times\{0\}$. From \eqref{tBCBase}, $t_0 \neq 0$. For $x_0 \in \Gamma^+$, \eqref{tBCG+} implies $u_t(x_0,\cdot) \leq \vartheta'(\cdot) < 0$; thus we are forced to conclude that $x_0 \in \Gamma^-$. This is a maximum point, so $\d_\nu u_t = u_{t\nu} \leq 0$. However, by \eqref{tBCG-}, $u_{t\nu}(x_0,t_0) = \phi_t(x_0,t_0) > 0$, which finally gives us our contradiction. Therefore there exists $C_T > 0$ such that $u_t \leq -C_T$, proving the estimate \eqref{utBound}.
\end{proof}


\subsection{The $C^0$ estimate}\label{sec-FlowC0}

\begin{proposition}\label{FlowC0}
	Suppose $u\in C^2(\Omega_T)\cap C^1(\overline{\Omega_T})$ is a strictly convex solution of \eqref{TheFlowEquation}. Then for all $(x,t)\in\overline{\Omega_T}$, $u$ is bounded with \[
	\sup_{\overline{\Omega_T}}|u| \leq C_0^T \eqdef TC^T + \sup_{\overline{\Omega}} |u_0| \, .
	\]
\end{proposition}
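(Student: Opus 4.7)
The plan is to integrate the pointwise bound on $u_t$ furnished by Theorem \ref{main-th-Flowut} in the time variable, starting from the known initial data $u_0$.

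First, I would fix an arbitrary $(x,t)\in\overline{\Omega_T}$ and apply the fundamental theorem of calculus along the time slice through $x$:
\[
u(x,t) = u_0(x) + \int_0^t u_t(x,s)\, \mathrm{d}s \, .
\]
This is legitimate because $u\in C^1(\overline{\Omega_T})$, so $u_t$ is continuous up to $t=0$ and the integral makes sense at every $x\in\overline{\Omega}$.

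Next, I would invoke the upper estimate $|u_t|\leq C^T$ from the preceding proposition to control the integrand uniformly. The triangle inequality then yields
\[
|u(x,t)| \leq |u_0(x)| + \int_0^t |u_t(x,s)|\, \mathrm{d}s \leq \sup_{\overline{\Omega}}|u_0| + TC^T = C_0^T \, ,
\]
and taking the supremum over $(x,t)\in\overline{\Omega_T}$ completes the argument.

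There is essentially no obstacle: the only substantive input is the a priori bound $|u_t|\leq C^T$, which is already available. The signs could alternatively be tracked to give a one-sided refinement (since the previous proposition actually shows $u_t\leq -C_T<0$, so $u$ is strictly decreasing in $t$ and $u(x,t)\leq u_0(x)\leq 0$ by the elliptic $C^0$ bound on $u_0$), but this is not needed for the stated estimate.
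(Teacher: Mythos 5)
Your argument is exactly the paper's proof: the paper simply says ``integrate \eqref{utBound} with respect to $t$,'' and your write-up carries this out explicitly via the fundamental theorem of calculus and the bound $|u_t|\leq C^T$. The proposal is correct and takes essentially the same approach.
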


\begin{proof}
	Integrate \eqref{utBound} with respect to $t$.
\end{proof}


\subsection{The $C^1$ estimate}\label{sec-FlowC1}

\begin{proposition}\label{GradientFlow}
	Suppose $u \in C^2(\Omega_T)\cap C^1(\overline{\Omega_T})$ is a strictly convex solution of \eqref{TheFlowEquation} with $\psi = \psi(x,u)$. Then
	\begin{equation}\label{GradientFlowBound}
	\sup_{\overline{\Omega}} |Du| \leq C_1^T \eqdef \max \left\{ \frac{C_0^T}{1 - \max_{\Gamma^-}|\rho|} , \left( \frac{\|\psi\|_\infty}{\lambda_\emph{min}} \right)^{\frac{n}{n+1}} \right\} \max_{\Gamma^+} |D \rho| \, ,
	\end{equation} where $\rho$ is a strictly convex defining function for $\Gamma^+$ such that both $\rho(x) = 0$ and $0 < |D \rho(x)| \leq 1$ on $\Gamma^+$, $\max_{\Gamma^-}|\rho| < 1$, $\lambda_\emph{min}$ is the minimum eigenvalue of $D^2 \rho$, and $C_0^T$ is defined in Proposition \ref{FlowC0}.
\end{proposition}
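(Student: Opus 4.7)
The plan is to adapt the comparison argument of Proposition \ref{Gradient} to the parabolic setting, the key refinement being the inclusion of a time-dependent correction in the barrier. Strict convexity of $u(\cdot,t)$ in $x$ at each fixed $t$ localizes the spatial supremum of $|Du(\cdot,t)|$ to the outer boundary $\Gamma^+$, where $u\equiv\vartheta(t)$ makes the tangential derivatives vanish and $|Du|=-u_{\normal}$. It therefore suffices to bound $-u_{\normal}$ uniformly on $\Gamma^+\times[0,T]$.

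Introduce the barrier $v=u-K\rho+Kt$ on $\overline{\Omega_T}$, with $K>0$ to be determined. At any interior minimum $(x_0,t_0)\in\Omega\times(0,T]$ of $v$, the temporal inequality $v_t(x_0,t_0)\leq 0$ gives $u_t\leq-K$, so $|u_t|\geq K$; meanwhile $D^2 v\geq 0$ forces $D^2 u\geq K D^2\rho\geq K\lambda_{\min} I$, and hence $\det D^2 u\geq K^n\lambda_{\min}^n$. Substituting both bounds into the governing equation $|u_t|\det D^2 u=\psi^n$ yields
\[
K^{n+1}\lambda_{\min}^n \leq |u_t|\det D^2 u = \psi^n \leq \|\psi\|_\infty^n,
\]
which forces $K\leq(\|\psi\|_\infty/\lambda_{\min})^{n/(n+1)}$. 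Picking $K$ strictly above this threshold therefore rules out an interior minimum. The $Kt$ correction is exactly what converts the purely spatial comparison into a parabolic one by supplying the lower bound on $|u_t|$ at the interior minimum, and this is the source of the $n/(n+1)$ exponent rather than the elliptic exponent $1$.

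Next, the minimum of $v$ is located on $\Gamma^+\times[0,T]$ rather than on the remaining components of the parabolic boundary. On $\Gamma^-\times[0,T]$, the $C^0$ bound together with the normalization $\max_{\Gamma^-}|\rho|<1$ give a lower bound for $v$ growing linearly in $K$ with coefficient $1-\max_{\Gamma^-}|\rho|$, while on $\Gamma^+\times[0,T]$, $v=\vartheta(t)+Kt\leq Kt$ by $\vartheta\leq 0$. Requiring $K>C_0^T/(1-\max_{\Gamma^-}|\rho|)$ then ensures $v|_{\Gamma^-}$ exceeds the infimum of $v|_{\Gamma^+}$. At the initial time, $v|_{t=0}=u_0-K\rho$ is the classical elliptic barrier, and Proposition \ref{Gradient} applied to $u_0$, which solves the corresponding elliptic Monge-Amp\`ere equation, locates its spatial minimum on $\Gamma^+$ for the same or smaller choices of $K$. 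Consequently the overall minimum of $v$ over $\overline{\Omega_T}$ is attained on $\Gamma^+\times[0,T]$.

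At such a minimum $(x^*,t^*)$, $v$ is constant in $x$ on the slice $\Gamma^+\times\{t^*\}$ (equal to $\vartheta(t^*)+Kt^*$ there), so every spatial point of this slice is also a minimum and the Hopf-type inequality $v_{\normal}\geq 0$ holds at every $x\in\Gamma^+$ at time $t^*$. This translates to $|Du(x,t^*)|=-u_{\normal}(x,t^*)\leq K|D\rho(x)|$ uniformly in $x\in\Gamma^+$. Applying the same argument on each sub-cylinder $\overline{\Omega}\times[0,t_1]$ for $t_1\in(0,T]$ extends this bound to every time slice, so \eqref{GradientFlowBound} follows after assembling the two thresholds on $K$. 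The principal obstacle is this final transfer from the single time $t^*$ to all times in $[0,T]$; it is handled by localization in $t$, exploiting that both the elliptic estimate for the initial data and the interior exclusion are stable under restriction to any sub-cylinder.
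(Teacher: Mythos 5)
Your skeleton is the same as the paper's: a barrier of the form $u-K\rho+(\text{increasing function of }t)$, exclusion of interior space--time minima through the equation, comparison on the parabolic boundary, and a Hopf-type inequality on $\Gamma^+$. Your interior step is fine and in fact a little cleaner than the paper's (you use $D^2u\geq K\,D^2\rho$ and monotonicity of the determinant instead of multiplying by $u^{ij}$ and using the trace bound), and it produces the same threshold $(\|\psi\|_\infty/\lambda_{\min})^{n/(n+1)}$.

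The genuine gap is exactly the step you single out as the ``principal obstacle'': transferring the Hopf inequality from the minimizing time $t^*$ to all times, and the sub-cylinder device does not accomplish this. Running the argument on $\overline{\Omega}\times[0,t_1]$ only tells you that the minimum of $v$ over that sub-cylinder is attained somewhere on $\Gamma^+\times[0,t_1]$, say at time $t^*(t_1)$, and the Hopf inequality you extract holds at time $t^*(t_1)$, not at $t_1$. Since on $\Gamma^+$ your barrier equals $m(t)=\vartheta(t)+Kt$, you would need $m$ to attain its minimum over $[0,t_1]$ at $t_1$, i.e.\ $m$ nonincreasing; but $K$ must be chosen large (above both thresholds), so typically $K>\sup_{[0,T]}|\vartheta'|$ and $m$ is strictly increasing, in which case every sub-cylinder minimum sits at $t^*=0$ and your argument yields the gradient bound on $\Gamma^+$ only at $t=0$. (The paper's own write-up is terse at the same point; a genuine repair is to compare with the time-frozen boundary value, e.g.\ show $u-\vartheta(t)-K\rho\geq0$, using $u_t\leq\vartheta'(t)<0$ at an interior minimum --- which costs a constant depending on $\inf|\vartheta'|$ --- or some other argument that produces the inequality $v(\cdot,t)\geq v|_{\Gamma^+}(t)$ slice by slice.) Two smaller points: replacing the paper's $Ke^t$ by $Kt$ spoils the bookkeeping on $\Gamma^-$ and at $t=0$, since at $t=0$ your barrier gives only $v\geq -C_0^T+K\min_{\Gamma^-}|\rho|$ on $\Gamma^-$ (not a coefficient $1-\max_{\Gamma^-}|\rho|$), and invoking Proposition \ref{Gradient} for the slice $t=0$ requires $K>\|\psi\|_\infty/\lambda_{\min}$, which is in general larger than $(\|\psi\|_\infty/\lambda_{\min})^{n/(n+1)}$, so the constant $C_1^T$ as stated would not be recovered; these could be absorbed by enlarging $K$, but they are not ``the same or smaller choices of $K$'' as you claim.
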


\begin{proof}
	Since $u$ is convex, the maximum of $|Du|$ occurs on the outer boundary $\Gamma^+$. From the Dirichlet condition, the tangential derivatives of $u$ are zero, and so $|Du| = -u_\normal$, where $\normal$ is the inward-pointing unit normal field on $\Gamma^+$.
	
	Let $\rho$ be a strictly convex defining function for $\Gamma^+$ such that both $\rho(x) = 0$ and $0 < |D \rho(x)| \leq 1$ on $\Gamma^+$. Consider the function \[
	v = u - K(\rho - e^t)
	\] on $\Omega$ for some constant $K$ to be chosen later. Recalling that $u$ and $\rho$ are both negative on $\Omega$ and equal on $\Gamma^+$, we claim that for large enough $K$, $v$ cannot have a local minimum inside $\Omega$. Indeed, if $v$ has a minimum at $x_0 \in \Omega$, then at $x_0$, \[
	0 = v_t = u_t + Ke^t \, .
	\] Applying this to obtain \[
	\sum_{i=1}^{n} u^{ii} \geq \frac{n}{\det[D^2u]^{1/n}} = \frac{n (-u_t)^{1/n}}{\|\psi\|_\infty} = \frac{n}{\|\psi\|_\infty}\left( \frac{K(-u_t)}{2} \right)^{1/n} \, ,
	\] we have \[
	0 \leq u^{ij}v_{ij} = u^{ij}u_{ij} - Ku^{ij}\rho_{ij} \leq n - K\lambda_\text{min} \sum u^{ii} \leq n - K\lambda_\text{min} \frac{n(Ke^t)^{1/n}}{\|\psi\|_\infty} < 0 \, ,
	\] provided we choose \[
	K > \left( \frac{\|\psi\|_\infty}{\lambda_\text{min}} \right)^{\frac{n}{n+1}} \, .
	\] Here $\lambda_\text{min} > 0$ is the minimal eigenvalue of $D^2 \rho$, which is strictly positive by strict convexity. If additionally we impose \[
	K > \frac{C_0^T}{1-\max_{\Gamma^-}|\rho|} \, ,
	\] then $v > 0$ on $\Gamma^-$. Thus the minimum occurs on $\Gamma^+$. Therefore we have $v_{\normal} \geq 0$, where $\normal$ is the inward-pointing unit normal field on $\Gamma^+$, and so \[
	|D u| = -u_\normal \leq -K\rho_\normal = K|D\rho| \, ,
	\] from which the bound \eqref{GradientFlowBound} follows.
\end{proof}

\begin{remark}
	When $\psi = \psi(x,u,Du)$, we must impose Structure condition \ref{StructureConditionFlow} on $\psi$. The global gradient estimate then follows from the argument prior to Theorem 15.25 in Chapter 15 of \cite{Lieberman}.
\end{remark}


\subsection{The $C^2$ estimate}\label{sec-FlowC2}

\begin{proposition}\label{FlowC2}
	Suppose $u \in C^4(\Omega_T)\cap C^3(\overline{\Omega_T})$ is a strictly convex solution of \eqref{TheFlowEquation}. Then there exists a constant $C_2^T = C_2^T(n,\Omega,\Gamma^\pm,\psi,\vartheta,\phi)$ such that \[
	\sup_{\overline{\Omega_T}}|D^2 u| \leq C_2^T \, ,
	\] provided that there exists a subsolution satisfying Condition \ref{FlowSubsolutionCondition} and either $\psi = \psi(x,u)$ and \[
	2\max_{\substack{i = 1,...,n-1 \\ \bar{x} \in\Gamma^-}} \kappa_i(\bar{x}) < \gamma_0 + \max \left\{ 0 , \min_{\Gamma^-}\frac{\gamma_0 u + \phi}{M-u} \right\} \, ,
	\] where $\kappa_i(\bar{x})$ is the $i^{\text{th}}$ principal curvature of $\Gamma^-$ at $\bar{x}$, and $M = M(n,\Omega,\Gamma^\pm,\psi,\phi)$ is a constant, or $\psi = \psi(x,u,Du)$ and \[
	2\max_{\substack{i = 1,...,n-1 \\ \bar{x} \in\Gamma^-}} \kappa_i(\bar{x}) + \tilde{C} < \gamma_0 + \max \left\{ 0 , \min_{\Gamma^-}\frac{\gamma_0 u + \phi}{\tilde{M} + (1-N^4)u} \right\} \, ,
	\] where $\kappa_i(\bar{x})$ is the $i^{\text{th}}$ principal curvature of $\Gamma^-$ at $\bar{x}$, $N = N(n,\Omega,\psi,\phi)$, $\tilde{C} = \tilde{C}(n,\Omega,\Gamma^\pm,\psi,\phi,N)$ and $\tilde{M} = \tilde{M}(n,\Omega,\Gamma^\pm,\psi,\phi)$ are constants.
\end{proposition}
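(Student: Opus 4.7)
The plan is to adapt the elliptic boundary estimates of Propositions \ref{th-C2}, \ref{th-C2Du} and \ref{Double Normal Estimate} to the parabolic setting, together with a parabolic maximum principle reduction for interior estimates, and then invoke known results for the parabolic Dirichlet problem on $\Gamma^+$. The two-sided bound $C_T \leq -u_t \leq C^T$ from Theorem \ref{main-th-Flowut} will play throughout the role that was played by $\psi$ being bounded above and below in the elliptic setting.

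First I would reduce interior $C^2$ estimates to boundary estimates. Differentiating the logarithmic form $\log(-u_t) + \log\det[D^2u] = n\log\psi(x,u,Du)$ twice in a fixed direction $\xi$ yields a parabolic identity for $u_{\xi\xi}$ involving the linearised operator of the form $\mathcal{L}v = -v_t + (-u_t) u^{ij} v_{ij} + \text{lower order}$, whose principal part is uniformly parabolic thanks to Proposition \ref{main-th-Flowut} and the strict convexity of $u$. Applying the parabolic maximum principle to $u_{\xi\xi} + Ku$ for sufficiently large $K$, and using the $C^0$, $C^1$ and $u_t$ estimates from Propositions \ref{FlowC0}, \ref{GradientFlow} and Theorem \ref{main-th-Flowut}, reduces the global bound to one on the parabolic boundary $\Omega\times\{0\}\cup\d\Omega\times(0,T)$. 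On $\Omega\times\{0\}$ the bound is immediate from the assumed smoothness of $u_0$; on $\Gamma^+\times(0,T)$ the estimate follows from parabolic analogues of the Caffarelli--Nirenberg--Spruck boundary estimates for the Dirichlet problem, as in Chapter 15 of \cite{Lieberman}.

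For estimates on $\Gamma^-\times(0,T)$, the mixed tangential-normal derivatives are bounded by differentiating the Neumann condition tangentially, exactly as in \eqref{uxinu}, giving $|u_{\xi\nu}| \leq \gamma_0 C_1^T + \sup_{\Gamma^-\times(0,T)}|D\phi| \eqdef C_3^T$. For the tangential second derivatives I would apply, at each fixed time $t$, the auxiliary function $w = g(u)u_{\xi\xi} + a_k u_k + b + M|x|^2$ (when $\psi=\psi(x,u)$) or its weighted variant $\tilde w = e^{N|Du|^2}w$ (when $\psi=\psi(x,u,Du)$), exactly as in the proofs of Propositions \ref{th-C2} and \ref{th-C2Du}. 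The calculation is essentially unchanged, but the operator $F^{ij}D_{ij}$ is replaced by $\mathcal{L}$; the new term $-\partial_t w$ produces a contribution involving $u_{t\xi\xi}$, which is controlled because $u_t$ satisfies a linear parabolic equation with known boundary data (see the proof of Theorem \ref{main-th-Flowut}), and in particular the condition $\phi_t > 0$ on $\Gamma^-$ makes this contribution favourable near a boundary maximum. The curvature hypotheses of Proposition \ref{FlowC2} are then used, in the same way as in the elliptic case, to close the estimate at boundary maxima of $w$.

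Finally, the double normal estimate on $\Gamma^-$ follows by repeating the argument of Proposition \ref{Double Normal Estimate} at each fixed time $t\in[0,T]$. For each $x_0\in\Gamma^-$, we take the geodesic $\gamma(s)$ satisfying (i)--(iii), form $U(s,t) = (u_\nu - \underline u_\nu)(\gamma(s),t)$, and use the Frenet-Serret identity along $\gamma$ to obtain $u_{\xi\xi} \geq \tau$ from the differential inequality in Condition \ref{FlowSubsolutionCondition}. Since the parabolic maximum principle (Theorem 14.1 of \cite{Lieberman}) ensures $\underline u \leq u$ on $\overline{\Omega_T}$, the argument is purely boundary-geometric and requires no change from the elliptic case. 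Combining this lower bound on tangential second derivatives with the bound on $|u_{\xi\nu}|$ and the structure of the equation $-u_t\det[D^2u]=\psi^n$ (with $-u_t \geq C_T > 0$) then gives a uniform upper bound on $u_{\nu\nu}$, and hence on $|D^2u|$ globally.

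The main obstacle I anticipate is controlling the time-derivative contribution $-w_t$ (respectively $-\tilde w_t$) in the parabolic maximum principle computation: expanding gives a term $g(u)u_{t\xi\xi}$ that is of the same order as the leading $u_{\xi\xi}$ term being estimated. Handling this requires differentiating the Neumann condition twice tangentially along $\gamma$ and using the sign conditions $\vartheta'<0$, $\phi_t>0$, $\delta_0 > 0$ built into Condition \ref{FlowSubsolutionCondition} to ensure that the parabolic correction has the right sign near a boundary maximum of the auxiliary function, so that the curvature conditions stated in Proposition \ref{FlowC2} suffice, exactly as in the elliptic analogues.
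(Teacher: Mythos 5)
Your overall architecture (reduction of the interior estimate to the parabolic boundary, mixed derivatives on $\Gamma^-$ from the Neumann condition, the double normal bound via Condition \ref{FlowSubsolutionCondition} together with $|u_t|\geq C_T$, and the Dirichlet-side estimates from Chapter 15 of \cite{Lieberman}) matches the paper. However, there is a genuine gap at exactly the step you flag yourself: the term $g\,u_{t\xi\xi}$ in the tangential second derivative estimate on $\Gamma^-$, and your proposed fixes do not work. Knowing that $u_t$ solves a linear uniformly parabolic equation with controlled boundary data bounds $u_t$ itself (that is Theorem \ref{main-th-Flowut}), but it gives no control on the spatial second derivatives $u_{t\xi\xi}$, which enter the computation at the same order as the quantity $u_{\xi\xi}$ being estimated. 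Moreover, this term does not arise only ``near a boundary maximum'': it appears in the interior computation of $F^{ij}w_{ij}$ through the twice-differentiated equation \eqref{FlowFijwijxixi}, where the sign hypotheses $\vartheta'<0$, $\phi_t>0$, $\delta_0>0$ on the boundary data are of no help. Finally, running the elliptic barrier argument ``at each fixed time'' forfeits exactly the information that resolves the difficulty, since a spatial maximum on a fixed time slice yields no first-order relation in $t$.

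The paper's device, which is missing from your proposal, is to modify the auxiliary function to $\hat w = g(u)u_{\xi\xi} + a_ku_k + b + M(|x|^2 - t)$ and maximise over the whole cylinder $\Omega\times(0,T)$. At an interior spacetime maximum the additional critical-point equation \eqref{FlowC2 wt}, namely $\hat w_t = g'u_tu_{\xi\xi} + gu_{t\xi\xi} + a_ku_{kt} + b_t - M = 0$, allows one to solve for $g\,u_{t\xi\xi}$ and substitute it into the contraction of \eqref{FlowC2 wij} with $F^{ij}$ after using \eqref{FlowFijwijxixi}; the resulting extra terms are bounded precisely because $1/|u_t|\leq 1/C_T$ by Theorem \ref{main-th-Flowut}, and the argument then proceeds as in Lemma \ref{Fijwij}. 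In the case $\psi=\psi(x,u,Du)$ one uses $\overline w = e^{N|Du|^2}\hat w$, and the relation $\overline w_t=0$, i.e. $2N\hat w u_ku_{kt}+\hat w_t=0$, cancels the term $2N\hat w u_ku_{kt}/u_t$ produced by $2NF^{ij}u_ku_{ijk}$, after which the computation reduces to that of Proposition \ref{th-C2Du}. Without this mechanism (or an equivalent way of eliminating $u_{t\xi\xi}$), your tangential estimate on $\Gamma^-$ does not close, so the proof as proposed is incomplete.
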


\begin{proof}
	As with the $C^2$ estimates in previous sections, the full $C^2$ estimate reduces to boundary estimates. This follows from the argument preceding Theorem 15.22 in \cite{Lieberman}. \\
	
	We first assume $\psi = \psi(x,u)$. Consider the generalised auxiliary function \[
	\hat{w} = w - Mt = g(u)u_{\xi\xi} + a_k u_k + b + M(|x|^2 - t) \, ,
	\] where $\xi$ is an arbitrary direction, $g(u)$ is a nonnegative function to be chosen, $M$ is a constant also to be chosen, and $a_k$ and $b$ are as in \cite{LTU}:
	\begin{gather*}
	a_k = 2(\xi\cdot \nu)(\chi_u\xi_k'-\xi_k'D_i\nu_k) \, , \\
	b = 2(\xi\cdot \nu)\xi_k'\chi_{x_k} \, ,
	\end{gather*}
	$\xi'=\xi-(\xi\cdot\nu)\nu$ with $\nu$ a $C^{2,1}(\Omega)$ extension of the inner unit normal field on $\Gamma^-$.
	
	Suppose for contradiction that $\hat{w}$ has a local maximum at $(x_0,t_0) \in \Omega\times(0,T)$. Then at $x_0$ we have
	\begin{align}
	& \hat{w}_t = g' u_t u_{\xi\xi} + g u_{t\xi\xi} + a_k u_{kt} + b_t - M = 0 \, , \label{FlowC2 wt} \\
	& \hat{w}_i = g' u_i u_{\xi\xi} + g u_{i\xi\xi} + a_{k,i}u_k + a_ku_{ki} + b_i + 2Mx_i = 0 \, , \label{FlowC2 wi} \\
	\begin{split}\label{FlowC2 wij}
	& \hat{w}_{ij} = g'' u_i u_j u_{\xi\xi} + g' u_{ij}u_{\xi\xi} + g' u_i u_{j\xi\xi} + g' u_j u_{i\xi\xi} + g u_{ij\xi\xi} \\
	& \qquad \ \ + a_{k,ij}u_k + a_{k,i}u_{kj} + a_{k,j}u_{ki} + a_k u_{kij} + b_{ij} + 2M\delta_{ij} \leq 0 \, .
	\end{split}
	\end{align}
	
	Applying Jacobi's formula to \eqref{TheFlowEquation}, we also have for $F^{ij} = u^{ij}$,
	\begin{gather}
	\frac{u_{t\xi}}{u_t} + F^{ij}u_{ij\xi} = \ln(\psi^n)_\xi \, , \label{FlowWhy} \\
	\frac{u_{t\xi\xi}}{u_t} - \left( \frac{u_{t\xi}}{u_t} \right)^2 + F^{ij}u_{ij\xi\xi} = \ln(\psi^n)_{\xi\xi} - F^{ij}_\xi u_{ij\xi} = \ln(\psi^n)_{\xi\xi} + F^{ik}F^{jl}u_{ij\xi}u_{kl\xi} \, . \label{FlowFijwijxixi}
	\end{gather}
	From $b$, define $\overline{b_k} = 2(\xi\cdot \nu)\xi_k'$ as before. With the above notation, we have from \eqref{FlowWhy}
	\begin{gather*}
	\frac{b_t}{u_t} + F^{ij}b_{ij} = \frac{\gamma_0 \overline{b_k} (u_{kt} + \phi_{kt})}{u_t} + F^{ij}\Bigl[ \overline{b_k}_{,ij}(\gamma_0u_k + \phi_k) + 2\overline{b_k}_{,i} \phi_{jk} + \overline{b_k}\phi_{ijk} \Bigr] + 2\gamma_0 \overline{b_k}_{,k} + \gamma_0 \overline{b_k} F^{ij}u_{ijk} \\
	= \frac{\gamma_0 \overline{b_k} \phi_{kt}}{u_t} + F^{ij}\Bigl[ \overline{b_k}_{,ij}(\gamma_0u_k + \phi_k) + 2\overline{b_k}_{,i} \phi_{jk} + \overline{b_k}\phi_{ijk} \Bigr] + 2\gamma_0 \overline{b_k}_{,k} + \gamma_0 \overline{b_k} \ln(\psi^n)_k \, ,
	\end{gather*}
	and therefore by \eqref{FlowC2 wi} and \eqref{FlowC2 wij},
	\begin{gather*}
	F^{ij}\hat{w}_{ij} = \left[ ng' + \left( g'' - \frac{2(g')^2}{g} \right)F^{ij}u_i u_j \right]u_{\xi\xi} - \frac{2g'}{g}F^{ij}u_i \left( u_k a_{k,j} + b_j + 2Mx_j \right) - \frac{2g'}{g} a_k u_k \\
	+ g\left[ \ln(\psi^n)_{\xi\xi} + F^{ij}u_{jk\xi}F^{kl}u_{li\xi} + \left( \frac{u_{t\xi}}{u_t} \right)^2 \right] + \frac{1}{u_t}\left( g'u_t u_{\xi\xi} + a_k u_{kt} + b_t - M \right) + F^{ij}a_{k,ij}u_k \\
	+ \sum a_{k,k} + a_k\left( \ln(\psi^n)_k - \frac{u_{kt}}{u_t} \right) + F^{ij}b_{ij} + 2M\mathcal{T} \\
	= \left[ (n+1)g' + \left( g'' - \frac{2(g')^2}{g} \right)F^{ij}u_i u_j \right]u_{\xi\xi} - \frac{2g'}{g}F^{ij}u_i \left( u_k a_{k,j} + b_j + 2Mx_j \right) - \frac{2g'}{g} a_k u_k \\
	+ g\left[ \ln(\psi^n)_{\xi\xi} + F^{ij}u_{jk\xi}F^{kl}u_{li\xi} + \left( \frac{u_{t\xi}}{u_t} \right)^2 \right] + F^{ij}\Bigl[ a_{k,ij}u_k + \overline{b_k}_{,ij}(\gamma_0 u_k + \phi_k) + 2\overline{b_k}_{,i} \phi_{kj} + \overline{b_k}\phi_{ijk} \Bigr] \\ + (a_k + \overline{b_k} \gamma_0) \ln(\psi^n)_k - \frac{1}{u_t} + 2\sum (a_{k,k} + \gamma_0 \overline{b_k}_{,k}) + 2M\mathcal{T} + \frac{\gamma_0 \overline{b_k} \phi_{kt}}{u_t} \, .
	\end{gather*}
	Here we used \eqref{FlowC2 wt} to remove the $u_{t\xi\xi}$ term arising from \eqref{FlowFijwijxixi}. We also deal with the terms arising from $\ln(\psi^n)_{\xi\xi}$ in the same way as in Remark \ref{psi(x,u)}. We can now proceed exactly as we did from Lemma \ref{Fijwij} to obtain the result of Proposition \ref{FlowC2}, since all the extra terms are positive or bounded. In particular, by Proposition \ref{utBound}, $1/|u_t| \leq 1/C_T < \infty$. \\
	
	To obtain the tangential $C^2$ estimate when $\psi = \psi(x,u,Du)$, define $\overline{w} = e^{N|Du|^2}\hat{w}$, with $\hat{w}$ as in the previous subsection. This time, at a critical point, \[
	\frac{\overline{w}_t}{e^{N|Du|^2}} = 0 = 2N\hat{w}u_ku_{kt} + \hat{w}_t \, ,
	\] and therefore the extra term $2N\hat{w}u_ku_{kt}/u_t$ from $2NF^{ij}u_{ijk}u_k$ is cancelled when this is substituted into $F^{ij}\overline{w}_{ij}$. Thus the calculation runs exactly the same result as Proposition \ref{th-C2Du}, producing a bound on $|D^2 u|$ under the subsolution condition \ref{FlowSubsolutionCondition}. Note that by the parabolic maximum principle, any subsolution satisfying \eqref{TheFlowEquationSubsolution} remains a subsolution over the course of the flow. \\
	
	As in Section \ref{sec-existence}, the mixed tangential-normal derivative bounds on $\Gamma^-$ follow directly from the Neumann condition and the $C^1$ estimate in the same way as in \eqref{uxinu}.
	
	For the double normal estimate on $\Gamma^-$, we make use of \eqref{utBound}, which tells us that $u_t$ is bounded away from $0$. Therefore we may bound $u_{\nu\nu}$ above using the same method as that of Proposition \ref{Double Normal Estimate}, assuming the existence of a subsolution satisfying the extra conditions at all times $t\in[0,T]$.
	
	Finally, the estimates for $D^2 u$ on the outer boundary $\Gamma^+$ follow from the methods in Chapter 15 of \cite{Lieberman}, since our subsolution in Condition \ref{FlowSubsolutionCondition} is assumed to be strict. This completes the proof of the full $C^2$ estimate in Proposition \ref{FlowC2}.
\end{proof}

We can now employ the parabolic Krylov-Safonov $C^{2,\alpha}$ estimates of \cite{Lieberman} and the method of continuity to obtain a $C^{2,\alpha}$ solution of \eqref{TheFlowEquation}.

\begin{bibdiv}
	\begin{biblist}
		\bib{AK}{article}{
			author={Aleksanyan, H,},
			author={Karakhanyan, A,},
			title={$K$ surfaces with free boundaries},
			journal={arXiv:1705.04842},
			volume={},
			date={2017},
			number={},
			pages={},
		}
		
		\bib{CNS-1}{article}{
			author={Caffarelli, L.},
			author={Nirenberg, L.},
			author={Spruck, J.},
			title={The Dirichlet problem for nonlinear second-order elliptic
				equations. I. Monge-Amp\`ere equation},
			journal={Comm. Pure Appl. Math.},
			volume={37},
			date={1984},
			number={3},
			pages={369--402},
			issn={0010-3640},
			review={\MR{739925}},
			doi={10.1002/cpa.3160370306},
		}
		
		\bib{GilbargTrudinger}{book}{
			author={Gilbarg, David},
			author={Trudinger, Neil S.},
			title={Elliptic partial differential equations of second order},
			series={Classics in Mathematics},
			note={Reprint of the 1998 edition},
			publisher={Springer-Verlag, Berlin},
			date={2001},
			pages={xiv+517},
			isbn={3-540-41160-7},
			review={\MR{1814364}},
		}
		
		\bib{Guan1998}{article}{
			author={Guan, Bo},
			title={The Dirichlet problem for Monge-Amp\`ere equations in non-convex domains and spacelike hypersurfaces of constant Gauss curvature},
			journal={Trans. Amer. Math. Soc.},
			volume={350},
			date={1998},
			number={12},
			pages={4955--4971},
			issn={0002-9947},
			review={\MR{1451602}},
			doi={10.1090/S0002-9947-98-02079-0},
		}
		
		\bib{GuanSpruck}{article}{
			author={Guan, B.},
			author={Spruck, J.},
			title={Boundary-value problems on $\S^n$ for surfaces of constant Gauss curvature},
			journal={Ann. of Math. (2)},
			volume={138},
			date={1993},
			number={3},
			pages={601--624},
			issn={0003-486X},
			review={\MR{1247995}},
			doi={10.2307/2946558},
		}
		
		\bib{IvoLady2}{article}{
			author={Ivochkina, N.},
			author={Ladyzhenskaya, O.},
			title={On Parabolic Problems Generated by some Symmetric Functions of the Eigenvalues of the Hessian},
			journal={Topol. Methods Nonlinear Anal.},
			volume={4},
			date={1994},
			number={1},
			pages={19--29},
			issn={1230-3429},
			review={\MR{1321807}},
			doi={10.12775/TMNA.1994.021},
		}
		
		\bib{IvoLady1}{article}{
			author={Ivochkina, N. M.},
			author={Ladyzhenskaya, O. A.},
			title={Parabolic Equations Generated by Symmetric Functions of the Eigenvalues of the Hessian or by the Principal Curvatures of a Surface. I. Parabolic Monge-Amp\`ere Equations},
			language={Russian, with Russian summary},
			journal={Algebra i Analiz},
			volume={6},
			date={1994},
			number={3},
			pages={141--160},
			issn={0234-0852},
			translation={
				journal={St. Petersburg Math. J.},
				volume={6},
				date={1995},
				number={3},
				pages={575--594},
				issn={1061-0022},
			},
			review={\MR{1301835}},
		}
		
		\bib{Lieberman}{book}{
			author={Lieberman, Gary M.},
			title={Second Order Parabolic Differential Equations},
			publisher={World Scientific, Singapore},
			date={1996},
			pages={xii+439},
			isbn={981-02-2883-X},
			review={\MR{1465184}},
		}
		
		\bib{HolderEstimates}{article}{
			author={Lieberman, Gary M.},
			author={Trudinger, Neil S.},
			title={Nonlinear oblique boundary value problems for nonlinear elliptic equations},
			journal={Trans. Amer. Math. Soc.},
			volume={295},
			date={1986},
			number={2},
			pages={509--546},
			issn={0002-9947},
			review={\MR{833695}},
			doi={10.2307/2000050},
		}
		
		\bib{LTHJBE}{article}{
			author={Lions, P.-L.},
			author={Trudinger, N. S.},
			title={Linear oblique derivative problems for the uniformly elliptic Hamilton-Jacobi-Bellman equation},
			journal={Math. Zeit.},
			volume={191},
			date={1986},
			number={1},
			pages={1--15},
			issn={1432-1823},
			review={\MR{0812598}},
			doi={10.1007/BF01163605},
		}
		
		\bib{LTU}{article}{
			author={Lions, P.-L.},
			author={Trudinger, N. S.},
			author={Urbas, J. I. E.},
			title={The Neumann problem for equations of Monge-Amp\`ere type},
			journal={Comm. Pure Appl. Math.},
			volume={39},
			date={1986},
			number={4},
			pages={539--563},
			issn={0010-3640},
			review={\MR{840340}},
			doi={10.1002/cpa.3160390405},
		}
		
		\bib{Ma-19}{article}{
			author={Ma, Xinan},
			author={Qiu, Guohuan},
			title={The Neumann problem for Hessian equations},
			journal={Comm. Math. Phys.},
			volume={366},
			date={2019},
			number={1},
			pages={1--28},
			issn={0010-3616},
			review={\MR{3919441}},
			doi={10.1007/s00220-019-03339-1},
		}
		
		\bib{TUDirichletPGC}{article}{
			author={Trudinger, Neil S.},
			author={Urbas, John},
			title={The Dirichlet Problem for the Equation of Prescribed Gauss Curvature},
			journal={Bull. Austral. Math. Soc.},
			volume={28},
			date={1983},
			number={2},
			pages={217--231},
			issn={1755-1633},
			review={\MR{0729009}},
			doi={10.1017/S000497270002089X},
		}
		
		\bib{TW-Scuola}{article}{
			author={Trudinger, Neil S.},
			author={Wang, Xu-Jia},
			title={On the second boundary value problem for Monge-Amp\`ere type
				equations and optimal transportation},
			journal={Ann. Sc. Norm. Super. Pisa Cl. Sci. (5)},
			volume={8},
			date={2009},
			number={1},
			pages={143--174},
			issn={0391-173X},
			review={\MR{2512204}},
		}
		
		\bib{Tso}{article}{
			author={Tso, Kaising},
			title={Deforming a Hypersurface by its Gauss-Kronecker Curvature},
			journal={Communications on Pure and Applied Mathematics},
			volume={XXXVIII},
			date={1985},
			number={6},
			pages={867--882},
			issn={1097-0312},
			review={\MR{0812353}},
		}
		
		\bib{UrbasWithoutBC}{article}{
			author={Urbas, John I. E.},
			title={The Equation of Prescribed Gauss Curvature Without Boundary Conditions},
			journal={J. Differential Geom.},
			volume={20},
			date={1984},
			number={2},
			pages={311--327},
			issn={0022-040X},
			review={\MR{788283}},
			doi={10.4310/jdg/1214439282},
		}
		
		\bib{Urbas-95}{article}{
			author={Urbas, John},
			title={Nonlinear oblique boundary value problems for Hessian equations in two dimensions},
			journal={Ann. Inst. H. Poincar\'{e} Anal. Non Lin\'{e}aire},
			volume={12},
			date={1995},
			number={5},
			pages={507--575},
			issn={0294-1449},
			review={\MR{1353259}},
			doi={10.1016/S0294-1449(16)30150-0},
		}
		
		\bib{Urbas-98}{article}{
			author={Urbas, John},
			title={Oblique boundary value problems for equations of Monge-Amp\`ere
				type},
			journal={Calc. Var. Partial Differential Equations},
			volume={7},
			date={1998},
			number={1},
			pages={19--39},
			issn={0944-2669},
			review={\MR{1624426}},
			doi={10.1007/s005260050097},
		}
	\end{biblist}
\end{bibdiv}

\end{document}